\def\@tocline#1#2#3#4#5#6#7{\relax
  \ifnum #1>\c@tocdepth 
  \else
    \par \addpenalty\@secpenalty\addvspace{#2}%
    \begingroup \hyphenpenalty\@M
    \@ifempty{#4}{%
      \@tempdima\csname r@tocindent\number#1\endcsname\relax
    }{%
      \@tempdima#4\relax
    }%
    \parindent\z@ \leftskip#3\relax \advance\leftskip\@tempdima\relax
    \rightskip\@pnumwidth plus4em \parfillskip-\@pnumwidth
    #5\leavevmode\hskip-\@tempdima
      \ifcase #1
       \or\or \hskip 1em \or \hskip 2em \else \hskip 3em \fi%
      #6\nobreak\relax
    \dotfill\hbox to\@pnumwidth{\@tocpagenum{#7}}\par
    \nobreak
    \endgroup
  \fi}
\newtheorem{thm}{Theorem}[section]
\newtheorem{theorem}[thm]{Theorem} \newtheorem{proposition}[thm]{Proposition} 
\newtheorem{lemma}[thm]{Lemma}
\newtheorem{corollary}[thm]{Corollary}
\newtheorem{prop}[thm]{Proposition}
\newtheorem{thmx}{Theorem}
\theoremstyle{definition}
\newtheorem{defn}[thm]{Definition}
\newtheorem{definition}[thm]{Definition}
\newtheorem{remark}[thm]{Remark}
\newtheorem{question}[thm]{Question}
\newtheorem{convention}[thm]{Convention}
\newtheorem{example}{Example}
\DeclareMathOperator{\Comm}{\mathrm{Comm}}
\DeclareMathOperator{\dist}{\mathsf{dist}}
\DeclareMathOperator{\Hdist}{\mathsf{hdist}}
\newcommand{\Len}{\rm{Len}}
\newcommand{\Area}{\rm{Area}}
\newcommand{\cala}{{\mathcal{A}}}
\newcommand{\calb}{{\mathcal{B}}}
\newcommand{\call}{{\mathcal{L}}}
\newcommand{\caln}{{\mathcal{N}}}
\newcommand{\calp}{{\mathcal{P}}}
\newcommand{\calP}{{\mathcal P}}
\newcommand{\calq}{{\mathcal{Q}}}
\newcommand{\calr}{{\mathcal{R}}}
\newcommand{\cals}{{\mathcal{S}}}
\newcommand{\calt}{{\mathcal{T}}}
\newcommand{\NN}{\mathbb{N}}
\tikzstyle{blackNode}=[fill=black, draw=black, shape=circle]
\tikzset{snake it/.style={decorate, decoration=snake}}
\newcommand{\nclose}[1]{\ensuremath{\langle\!\langle#1\rangle\!\rangle}}
\newcommand{\N}{\mathbb{N}}
\newcommand{\Z}{\mathbb{Z}}
\renewcommand{\sl}{\mathrm{sl}}
\newcommand{\redone}{\mathop{\longrightarrow}\limits_{i}^{1}}
\newcommand{\redtwo}{\mathop{\longrightarrow}\limits_{i}^{2}}
\newcommand{\red}{\mathop{\longrightarrow}\limits_{i}}
\newcommand{\eqG}{\mathop{=}\limits^{G}}
\newcommand{\area}{\mathrm{Area}^{rel}}
\let\@wraptoccontribs\wraptoccontribs
\title[Relative filling functions]{Quasi-isometry invariance of relative filling functions}
\author{Sam Hughes}
\address{Sam Hughes, University of Oxford}
\email{sam.hughes@maths.ox.ac.uk}
\author{Eduardo Mart\'inez-Pedroza}
\address{Eduardo Mart\'inez-Pedroza, Memorial University of Newfoundland}
\email{emartinezped@mun.ca}
\author{Luis Jorge S\'anchez Salda\~na}
\address{Luis Jorge S\'anchez Salda\~na, Universidad Nacional Aut\'onoma de M\'exico}
\email{luisjorge@ciencias.unam.mx}
\address{Ashot Minasyan, University of Southampton}
\email{aminasyan.gmail.com}
\date{\today}
\subjclass{20F65, 20F67, 57M07, 20F06, 57M60}
\begin{document}

\maketitle
\begin{abstract}
    For a finitely generated group $G$ and collection of subgroups $\mathcal{P}$ we prove that the relative Dehn function of a pair $(G,\mathcal{P})$ is invariant under quasi-isometry of pairs.  Along the way we show quasi-isometries of pairs preserve almost malnormality of the collection and fineness of the associated coned off Cayley graphs.  We also prove that for a cocompact simply connected combinatorial $G$-$2$-complex $X$ with finite edge stabilisers, the combinatorial Dehn function is well-defined if and only if the $1$-skeleton of $X$ is fine.
    
    We also show that if $H$ is a hyperbolically embedded subgroup of a finitely presented group $G$, then the relative Dehn function of the pair  $(G, H)$ is well-defined. In the appendix, it is shown that  show that the Baumslag-Solitar group $\mathrm{BS}(k,l)$ has a well-defined Dehn function with respect to the cyclic subgroup generated by the stable letter if and only if neither $k$ divides $l$ nor $l$ divides $k$.
\end{abstract}
 
\section{Introduction}
The main objects of study in this article are pairs $(G,\calp)$ where $G$ is a \textbf{finitely generated group} with a chosen word metric $\dist_G$, and $\calp$ is a \textbf{finite collection of subgroups}, \emph{note that these assumptions will stand throughout the introduction.}

Let $\Hdist_G$ denote the Hausdorff distance between subsets of $G$, and let $G/\calp$ denote the collection of left cosets $gP$ for $g\in G$ and $P\in\calp$.

For constants $L\geq 1$, $C\geq 0$ and $M\geq 0$, an \emph{$(L,C,M)$-quasi-isometry of  pairs}  $q\colon (G, \mathcal{P})\to (H, \mathcal{Q})$ is an $(L,C)$-quasi-isometry $q\colon G \to H$ such that  the relation
\begin{equation*}
\{ (A, B)\in G/\mathcal{P} \times H/\mathcal{Q} \colon \Hdist_H(q(A), B) <M  \} \end{equation*}
satisfies that the projections to $G/\mathcal{P}$ and $H/\mathcal{Q}$ are surjective. 

This article is part of the program of investigating which properties of pairs $(G,\calp)$ are invariant under quasi-isometry of pairs. There are recent results in this direction.
For example, it is a consequence of the quasi-isometric rigidity of relative hyperbolicity~\cite{BDM09}, that if $(G,\calp)$ is a relatively hyperbolic pair, $\calp$ is a collection of non-relatively hyperbolic groups, and  $(G,\calp)$ and $(H,\calq)$ are quasi-isometric pairs, then $H$ is hyperbolic relative to $\calq$. Under natural assumptions, quasi-isometries of pairs between relatively hyperbolic pairs induce  canonical homeomorphisms between their Bowditch boundaries~\cite{BuHr21} and canonical isomorphisms of JSJ trees~\cite{HaHr19}. Outside the framework of relatively hyperbolic groups,   
it is known that quasi-isometries of pairs preserve the number of Bowditch's filtered ends~\cite{MaSa21}.  For a recent survey we direct the reader to \cite{HMS2021}.

For a pair $(G, \calp)$, Osin introduced the notions of \emph{finite relative presentation} and \emph{relative Dehn function} $\Delta_{G,\calp}$ as natural  generalizations of their standard counterparts for finitely generated groups, see~\cite{Osin06}. These notions characterise relatively hyperbolic pairs $(G,\calp)$ as the ones which are relatively finitely presented and have relative Dehn function bounded from above by a linear function. By quasi-isometric rigidity of relative hyperbolicity, among relatively finitely presented pairs,   quasi-isometries of pairs preserve  having linear relative Dehn function.

The main result of this article confirms the natural expectation that among relatively finitely presented pairs, quasi-isometric pairs have equivalent relative Dehn functions. This is not an elementary statement, as we describe below.

\begin{convention}[$\Delta_{G,\calp}$  is well-defined]
By \emph{$\Delta_{G,\calp}$  is well-defined} we mean that $G$ is finitely presented relative to $\calp$ and the relative Dehn function $\Delta_{G,\calp}$ takes only finite values with respect to a finite relative presentation of $G$ and $\calp$. From here on, when we refer to a relative Dehn function, we always assume that it has been defined using a finite relative presentation.
\end{convention}

Let $\calp$ be a collection of subgroups of group $G$.  A \emph{refinement} $\calp^\ast$ of $\calp$ is a set of representatives of conjugacy classes of the collection of subgroups $\{\Comm_G(gPg^{-1}) \colon P\in\calp \text{ and } g\in G \}$ where $\Comm_G(P)$ denotes the commensurator of the subgroup $P$ in $G$.

\begin{thmx}\label{thmx:DehnQI}
Let $(G,\calp)\to(H, \calq)$ be a quasi-isometry of pairs and let $\calp^*$ be a refinement of $\calp$. If the relative Dehn function $\Delta_{H,\calq}$ is well-defined, then $\Delta_{G,\calp^*}$ is well-defined and $\Delta_{G,\calp^\ast} \asymp \Delta_{H,\calq}$.
\end{thmx}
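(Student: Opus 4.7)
The approach I would take is to reformulate both sides in terms of combinatorial Dehn functions of simply connected $G$- and $H$-equivariant 2-complexes, and then transport disk diagrams across the quasi-isometry. On the $H$ side, a finite relative presentation of $(H,\calq)$ gives a standard simply connected 2-complex $Y$ with finite edge stabilizers whose 1-skeleton is the coned-off Cayley graph on $H \sqcup H/\calq$; by the last theorem stated in the abstract, ``$\Delta_{H,\calq}$ is well defined'' is equivalent to the 1-skeleton of $Y$ being fine, and the combinatorial Dehn function of $Y$ is then equivalent to $\Delta_{H,\calq}$.

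The first step is to apply the preliminary results announced in the abstract: quasi-isometries of pairs preserve almost malnormality of the peripheral collection and fineness of the coned-off Cayley graph. Combining this with the observation that the natural stabilizer of the cone vertex attached to $gP$ is the commensurator $\Comm_G(gPg^{-1})$, one sees that the refinement $\calp^\ast$ is the correct peripheral collection on $G$ for which the quasi-isometry of pairs upgrades to a quasi-isometry of coned-off Cayley graphs $\cay(G,\calp^\ast) \to \cay(H,\calq)$ sending cone vertices to cone vertices with uniformly bounded defect. In particular, the $G$-side coned-off Cayley graph is fine, and the collection $\calp^\ast$ is finite and almost malnormal.

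Next I would construct a $G$-equivariant simply connected combinatorial 2-complex $X$ realizing a finite relative presentation of $(G,\calp^\ast)$, by pulling back the finitely many $H$-orbits of 2-cells of $Y$ through the quasi-isometry and filling the resulting loops (which have uniformly bounded length in $\cay(G,\calp^\ast)$) to produce relative relators of bounded length. Fineness of $\cay(G,\calp^\ast)$ ensures that the combinatorial Dehn function of $X$ is well defined, so $\Delta_{G,\calp^\ast}$ is well defined. The $\asymp$ comparison is then the classical quasi-isometry argument for Dehn functions, adapted to the relative setting: a combinatorial disk diagram in $Y$ for a loop $\gamma$ pulls back via the quasi-isometry to a diagram in $X$ for a loop close to the pullback of $\gamma$, with area multiplied by a uniform constant; the symmetric argument gives the reverse inequality.

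The main obstacle I expect is controlling the pullback of 2-cells and the transport of diagrams in the coned-off setting: cone vertices have infinite valence, so a naive pullback of a small loop through a cone point might spiral uncontrollably. The key input that makes the argument work is precisely the combination of almost malnormality of $\calp^\ast$ and fineness, both supplied by the preliminary results; together they force the quasi-isometry to respect cone neighborhoods with uniform combinatorial control, which is what bounds perimeters of pulled-back 2-cells and keeps the area blow-up multiplicative. Verifying this uniform control at cone points, and in particular showing that it is exactly the failure of $\calp$ itself to capture the correct stabilizers that necessitates passing to $\calp^\ast$, is the subtle step on which the whole argument rests.
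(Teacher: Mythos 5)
The structural first half of your argument is the paper's: \Cref{thmx:malnormal} and \Cref{thmx:fine} give almost malnormality of $\calp^*$, a quasi-isometry of pairs $(G,\calp^*)\to(H,\calq)$, an induced quasi-isometry $\hat q$ of coned-off Cayley graphs, and fineness of $\hat\Gamma(G,\calp^*)$; and your translation of ``$\Delta_{H,\calq}$ well defined'' into fineness of the coned-off Cayley complex and equivalence with its combinatorial Dehn function is exactly \Cref{thm.relDehn2complex} together with \Cref{lem:dehnfunction:connedoff:cayley}. Two points need repair, though. You must first discard the finite members of $\calp$ and $\calq$ (the paper's opening reduction): \Cref{thmx:malnormal} requires $\calq$ to consist of infinite subgroups, and since $\Comm_G(P)=G$ when $P$ is finite, a refinement of a collection containing finite subgroups degenerates, so this reduction is not cosmetic. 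Also, your motivating claim that the cone vertex over $gP$ has stabiliser $\Comm_G(gPg^{-1})$ is false --- its stabiliser is $gPg^{-1}$; commensurators enter because $\dot q$ only matches cosets up to finite Hausdorff distance, i.e.\ up to commensurability of conjugates, which is why reducedness of $\calp^*$ and $\calq$ is what makes $\dot q$ a bijection (\Cref{prop:DotqIsFunction}).

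The genuine gap is the step where you ``construct a $G$-equivariant simply connected $2$-complex $X$ realizing a finite relative presentation of $(G,\calp^*)$ by pulling back the finitely many $H$-orbits of $2$-cells of $Y$.'' A quasi-inverse of $\hat q$ is not equivariant, so pulling back orbit representatives and translating does not by itself yield a $G$-complex, and simple connectivity of whatever you attach is exactly the nontrivial point: relative finite presentability of $(G,\calp^*)$ must be proved before $\Delta_{G,\calp^*}$ is even defined, and it does not follow from fineness alone. The paper devotes a separate argument to this: fillability (coarse simple connectivity) transports across $\hat q$ by \Cref{Prop:Bridson:qi:invariance}, and a connected, cocompact, fine, fillable coned-off Cayley graph forces a finite relative presentation via a Bass--Serre tree/covering space argument (\Cref{prop:fillable03}). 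The equivariant fix for your construction is to cone off \emph{all} circuits of length at most a suitable $k$ (fineness plus cocompactness gives finitely many orbits) and prove simple connectivity by the filling-transport argument you sketch. Once that is in place, your direct disk-diagram comparison of $\Delta_X$ and $\Delta_Y$ can be carried out (each transported cell boundary is a bounded-length loop, fillable with uniformly bounded area again by fineness and cocompactness), but note the paper sidesteps diagram transport entirely: it quotes Osin's theorem (\Cref{prop:Osin}) to identify both relative Dehn functions with coarse isoperimetric functions $f_N$ of the coned-off graphs and then applies \Cref{Prop:Bridson:qi:invariance}, which avoids any control of combinatorial diagrams at the infinite-valence cone vertices that you rightly flag as the delicate point.
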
 

A phenomenon that occurs for pairs $(G,\calp)$ is that being relatively finitely presented does not imply that the relative Dehn function is well-defined. This is in sharp contrast with the standard framework where a finitely presented group always has a well-defined Dehn function. The proof of Theorem~\ref{thmx:DehnQI} provides an insight into this phenomenon via the following  results on which our argument relies on.

In the framework of relatively hyperbolic groups, Bowditch introduced the notion of fine graph~\cite{Bo12}. A \emph{circuit} in a simplicial graph is an embedded close path. A simplicial graph $\Gamma$ is \emph{fine} if for every $n\geq 0$ and every edge $e$ in $\Gamma$ there are finitely many circuits of length less than or equal to $n$ which contain $e$. This is weaker than the graph being locally finite. The relationship between this notion and isoperimetric functions was made explicit by Groves and Manning~\cite[Proposition~2.50, Question~2.51]{GM08}. The following result can be interpreted as a homotopical version of ~\cite[Theorem~1.3]{MP16} where an analogous statement is proved for homological Dehn functions.

\begin{thmx}[\Cref{thm.relDehn2complex}]\label{thmx.relDehn2complex}
Let $X$ be a cocompact simply connected combinatorial $G$-$2$-complex with finite edge stabilisers.   The combinatorial Dehn function $\Delta_X$ of $X$ takes only finite values if and only if the $1$-skeleton of $X$ is a fine graph.
\end{thmx}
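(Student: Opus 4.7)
The plan is to prove both directions of \Cref{thmx.relDehn2complex} separately. A key preliminary observation is that under the hypotheses on $X$, every edge is contained in only finitely many $2$-cells and every $2$-cell has finite stabiliser. Indeed, cocompactness gives finitely many $G$-orbits of $2$-cells; within each orbit $G\cdot f$, the $2$-cells containing a fixed edge $e$ are parametrised by $G_e$-orbits of the finite set $\{e' \in \partial f : e' \in G\cdot e\}$, hence form a finite set by finiteness of $G_e$. Similarly, $G_f$ acts on the finite set $\partial f$, and the kernel of this action is contained in an edge stabiliser, so $G_f$ is finite. Let $K$ denote a uniform bound on the boundary length of $2$-cells of $X$.

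For the ``only if'' direction, suppose $\Delta_X$ is well-defined. Fix an edge $e$ and $n \geq 0$; we aim to show the set of circuits of length at most $n$ through $e$ is finite. Each such circuit $c$ is a combinatorial loop of length $\leq n$, so by hypothesis admits a combinatorial van Kampen diagram $\phi_c \colon D_c \to X$ with at most $A := \Delta_X(n)$ faces, each of boundary length $\leq K$. The number of combinatorial isomorphism types of such abstract disks $D$ with a distinguished boundary edge $\tilde{e}$ is finite. For a fixed abstract $(D, \tilde{e})$, we count combinatorial maps $\phi \colon D \to X$ with $\phi(\tilde{e}) = e$: choose a spanning tree of the dual graph of $D$ rooted at the face containing $\tilde{e}$, and map faces in breadth-first order. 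The root face maps to one of the finitely many $2$-cells of $X$ containing $e$; each subsequent face, adjacent to an already-mapped face along an edge whose image is determined, must map to one of the finitely many $2$-cells containing that image. Hence there are finitely many maps $\phi$, so finitely many possible boundary circuits, contradicting the supposition of infinitely many circuits of length $\leq n$ through $e$.

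For the ``if'' direction, suppose the $1$-skeleton is fine. By cocompactness there are finitely many $G$-orbits of edges, and by fineness each edge lies in finitely many circuits of length $\leq n$. Translating by $G$, we deduce that the set of circuits of length $\leq n$ consists of finitely many $G$-orbits. Since $X$ is simply connected, each representative has a finite combinatorial area, and $G$-equivariance implies a common area within each orbit; let $A'(n)$ be the maximum area over the finitely many orbit representatives. A general combinatorial loop of length $\leq n$ decomposes, by iteratively excising the sub-loop between a repeated pair of vertices, into a concatenation of at most $n$ simple circuits each of length $\leq n$. Summing areas yields $\Delta_X(n) \leq n \cdot A'(n) < \infty$.

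The main obstacle is the ``only if'' direction: translating a global area bound into finitely many combinatorial fillings anchored at $e$. This rests on the observation that only finitely many $2$-cells of $X$ contain any given edge, which is where the hypotheses of finite edge stabilisers and cocompactness are both essential; dropping either would allow unboundedly many $2$-cells to meet a single edge and the inductive counting argument would collapse.
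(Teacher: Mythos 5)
Your proposal is correct and follows essentially the same route as the paper: the same preliminary observation that cocompactness plus finite edge stabilisers forces each edge to lie in finitely many $2$-cells, the same counting of bounded-area fillings anchored at a fixed edge for the ``only if'' direction (your breadth-first enumeration over the dual graph is just an explicit version of the paper's count of ``golden'' disks built face by face along shared edges), and the same ``if'' argument via finitely many $G$-orbits of short circuits and decomposition of a general loop into circuits. No substantive differences or gaps.
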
 
 
It is obvious that being fine is not a property preserved by quasi-isometries in the class of graphs. For a pair $(G,\calp)$,  together with a finite generating set $S$ of $G$, one can assign a connected and cocompact $G$-graph known as the coned-off Cayley graph $\hat\Gamma(G,\calp,S)$; a  notion introduced by Farb~\cite{farb}, see \Cref{def:conedOffCayleyGraph}. It is an observation that the quasi-isometry type of $\hat\Gamma(G,\calp,S)$ is independent of the finite generating set $S$; \emph{throughout the   introduction  $\hat\Gamma(G,\calp)$ denotes the coned-off Cayley graph with respect to some finite generating set of $G$}. In this framework, under some assumptions,  we are able to prove that fineness is preserved under quasi-isometry of pairs in the class of coned-off Cayley graphs. 
A collection of subgroups $\calp$ of a group $G$ is \emph{reduced} if  
for any $P,Q\in \calp$ and $g\in G$, then $P$ and $gQg^{-1}$ being commensurable subgroups  implies $P=Q$ and $g\in P$.

 \begin{thmx}[\Cref{thm:fine}]\label{thmx:fine}
Let $q\colon (G, \calp ) \to (H, \calq)$ be a quasi-isometry of pairs.
Suppose  $\calp$ and $\calq$ are reduced. Then there is an induced  quasi-isometry of graphs $\hat q\colon \hat \Gamma (G,\calp) \to \hat \Gamma (H,\calq)$, and if $\hat \Gamma (H,\calq)$ is a fine graph then $\hat \Gamma (G,\calp)$ is a fine graph.
\end{thmx}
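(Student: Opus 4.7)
The plan is to construct the induced map $\hat q$ using reducedness, check that it is a quasi-isometry of graphs, and then transfer fineness via Bowditch's angle characterization. The key technical point is that under reducedness of both collections, $\hat q$ is injective on cone vertices, which is precisely what makes the fineness transfer work.

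For the construction: for each coset $gP\in G/\calp$, surjectivity onto $G/\calp$ built into the QI-of-pairs definition yields some $hQ\in H/\calq$ with $\Hdist_H(q(gP),hQ)<M$; reducedness of $\calq$ makes this $hQ$ unique, since two cosets of $\calq$ both Hausdorff-close to $q(gP)$ would have finite Hausdorff distance to one another, so their peripheral subgroups would be commensurable via conjugation, forcing equality. Set $\hat q(v_{gP}):=v_{hQ}$ and $\hat q|_G:=q$. Applying the same recipe to a quasi-inverse $\bar q$ of $q$ (which is itself a QI of pairs, with the roles of $\calp$ and $\calq$ swapped) produces an induced map $\hat{\bar q}$; comparing the two and using reducedness of $\calp$ shows $\hat q$ is injective on cone vertices and $\hat{\bar q}$ is a coarse inverse of $\hat q$. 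The Lipschitz estimate is edgewise: a Cayley edge $(g,gs)$ maps to a Cayley path of length $\leq L+C$, while a cone edge $(g,v_{gP})$ maps to a path of length $\leq M+1$ (a Cayley geodesic of length $\leq M$ from $q(g)\in N_M(hQ)$ to some $h\in hQ$, then one cone edge to $v_{hQ}$). Coarse density combines coarse density of $q$ with surjectivity of the $H/\calq$-projection, and the lower quasi-embedding bound is obtained by running the same construction on $\bar q$.

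For the fineness transfer I would invoke Bowditch's angle characterization (see~\cite{Bo12}): a graph is fine iff at each vertex $v$, every edge $e$ at $v$, and every $n\in\NN$, the set of edges $e'$ at $v$ with $\angle_v(e,e')\leq n$ is finite, where $\angle_v(e,e')$ is the length of the shortest path between the non-$v$ endpoints of $e$ and $e'$ inside $\Gamma\setminus\{v\}$. Cayley vertices of $\hat\Gamma(G,\calp)$ have valence at most $2|S|+|\calp|$, so the condition is automatic there. At a cone vertex $v_{gP}$, fix $e_1=(v_{gP},g_1)$ and consider $e_2=(v_{gP},g_2)$ with $\angle_{v_{gP}}(e_1,e_2)\leq n$, realized by a path $\alpha\colon g_1\to g_2$ in $\hat\Gamma(G,\calp)\setminus\{v_{gP}\}$ of length $\leq n$. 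The canonical image walk $\hat\alpha$, obtained by replacing each edge of $\alpha$ by the bounded $\hat q$-image path above, avoids $v_{hQ}$: any cone vertex of $\alpha$ is distinct from $v_{gP}$, hence by injectivity maps to a cone vertex distinct from $v_{hQ}$, and each edge image visits at most one cone vertex, namely its target. Extending $\hat\alpha$ at its ends by Cayley geodesics of length $\leq M$ from $q(g_i)$ to chosen $h_i\in hQ$ (which lie in the Cayley subgraph of $H$, so avoid $v_{hQ}$) gives a path $h_1\to h_2$ of length $\leq Kn+2M$ in $\hat\Gamma(H,\calq)\setminus\{v_{hQ}\}$, whence $\angle_{v_{hQ}}((v_{hQ},h_1),(v_{hQ},h_2))\leq Kn+2M$. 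Fineness of $\hat\Gamma(H,\calq)$ bounds the number of such $h_2\in hQ$; for each such $h_2$, the set $q^{-1}(B_M(h_2))\cap gP$ is finite by coarse properness of $q$ and local finiteness of $G$. Hence only finitely many $g_2$ arise, giving fineness at $v_{gP}$.

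The main obstacle is keeping the image walks outside $\{v_{hQ}\}$, which is exactly what the reducedness-driven injectivity of $\hat q$ on cone vertices delivers; a secondary subtlety is tracking that a quasi-inverse of a QI of pairs is again a QI of pairs, needed both for injectivity and for the lower quasi-embedding bound on $\hat q$.
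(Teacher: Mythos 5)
Your proposal is correct and follows essentially the same route as the paper: reducedness plus the commensurability--finite-Hausdorff-distance criterion makes $\dot q$ a well-defined bijection (\Cref{prop:DotqIsFunction}), the edgewise image paths give the quasi-isometry $\hat q$ with cone-vertex tracking, and fineness transfers at cone vertices exactly as in \Cref{prop:FineQI-2}, with finite valence handling Cayley vertices. The only cosmetic differences are that you argue fineness directly rather than by contrapositive and package the injectivity/lower bound via a quasi-inverse of the pair quasi-isometry, where the paper invokes symmetry.
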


The condition that the coned-off Cayley graph $\hat\Gamma(G,\calp)$ is   fine forces the collection $\calp$ to be almost malnormal (see Definition~\ref{def:malnormal}). It is an observation that any almost malnormal collection of infinite subgroups is reduced. We prove that the property of being almost malnormal is preserved under quasi-isometry of pairs up to taking a refinement.  

\begin{thmx}[\Cref{thm:malnormal}]\label{thmx:malnormal}
 Let $q\colon (G, \mathcal{P})\to (H, \mathcal{Q})$ be a quasi-isometry of pairs.  If $\calq$ is an almost malnormal collection of infinite  subgroups, then any refinement $\calp^*$ of $\calp$ is almost malnormal and $q\colon (G,\calp^*) \to (H,\calq)$ is a quasi-isometry of pairs.
\end{thmx}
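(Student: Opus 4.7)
The strategy is to recast almost malnormality as a coarse-geometric property of cosets and then transport it across the quasi-isometry of pairs. Fix an $(L,C,M)$-quasi-isometry of pairs $q\colon (G,\calp)\to(H,\calq)$ with a quasi-inverse $\bar q$. The key ingredient I would establish first is the following coarse characterisation: a finite collection $\calr$ of infinite subgroups of a finitely generated group $K$ is almost malnormal if and only if, for every $R\geq 0$, there exists $D=D(R)$ such that for any two distinct cosets $A,A'\in K/\calr$,
\[
\diam\bigl(\caln_R(A)\cap \caln_R(A')\bigr)\leq D.
\]
The forward direction is a standard packing argument that uses the $K$-action to normalise $A$ and the finiteness of $\calr$; the converse uses that an infinite-order element of $P\cap gP'g^{-1}$ produces an unbounded common coarse chunk of $P$ and $gP'$.

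The second ingredient is a refinement lemma: two cosets $gP,g'P'\in G/\calp$ satisfy $\Hdist_G(gP,g'P')<\infty$ if and only if $P$ and $(g^{-1}g')P'(g^{-1}g')^{-1}$ are commensurable in $G$, if and only if $gP$ and $g'P'$ are contained in a common coset of $\calp^*$. Moreover each coset of $\calp^*$ decomposes as the disjoint union of finitely many cosets of $\calp$, all lying in a uniform neighbourhood of one another. Granting these, the main argument proceeds as follows. Since $q$ matches every $A\in G/\calp$ with some $B\in H/\calq$ at bounded Hausdorff distance and $B$ is infinite, every $P\in\calp$ is infinite, so each $\Comm_G(P)\in\calp^*$ is infinite. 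To prove almost malnormality of $\calp^*$, fix $R\geq 0$ and distinct cosets $A^*,A'^*\in G/\calp^*$; choose sub-cosets $A\subseteq A^*$ and $A'\subseteq A'^*$ from $G/\calp$, which by the refinement lemma satisfy $\Hdist_G(A,A')=\infty$. Let $B,B'\in H/\calq$ be partners of $A,A'$ under $q$. If $B=B'$, then $q(A)$ and $q(A')$ both lie within $M$ of $B$, and applying $\bar q$ would force $\Hdist_G(A,A')$ to be bounded, a contradiction. Hence $B\neq B'$; the coarse characterisation applied to $\calq$ with parameter $LR+C+M$ bounds $\diam(\caln_{LR+C+M}(B)\cap \caln_{LR+C+M}(B'))$, and pushing this bound back through $q$, combined with the uniform inclusion of $A^*$ in a fixed neighbourhood of $A$, yields the desired bound on $\diam(\caln_R(A^*)\cap \caln_R(A'^*))$.

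Finally, verifying that $q\colon(G,\calp^*)\to(H,\calq)$ is a quasi-isometry of pairs with adjusted constants is essentially formal: $q$ is still an $(L,C)$-quasi-isometry of $G$ to $H$, and for each $A^*\in G/\calp^*$ one may pick $A\subseteq A^*$ in $G/\calp$ with partner $B\in H/\calq$; the refinement lemma places $A^*$ in a uniform neighbourhood of $A$, so $\Hdist_H(q(A^*),B)$ is controlled, and surjectivity of both projections follows from surjectivity for the original pair. The main obstacle is the quantitative coarse characterisation of almost malnormality together with the refinement lemma, since the rest of the transfer across $q$ is essentially bookkeeping of constants; ensuring that the infinite-subgroup hypothesis is used only where truly needed, and that the forward packing argument survives the absence of any malnormality on $\calp$ itself, will require some care.
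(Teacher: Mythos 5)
Your overall route is essentially the paper's: a coarse characterisation of almost malnormality in terms of coarse intersections of cosets (the paper's \Cref{lem:GeomMalnormal}), a correspondence between $G/\calp$ and $G/\calp^*$ at bounded Hausdorff distance (the paper's \Cref{lem:QI-Refinement}), and transport across $q$. However, there is a genuine gap at the load-bearing point, namely your ``refinement lemma''. The claim that each coset of $\calp^*$ is a disjoint union of finitely many $\calp$-cosets all lying in a uniform neighbourhood of one another amounts to saying that every $P\in\calp$ has finite index in $\Comm_G(P)$ (finiteness of $\calp$ then gives uniformity), and this is simply false for a general finite collection: for $G=\ZZ^2$ and $\calp=\{\ZZ\times\{0\}\}$ one has $\Comm_G(P)=\ZZ^2$, so the unique $\calp^*$-coset is a union of infinitely many $\calp$-cosets at unbounded distance from $P$; taking $\calp=\{\ZZ\times\{0\},\{0\}\times\ZZ\}$ also kills your asserted equivalence ``$\Hdist_G(gP,g'P')<\infty$ iff $gP$ and $g'P'$ lie in a common coset of $\calp^*$''. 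In the setting of the theorem the finite-index property does hold, but it has to be \emph{deduced} from the hypotheses: almost malnormality of the infinite collection $\calq$ gives $Q=\Comm_H(Q)$ and that $\calq$ is reduced, hence $\dot q$ is a well-defined map of coset spaces, and then a pigeonhole argument using local finiteness of $(G/\calp,\Hdist)$ and $(H/\calq,\Hdist)$ together with \Cref{lem:CommensurableSubgps} transfers ``finite index in the commensurator'' from $\calq$ back to $\calp$. This is exactly the paper's \Cref{lem:FiniteIndex}, and it is the one step your outline never supplies; without it both your malnormality argument (the ``uniform inclusion of $A^*$ in a fixed neighbourhood of $A$'') and your verification that $q\colon(G,\calp^*)\to(H,\calq)$ is a quasi-isometry of pairs collapse, since neither is ``essentially formal'' absent that input.

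Two smaller points. First, your uniform coarse characterisation is correct and indeed equivalent to the paper's pointwise version of \Cref{lem:GeomMalnormal} (translate a point of a large coarse intersection to the identity and pigeonhole over the finitely many cosets meeting a ball of radius $R$), but in the converse direction you should use infinitely many elements of the infinite subgroup $P\cap gP'g^{-1}$ rather than an ``infinite-order element'': an infinite subgroup need not contain one. Second, ``contained in a common coset of $\calp^*$'' should be replaced throughout by ``at uniformly bounded Hausdorff distance from a common coset of $\calp^*$''; making that correspondence well defined is where reducedness of the refinement (\Cref{lem:RefinementImpliesReduce}) and \Cref{lem:CommensurableSubgps} are needed, as in the paper's proof of \Cref{prop:MalnormalityQIinvariance}.
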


The previous results can be linked to Osin's definition  of relative Dehn function $\Delta_{G,\calp}$ of a relatively finitely presented pair $(G,\calp)$ via the following result. A connected graph $\Gamma$ is called  \emph{fillable} if, when considering $\Gamma$ with the length metric obtained by regarding each edge as an segment  of length one, there is an integer $k$ such that the coarse isoperimetric function $f_k^\Gamma$ takes only finite values, see Section~\ref{sec:coarse:isop.functions} for definitions. 

\begin{thmx}[See \Cref{thm:ConefOff}]\label{thmx:ConefOff}
If $(G,\calp)$ is a relatively finitely presented pair, then
\begin{enumerate}
    \item \label{thmx:ConefOff:1} $\hat\Gamma(G,\calp)$ is fillable. 
    \item \label{thmx:ConefOff:2} The relative Dehn function $\Delta_{G,\calp}$ is well-defined if and only if  $\hat\Gamma(G,\calp)$ is fine graph. 
\end{enumerate}
Conversely, if $\hat\Gamma(G,\calp)$ is   fine and fillable, then $(G,\calp)$ is a relatively finitely presented pair and hence $\Delta_{G,\calp}$ is well-defined.
\end{thmx}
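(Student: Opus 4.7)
The plan is to build, from any finite relative presentation of $(G,\calp)$, a cocompact simply connected combinatorial $G$-$2$-complex $X$ with finite edge stabilisers whose $1$-skeleton is $\hat\Gamma(G,\calp,S)$, and whose combinatorial Dehn function $\Delta_X$ is $\asymp$-equivalent to $\Delta_{G,\calp}$. Once this correspondence is in place, \Cref{thmx.relDehn2complex} does most of the work. For the converse direction, the reverse construction extracts a finite relative presentation from fillability plus fineness.

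First I would prove the forward direction (1) and (2). Fix a finite relative presentation $\langle S \cup \calp \mid \calr\rangle$ with $\calr$ finite, and form the $G$-$2$-complex $X$ whose $1$-skeleton is $\hat\Gamma(G,\calp,S)$, with the cone vertex over each coset $gP$ carrying stabiliser $gPg^{-1}$. For every relator $r \in \calr$, I translate $r$ into a closed combinatorial loop $\gamma_r$ in $\hat\Gamma(G,\calp,S)$ by replacing each peripheral letter $p \in P\setminus\{1\}$ by the length-two path through the cone vertex of $P$, and attach a $2$-cell along each $G$-translate of $\gamma_r$. This yields a cocompact $G$-complex with finite edge stabilisers. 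Simple connectedness of $X$ is equivalent to $(G,\calp)$ being relatively presented by $\calr$, via the standard van Kampen translation between disc diagrams over $X$ and relative van Kampen diagrams, and this translation preserves area up to a multiplicative constant depending only on $S$ and $\calr$. Thus $\Delta_X \asymp \Delta_{G,\calp}$. For (1), fillability of $\hat\Gamma(G,\calp)$ is immediate with $k$ the maximum perimeter of a $2$-cell of $X$: every combinatorial loop in $\hat\Gamma(G,\calp)$ is null-homotopic in the simply connected complex $X$, so its area with respect to loops of length $\leq k$ is finite (even if the value of $\Delta_X$ at the corresponding length is infinite, the coarse isoperimetric filling itself exists whenever the loop is null-homotopic in $X$). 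For (2), \Cref{thmx.relDehn2complex} applied to $X$ gives that $\Delta_X$ takes only finite values if and only if $X^{(1)} = \hat\Gamma(G,\calp)$ is fine, and this transfers to $\Delta_{G,\calp}$ via the equivalence $\Delta_X \asymp \Delta_{G,\calp}$.

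For the converse, assume $\hat\Gamma(G,\calp)$ is fine and fillable. Fillability yields a constant $k$ such that the coarse isoperimetric function $f_k^{\hat\Gamma(G,\calp)}$ takes only finite values, so every combinatorial loop is a concatenation (with backtracks) of loops of length at most $k$. Cocompactness of the $G$-action on $\hat\Gamma(G,\calp,S)$ ensures that there are finitely many $G$-orbits of loops of length at most $k$ based at vertices of $G$; choose a finite set of representatives and translate each back into a word in $S \cup \calp$ by reading peripheral excursions through cone vertices as peripheral letters. This set, together with $S$, forms a finite relative presentation: fillability guarantees that these relators normally generate the kernel of the free product $F(S) * \ast_{P \in \calp} P \twoheadrightarrow G$, and so $(G,\calp)$ is relatively finitely presented. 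Building $X$ from this relative presentation as before, $X$ is simply connected with $X^{(1)} = \hat\Gamma(G,\calp)$ fine, hence \Cref{thmx.relDehn2complex} shows $\Delta_X$ is well defined, and so $\Delta_{G,\calp}$ is well defined.

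The main obstacle I anticipate is the bookkeeping that makes the equivalence $\Delta_X \asymp \Delta_{G,\calp}$ rigorous: one must show that a relative van Kampen diagram of minimal area $N$ translates into a disc diagram in $X$ of area at most $C\cdot N$ (and conversely), which is subtle because peripheral syllables of arbitrary length in the relative presentation must be encoded by bounded combinatorial data at the cone vertices. Handling this correctly requires a careful definition of the $2$-cells near cone vertices so that the relators in $\calr$ provide a simply connected complex without having to add infinitely many peripheral $2$-cells, and it is at this point that having only finite edge stabilisers in $X$ must be verified. The converse direction additionally needs the observation that fineness plays no role in producing the finite presentation itself (fillability and cocompactness suffice), but is precisely what converts a relative presentation into one with finite-valued Dehn function, matching the role played by fineness in \Cref{thmx.relDehn2complex}.
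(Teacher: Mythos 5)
Your overall architecture for the forward direction coincides with the paper's: build the coned-off Cayley complex $\hat X(G,\calp,S)$ with $1$-skeleton $\hat\Gamma(G,\calp,S)$ and $2$-cells along $G$-translates of the relator loops, prove $\Delta_{\hat X}\asymp\Delta_{G,\calp}$, and feed this into \Cref{thm.relDehn2complex}; the bookkeeping you flag is handled in the paper by the collapsing map $\varphi\colon\bar X(G,\calp,S)\to\hat X(G,\calp,S)$, which kills $\calp$-cells and is area-preserving on $R$-cells (\Cref{lem:dehnfunction:connedoff:cayley}, \Cref{cor:DeltaFine}). However, your argument for item (1) has a genuine gap. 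Fillability is not the statement that each loop admits \emph{some} finite $k$-filling; it is the statement that $f^{\hat\Gamma}_k(\ell)=\sup\{\Area_k(c)\colon \Len(c)\le \ell\}$ is finite for every $\ell$. Since $\hat\Gamma(G,\calp)$ is not locally finite, this supremum ranges over infinitely many loops (and, when the graph is not fine, over infinitely many $G$-orbits of loops, as edge stabilisers are trivial), and null-homotopy of each loop in the simply connected complex $X$ gives no uniform bound. Indeed, your inference is exactly parallel to ``each loop bounds a finite-area disk diagram, hence $\Delta_X$ takes finite values,'' which is false and is precisely the phenomenon the paper is about (e.g.\ $(\ZZ^2,\ZZ\times\{0\})$). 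The paper obtains fillability differently, from finiteness of the relative Dehn function: $\Delta_{G,\calp}$ finite-valued gives $\Delta_{\hat X}$ finite-valued, and then $f^{\hat X}_N\asymp\Delta_{\hat X}$ by \Cref{prop:deltaX:vs:fN} (this is \Cref{lem:fillable01}), which is the only case needed in the proof of \Cref{thmx:DehnQI}.

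Your converse also misplaces the role of fineness. You assert that cocompactness alone yields finitely many $G$-orbits of loops of length at most $k$, and explicitly that ``fineness plays no role in producing the finite presentation.'' This step fails: cone vertices have infinite valence, so cocompactness only gives finitely many orbits of \emph{edges}, and without fineness a single edge can lie on infinitely many pairwise non-$G$-equivalent circuits of length at most $k$; your candidate relator set need then not be finite. In the paper's \Cref{prop:fillable03}, fineness is used at exactly this point (fineness together with finitely many orbits of edges gives finitely many orbits of circuits of length at most $k$), while fillability is used, via \Cref{rem.fillable} and the identification of $\hat\Gamma$ with $\calt/N$ for the Bass-Serre tree $\calt$ of $F(S)\ast\bigast_{P\in\calp}P$, to show that these finitely many circuits normally generate the kernel $N$. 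So in the converse both hypotheses are already needed to produce the finite relative presentation, not merely to make the resulting Dehn function finite-valued; with that correction the rest of your converse (and your use of \Cref{thm.relDehn2complex} to conclude well-definedness) matches the paper.
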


The following result is a re-statement of a result of Osin~\cite[Theorem 2.53]{Osin06}, see \Cref{prop:Osin}. This statement allow us translate his definition of relative Dehn function to the realm of coarse isoperimetric functions  of coned-off Cayley graphs.

\begin{thmx}[Osin]\label{thmx:Osin}
Let $G$ be a group and let $\calp$ be a collection of subgroups. Suppose that $\Delta_{G,\calp}$ is well-defined. Then $\Delta_{G,\calp}$ is equivalent  to the coarse isoperimetric function $f^{\hat\Gamma(G,\calp)}_N$ of $\hat\Gamma(G,\calp)$ for all sufficiently large integers $N$.
\end{thmx}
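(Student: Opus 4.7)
The plan is to establish a two-directional translation between Osin's relative derivation sequences for words representing the identity in $G$ and combinatorial fillings of the associated loops in the coned-off Cayley graph $\hat\Gamma=\hat\Gamma(G,\calp,S)$ by cycles of length at most $N$, for $N$ sufficiently large.

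Fix a finite relative presentation with $R$ a finite set of words over $S$ representing the identity, set $K=\max_{r\in R}|r|$, and take $N\geq K+2$. First I would make precise the dictionary between words and loops: any reduced word $w$ over $S^{\pm}$ with $w=_G 1$ corresponds to a combinatorial edge-loop in $\Gamma(G,S)\subseteq\hat\Gamma$ of the same length, and more generally any combinatorial edge-loop in $\hat\Gamma$ can be converted, by collapsing each consecutive pair of cone-edges to a single parabolic letter, into a word over the relative alphabet $S\sqcup\bigsqcup_{P\in\calp}(P\setminus\{1\})$ of no greater length representing the identity in $G$; conversely, any relative word representing $1_G$ lifts to an edge-loop in $\hat\Gamma$ of at most twice the length.

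For the upper bound $f^{\hat\Gamma}_N\preceq\Delta_{G,\calp}$, given a word $w$ over $S$ of length at most $n$ with Osin-area $M\leq\Delta_{G,\calp}(n)$, I would realise the expression $w=_{F(S)}\prod_{i=1}^M u_i r_i u_i^{-1}\cdot\prod_{j} v_j h_j v_j^{-1}$ as a van-Kampen style disc diagram in $\hat\Gamma$: each conjugate $u_i r_i u_i^{-1}$ contributes a $2$-cell with boundary cycle of length $|r_i|\leq K$ in $\Gamma(G,S)$, and each parabolic conjugate $v_j h_j v_j^{-1}$ contributes a triangular $2$-cell through a cone vertex using the two coset-edges adjacent to that vertex together with the $S$-arc representing $h_j$. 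The resulting filling of the loop associated to $w$ uses at most $M+O(n)$ cycles of length at most $N$, giving the upper bound.

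For the lower bound $\Delta_{G,\calp}\preceq f^{\hat\Gamma}_N$, given a loop $\gamma$ of length $n$ in $\hat\Gamma$ with a filling by $M=f^{\hat\Gamma}_N(n)$ cycles of length at most $N$, I would classify each $2$-cell according to whether its boundary visits cone vertices. A cell disjoint from cone vertices corresponds to a word over $S$ of length at most $N$ equal to $1_G$; finiteness of the relative presentation and $G$-cocompactness of $\Gamma(G,S)$ show each such word has relative area bounded by a uniform constant $C=C(N,R)$. A cell visiting cone vertices decomposes into $S$-arcs between consecutive cone-edge pairs and yields a relation expressible via at most $C$ applications of $R$ together with parabolic letters. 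Summing the $R$-costs over the $M$ cells bounds the Osin area of the relative word corresponding to $\gamma$ by $CM$, which together with the dictionary yields the lower bound. The main obstacle is this direction: one must combine the geometric cell-classification with the algebraic finiteness of $R$-costs of length-$\leq N$ loops up to $G$-translation, a reduction that crucially uses relative finite presentability of $(G,\calp)$.
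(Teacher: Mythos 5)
Your overall two-way translation has the right shape, but as written the upper bound $f^{\hat\Gamma}_N\preceq\Delta_{G,\calp}$ has a genuine gap in its treatment of the parabolic part. The relative area is defined through an identity in the free product $F(S)\ast\bigast_{P\in\calp}P$, not in $F(S)$: a van Kampen diagram over the relative presentation contains, besides the at most $M\leq\Delta_{G,\calp}(n)$ cells labelled by elements of $R$, parabolic cells ($\calp$-cells) whose number and boundary lengths are not bounded by $M+O(n)$ without further argument. Your proposed remedy --- one ``triangular'' face per parabolic conjugate, spanned by the cone vertex, the two coset edges, and \emph{the $S$-arc representing $h_j$} --- does not produce faces of diameter at most $N$: the $S$-word for a parabolic element has unbounded length, its interior vertices need not be adjacent to the relevant cone vertex (nor uniformly close to it in $\hat\Gamma$), and since an $N$-filling must be a triangulated disc, filling such a face costs a number of triangles proportional to the length of that arc. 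Hence the claimed count of ``$M+O(n)$ cycles of length at most $N$'' does not follow. The repair is to never expand parabolic letters into $S$-words: keep them as single edges of the Osin--Cayley graph $\bar\Gamma(G,\calp,S)$, that is, as length-two paths through cone vertices in $\hat\Gamma$ (the $(1,1)$-quasi-isometry $\varphi$ of \Cref{def:varphi}), arrange by the standard merging of adjacent $\calp$-cells that their total boundary length is at most $n+KM$, and then cone off each resulting $\calp$-cell, whose image is a star of diameter at most $2$. This bookkeeping is exactly what Osin's Theorem~2.53 (and, inside the paper, \Cref{rem:diagrams} together with \Cref{lem:dehnfunction:connedoff:cayley} and \Cref{prop:deltaX:vs:fN}) carries out.

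In the lower bound there is a smaller but conceptually important slip: for faces whose boundary meets cone vertices, the associated relative words contain parabolic letters, and there are \emph{infinitely} many such words of length at most $3N$ (one for each parabolic element), so no appeal to finiteness of $R$ or cocompactness of $\Gamma(G,S)$ yields your uniform constant $C$; what gives it is precisely the hypothesis that $\Delta_{G,\calp}$ is well defined, namely $\Delta_{G,\calp}(3N)<\infty$. Attributing the reduction instead to ``relative finite presentability'' misses the distinction the paper stresses: relative finite presentability does not imply a well-defined relative Dehn function. Note finally that the paper's own proof of this statement is a two-line transfer: it cites Osin's Theorem~2.53, which is the analogous statement for $\bar\Gamma(G,\calp,S)$, and moves it to $\hat\Gamma(G,\calp)$ via the $(1,1)$-quasi-isometry $\varphi$ and the quasi-isometry invariance of coarse isoperimetric functions (\Cref{Prop:Bridson:qi:invariance}); your plan amounts to re-proving Osin's theorem directly, which is viable, but only with the diagram accounting described above.
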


Let us describe the argument proving  \Cref{thmx:DehnQI} using the  results that have been stated.

\begin{proof}[Proof of Theorem~\ref{thmx:DehnQI}]
Let us first observe that we can
assume that the collections $\calp$ and $\calq$ contain only infinite subgroups. First note that  if $\calp_\infty$ and $\calq_\infty$ are the collections obtained by removing finite subgroups from $\calp$ and $\calq$ respectively, then $q\colon (G,\calp_\infty)\to(H, \calq_\infty)$ is a quasi-isometry of pairs as well.  Moreover, for an arbitrary pair $(K,\call)$,  adding or removing a finite subgroup of $K$ to $\call$ preserves having well-defined relative Dehn function, and if the functions are well-defined they are equivalent,  see for example~\cite[Theorem 2.40]{Osin06}.  
 
Assume that $\calp$ and $\calq$ consist only of infinite subgroups.
Since $(H,\calq)$ is relatively finitely presented and $\Delta_{H,\calq}$ is well-defined, \Cref{thmx:ConefOff} implies that $\hat\Gamma(H,\calq)$ is fillable and fine. 
Since $\hat\Gamma(H,\calq)$ is a fine graph, it follows that $\calq$ is an almost malnormal collection.  
Then \Cref{thmx:malnormal} implies that $\calp^*$ is an almost malnormal collection. Hence, both $\calq$ and $\calp^*$ are reduced collections and $q\colon (G,\calp^*) \to(H,\calq)$ is a quasi-isometry of pairs. Now we can invoke \Cref{thmx:fine} to obtain a quasi-isometry $\hat q\colon \hat\Gamma(G,\calp^*) \to \hat\Gamma(H,\calq)$ and also obtain that $\hat\Gamma(G,\calp^*)$ is fine. It is an standard result in the literature that being fillable is a property preserved by quasi-isometry in the class of connected graphs, and any two quasi-isometric graphs have equivalent coarse isoperimetric functions~(see for instance \cite[Proposition~III.H.2.2]{BridsonHaefligerBook}). The quasi-isometry $\hat q$ implies that  $\hat\Gamma(G,\calp^*)$ is fillable and  both  $\hat\Gamma(G,\calp^*)$ and  $\hat\Gamma(H,\calq)$ have equivalent coarse isoperimetric inequalities. Then \Cref{thmx:ConefOff} implies that $(G,\calp^*)$ is relatively finitely presented and $\Delta_{G,\calp^*}$ is well-defined. The proof concludes by invoking \Cref{thmx:Osin}.
\end{proof}

In the class of finitely generated groups, being finitely presented is a quasi-isometry invariant. We do not know the answer to the following general question:
\begin{question}
Suppose that $q\colon (G,\calp) \to (H,\calq)$ is a quasi-isometry of pairs and  $(H,\calq)$ is relatively finitely presented. Is $(G,\calp)$ relatively finitely presented?
\end{question}

There is a rich class of  pairs $(G,\calp)$ with well-defined relative Dehn function.
Hyperbolically embedded subgroups were introduced in~\cite{DGOsin2017} by Dahmani, Guirardel and Osin.  Given a group $G$, $X\subset G$ and $H\leq G$, let $H\hookrightarrow_h (G,X)$ denote that $H$ is a hyperbolically embedded subgroup of $G$ with respect to $X$. 


\begin{thmx}[\Cref{thm:last}]\label{thmx:last}
Let $G$ be a finitely presented group and $H\leq G$ be a subgroup. If $H\hookrightarrow_h G$ then the relative Dehn function $\Delta_{G,H}$ is well-defined. 
\end{thmx}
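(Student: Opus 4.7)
The plan is to verify the two hypotheses of \Cref{thmx:ConefOff} for the pair $(G,\{H\})$: that the coned-off Cayley graph $\hat\Gamma(G,\{H\})$ is both fine and fillable. Once these are established, \Cref{thmx:ConefOff} directly yields that $(G,\{H\})$ is relatively finitely presented and that $\Delta_{G,H}$ is well defined. For fineness, I would unpack the hypothesis $H\hookrightarrow_h G$: by definition this means there exists $X\subseteq G$ with $\Gamma(G,X\sqcup H)$ Gromov hyperbolic and the relative metric $\hat d$ on $H$ (measured by paths in $\Gamma(G,X\sqcup H)$ that avoid $H$-edges) locally finite, and by~\cite{DGOsin2017} the subgroup $H$ is almost malnormal in $G$. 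Fix a finite generating set $S$ of $G$, which exists since $G$ is finitely presented. Local finiteness of $\hat d$ is the key input: an embedded circuit of length $\leq n$ through the cone vertex over $H$ in $\hat\Gamma(G,\{H\},S)$ decomposes as two cone edges joined by a path in $\Gamma(G,S)$ of length $\leq n-2$ whose endpoints lie in $H$, and such endpoints are forced into a bounded $\hat d$-ball and hence a finite subset of $H$. Together with transitivity of the $G$-action on cone vertices and local finiteness of $\Gamma(G,S)$, this yields fineness at every vertex of $\hat\Gamma(G,\{H\},S)$.

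For fillability, I would take a finite presentation $\langle S\mid R\rangle$ of $G$, form the universal cover of its presentation $2$-complex (a simply connected cocompact $G$-$2$-complex with $1$-skeleton $\Gamma(G,S)$), and attach a single $G$-orbit of cone $2$-cells along each coset of $H$ to obtain a simply connected cocompact $G$-$2$-complex $Y$ whose $1$-skeleton is $\hat\Gamma(G,\{H\},S)$. The uniformly bounded combinatorial structure of $Y$ provides a uniform bound on the coarse isoperimetric function $f_k^{\hat\Gamma(G,\{H\})}$ for some fixed $k$, establishing fillability. Combining both properties and applying \Cref{thmx:ConefOff} then concludes the proof.

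The main obstacle I foresee is the translation step in the fineness argument: relating paths in $\Gamma(G,X\sqcup H)$ that avoid $H$-edges (on which the relative metric $\hat d$ is defined) to paths in $\Gamma(G,S)$ with respect to a finite generating set, since the alphabet $X$ furnished by the definition of hyperbolic embeddedness need not be finite nor contained in $S$. Handling this will likely require either appealing to a reduction in~\cite{DGOsin2017} that allows one to replace $X$ by (or absorb it into) $S$ without losing local finiteness of $\hat d$, or else a direct geometric comparison between the two graphs.
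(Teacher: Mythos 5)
There is a genuine gap, and it sits in the fillability half of your plan. Your route is to feed the \emph{converse} direction of \Cref{thmx:ConefOff} (fine and fillable $\Rightarrow$ relatively finitely presented and $\Delta_{G,H}$ well defined), so the fillability step is doing real work: it is what replaces a proof of relative finite presentability. But the complex $Y$ you describe is not shown to exist in the form you need. Adding the cone vertices $gH$ and the cone edges to the universal cover of the presentation complex of $\langle S\mid R\rangle$ destroys simple connectivity: attaching a star over an infinite coset creates infinitely many new loops of the form $gh \to gH \to gh' \to (\text{path in }\Gamma(G,S)) \to gh$, and ``a single $G$-orbit of cone $2$-cells along each coset'' does not specify attaching maps that kill them. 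To kill them with finitely many $G$-orbits of $2$-cells of uniformly bounded boundary length you need $H$ to be generated by its elements of bounded $S$-length, i.e.\ you need $H$ to be finitely generated --- and that is exactly the nontrivial input you never establish. (It is true, via \cite[Corollary~4.32]{DGOsin2017}, and it is precisely what the paper uses: finite generation of $H$ plus \Cref{lem:finpresiff} gives relative finite presentability directly, after which only fineness and the forward equivalence \Cref{thmx:ConefOff}\eqref{thmx:ConefOff:2} are needed, so fillability never has to be proved by hand.) Separately, even granting a cocompact simply connected $Y$, the assertion that its ``uniformly bounded combinatorial structure'' bounds $f^{\hat\Gamma}_k$ is not justified as written: in this paper finiteness of the coarse isoperimetric function is deduced from finiteness of the combinatorial Dehn function (\Cref{prop:deltaX:vs:fN}), which in turn needs fineness (\Cref{thm.relDehn2complex}); uniformly bounded cells alone do not bound the \emph{number} of cells in minimal fillings uniformly over all loops of a given length when the graph is not locally finite.

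The fineness half of your argument is essentially sound and close to the paper's, but note two points. First, a shortest path realizing the angle $\angle_H(h,h')$ lives in $\hat\Gamma(G,\{H\},S)\setminus\{H\}$ and may pass through other cone vertices $gH\neq H$, not only through $\Gamma(G,S)$; the comparison with the relative metric $\widehat d$ still works because such a passage translates into an $H$-labelled edge whose endpoints lie outside $H$, which is admissible, but your decomposition as stated omits this. Second, the obstacle you flag about $X$ versus $S$ is resolved exactly as you guess: by \cite[Corollary~4.27]{DGOsin2017} one may enlarge $X$ to contain $S$, so $\hat\Gamma(G,H,S)$ is a subgraph of $\hat\Gamma(G,H,X)$, and fineness at cone vertices passes to subgraphs; the paper does this and then quotes \Cref{prop:q2} (from \cite{MPR2021}) instead of re-deriving the angle estimate from local finiteness of $\widehat d$, while non-cone vertices are fine simply because they have finite degree. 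So your fineness argument, once repaired, reproduces the paper's; the missing ingredient is finite generation of $H$ and, with it, relative finite presentability --- without which the fillability construction does not get off the ground.
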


 In the context of \Cref{thmx:last}, the relative Dehn function $\Delta_{G,\calp}$ is bounded from above by a linear function if and only if $G$ is hyperbolic relative to $H$, see~\cite{Osin06}. It is well known that the class of pairs $(G,H)$ such that $H\hookrightarrow_h G$ properly extends relative hyperbolicity, for examples see~\cite{DGOsin2017}. 

In a preliminary version of this manuscript, we asked whether there exist pairs $(G,\calp)$ such that $\Delta_{G,\calp}$ is well-defined, but $\calp$ is not hyperbolically embedded in $G$. In this regard, consider the Baumslag-Solitar groups  $\mathrm{BS}(k,l)=\langle a, t\ |\ ta^kt^{-1}=a^l\rangle$, where $k,l \in \Z\setminus\{0\}$. In  \Cref{prop.BS.nwd} we show that $\mathrm{BS}(k,l)$ does not have a well-defined Dehn function with respect to the cyclic subgroup generated by the stable letter $t$ if either $k$ divides $l$ or $l$ divides $k$.  On the other hand, in the appendix Ashot Minasyan shows that the converse holds, that is, if neither $k \nmid l$ nor $l \nmid k$ then $\Delta_{\mathrm{BS}(k,l),\langle t\rangle}$ is well-defined.

\subsection*{Acknowledgements}
The authors thank Ashot Minasyan for comments on an earlier version of the manuscript, and  for pointing out a necessary correction in the proof of \Cref{thmx:last}. The authors also thank Anthony Genevois for comments and pointing us to his work with Tessera, see \Cref{ex:wreath}. 
The first author would like to thank his PhD supervisor Professor Ian Leary. The second author thanks Noel Brady for discussions on the topics of the article. The first author was supported by the Engineering and Physical Sciences Research Council grant number 2127970. The second author acknowledges funding by the Natural Sciences and Engineering Research Council of Canada, NSERC. The third author was supported by grant PAPIIT-IA101221.
All three authors would like to thank the organisers of the online seminar Algebra at Bicocca, without which this collaboration would not have happened. 
We thank the anonymous referee for a number of helpful comments.

\section{Combinatorial Dehn functions and fine graphs}
The goal of this section is to prove Theorem~\ref{thmx.relDehn2complex}.
We use the notion of disk diagram in a combinatorial complex, for definitions see for example~\cite{McW02}.
We begin by recalling the definition of a combinatorial Dehn function, then we prove each direction of \Cref{thmx.relDehn2complex} individually as \Cref{lem.relDehn.1.if} and \Cref{lem.relDehn.1.onlyif}.  Note that \Cref{lem.relDehn.1.onlyif} does not require the hypothesis of finite edge stabilisers.

Suppose $X$ is a combinatorial $2$-complex and let $c:S^1\to X$ be a closed path in $X^{(1)}$ that is null-homotopic in $X$. Then there is $D$ a disk diagram $i:D^2\to X$ \emph{spanning $c$}, that is, $i$ is a combinatorial map and $i(\partial D^2)=c$.  Let $\mathrm{Area}(D)$ denote the number of faces of $D$ and define \[\delta_X(c):=\min\{\mathrm{Area}(D)\colon D\text{ is a disk spanning } c\},\]
the \emph{combinatorial Dehn function $\Delta_X$ of $X$ is defined to be}
\[\Delta_X(n):=\max\{\delta_X(c)\colon c \text{ is a closed path in }X^{(1)}, \text{ null-homotopic in } X, \text{ with } |c|\leq n\}. \]

Unless otherwise stated all graphs in this article are assumed to be simplicial.  A \emph{circuit} in a simplicial graph is an embedded close path.  We recall the following definition due to Bowditch \cite[Proposition~2.1]{Bo12}.  A graph $\Gamma$ is \emph{fine} if for every $n\geq 0$ and every edge $e$ in $\Gamma$ there are finitely many circuits of length less than or equal to $n$ which contains $e$.

\begin{theorem}[Theorem~\ref{thmx.relDehn2complex}]\label{thm.relDehn2complex}
Let $X$ be a cocompact simply connected combinatorial $G$-$2$-complex with finite edge stabilisers.   The combinatorial Dehn function $\Delta_X$ of $X$ takes only finite values if and only if the $1$-skeleton of $X$ is a fine graph.
\end{theorem}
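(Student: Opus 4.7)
The proof splits naturally into the two implications, stated later in the paper as \Cref{lem.relDehn.1.if} and \Cref{lem.relDehn.1.onlyif}. The plan is to show that both directions rest on one structural observation: under the standing hypotheses, every edge $e$ of $X$ lies in only finitely many $2$-cells of $X$. This follows since there are finitely many $G$-orbits of faces, each face has finitely many boundary edges, and for each face $f$ and each boundary edge $e'$ of $f$ the set $\{g\in G : g\cdot e' = e\}$ is either empty or a coset of the finite group $G_{e'}$; quotienting by $G_f$ still gives a finite number of $G$-translates of $f$ containing $e$.

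For the direction ($X^{(1)}$ fine $\Rightarrow$ $\Delta_X$ finite), fix $n\in\NN$. Choose finitely many representatives $e_1,\dots,e_k$ of the $G$-orbits of edges; this is possible by cocompactness. Every circuit $c$ of length at most $n$ contains some edge, so after translating by a suitable $g\in G$ we may assume $g\cdot c$ contains one of the $e_i$. By fineness of $X^{(1)}$, the set of circuits of length at most $n$ containing a fixed $e_i$ is finite, hence the set of circuits of length at most $n$ in $X$ is finite up to the $G$-action. Since $X$ is simply connected each such circuit admits a disk diagram of finite area, and since $G$ acts by combinatorial automorphisms the area is $G$-invariant, so $\Delta_X(n)$ is the maximum over a finite set and therefore finite.

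For the harder direction ($\Delta_X$ finite $\Rightarrow X^{(1)}$ fine), fix an edge $e$, an integer $n\geq 0$, and set $K := \Delta_X(n)$. Any circuit $c$ of length at most $n$ containing $e$ bounds a disk diagram $D_c\to X$ with at most $K$ faces. The plan is to show that the images of all such $D_c$ are contained in a single finite subcomplex $Y_{e,K}\subseteq X$. Define $E_0:=\{e\}$, let $F_i$ be the set of faces of $X$ with at least one boundary edge in $E_i$, and let $E_{i+1}$ be the union of $E_i$ with the edges of faces in $F_i$. By the observation in the first paragraph, $F_0$ is finite; iterating, each $E_i$ and $F_i$ is finite. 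Because the dual face-adjacency graph of $D_c$ is connected and has at most $K$ vertices, any face of $D_c$ is reachable from the face incident to (the lift of) $e$ in at most $K$ steps, so the image of $D_c$ lies in the finite subcomplex $Y_{e,K}$ spanned by $F_K$. There are only finitely many circuits of length at most $n$ in the finite graph $Y_{e,K}^{(1)}$ through $e$, establishing fineness.

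The main obstacle is the forward direction; everything turns on the lemma that each edge of $X$ meets only finitely many faces, which is the only point where the finite edge stabiliser hypothesis is used (and, as the theorem statement notes, is not needed for the reverse implication).
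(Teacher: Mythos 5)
Your proposal is correct and follows essentially the same route as the paper: the same split into the two lemmas, the same key input that each edge of $X$ lies in only finitely many $2$-cells (the only place finite edge stabilisers are used), and the same translate-to-finitely-many-orbits-plus-fineness argument for the direction ``fine $\Rightarrow$ $\Delta_X$ finite''. The only difference is cosmetic: in the harder direction the paper counts ``golden'' disks (fillings whose faces can be ordered so that each new face meets the union of the previous ones along an edge), whereas you trap the image of a filling with at most $K=\Delta_X(n)$ faces inside the finite iterated face-neighbourhood $Y_{e,K}$ of $e$; your appeal to connectivity of the dual face-adjacency graph of the diagram plays exactly the role of the paper's goldenness claim, both resting on the fact that a circuit is embedded, so its disk fillings are honest discs whose boundary edges lie on faces.
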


The next three lemmas prove the theorem.  The method is essentially a van Kampen diagram approach to the proof of \cite[Theorem~1.3]{MP16}.  The first lemma is a triviality.

\begin{lemma}
Let $X$ be a cocompact simply connected combinatorial $G$-$2$-complex with finite edge stabilisers, then each edge is contained in finitely many $2$-cells.
\end{lemma}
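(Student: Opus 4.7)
The plan is to use cocompactness to reduce the count of $2$-cells containing a fixed edge $e$ to a finite bookkeeping problem, and then use the finite edge stabiliser hypothesis to bound each contribution.

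First I would choose a finite set of $G$-orbit representatives $f_1,\dots,f_k$ of $2$-cells of $X$; this is possible because $X/G$ has finitely many $2$-cells by cocompactness. Every $2$-cell $f$ containing $e$ is of the form $g\cdot f_i$ for some index $i$ and some $g\in G$, and then $g^{-1}e$ is an edge of the combinatorial $2$-cell $f_i$. Since $f_i$ has only finitely many edges, it suffices to fix an index $i$ and an edge $e'$ of $f_i$ and bound the number of distinct $2$-cells of the form $g\cdot f_i$ with $g\cdot e'=e$.

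Next I would observe that the set $T_{i,e'}=\{g\in G:g\cdot e'=e\}$ is either empty or a left coset of $\mathrm{Stab}_G(e')$, which is finite by hypothesis. Two elements of $T_{i,e'}$ yield the same $2$-cell of $X$ precisely when they lie in the same left coset of $\mathrm{Stab}_G(f_i)$. Hence the number of distinct $2$-cells $g\cdot f_i$ containing $e$ with $g\cdot e'=e$ is at most $|\mathrm{Stab}_G(e')|<\infty$.

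Summing over the finitely many choices of $i$ and of edges $e'$ of $f_i$ gives a finite upper bound on the number of $2$-cells containing $e$. The only step that even requires a moment's thought is the passage from elements of $T_{i,e'}$ to distinct $2$-cells, but this is straightforward from the orbit--stabiliser relation; no genuine obstacle arises.
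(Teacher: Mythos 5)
Your argument is correct: the paper dismisses this lemma as ``a triviality'' and gives no proof, and your orbit--stabiliser argument (finitely many $G$-orbits of $2$-cells by cocompactness, then for each orbit representative and each of its finitely many edges the relevant translates form at most one left coset of a finite edge stabiliser) is exactly the intended justification. No gaps; the passage from elements of $T_{i,e'}$ to $2$-cells indeed only needs that each element determines one $2$-cell, so the coset bound suffices.
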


The next lemma proves the ``only if'' direction of Theorem~\ref{thm.relDehn2complex}.

\begin{lemma}\label{lem.relDehn.1.if}
Let $X$ be a cocompact simply connected combinatorial $G$-$2$-complex with finite edge stabilisers.  If the combinatorial Dehn function $\Delta_X$ of $X$ is well-defined then $X^{(1)}$ is a fine graph.
\end{lemma}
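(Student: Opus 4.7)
The plan is to fix an edge $e$ of $X^{(1)}$ and an integer $n \geq 0$, and to show that only finitely many circuits of length at most $n$ contain $e$. Set $N = \Delta_X(n)$, which is finite by hypothesis. Since $X$ is simply connected, any such circuit $c$ is null-homotopic and therefore bounds a disk diagram $i \colon D \to X$ with at most $N$ faces; the circuit is recovered as $i(\partial D)$, so it suffices to bound the number of such pairs $(D, i)$ whose boundary passes through $e$.

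First, cocompactness of the $G$-action yields a uniform upper bound $M$ on the number of boundary edges of any $2$-cell of $X$, so each face of $D$ has at most $M$ boundary edges. A standard Euler-characteristic computation on the disk $D$ then bounds the number of edges and vertices of $D$ purely in terms of $N$, $M$, and $n$. Consequently only finitely many isomorphism classes of abstract combinatorial disks $D$, each equipped with a distinguished boundary edge $e_0$ that is required to map to $e$, can arise.

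For each such $(D, e_0)$ I would bound the number of combinatorial maps $i \colon D \to X$ with $i(e_0) = e$ by an inductive, face-by-face argument. After passing to a reduced diagram so that the dual adjacency graph of $D$ is connected, choose an ordering $f_0, f_1, \dots, f_k$ of the faces of $D$ such that $f_0$ contains $e_0$ on its boundary and every later $f_j$ shares at least one edge with some earlier face. By the preceding lemma there are only finitely many $2$-cells of $X$ containing $e$, giving finitely many choices for $i(f_0)$; that choice determines $i$ on every edge and vertex of $f_0$. Inductively, when placing $f_j$ its image must contain a prescribed edge of $X$, and the preceding lemma again gives finitely many candidate $2$-cells. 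Multiplying finitely many finite counts produces only finitely many maps $i$, and hence only finitely many circuits of length $\leq n$ through $e$.

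The main obstacle is the combinatorial bookkeeping for possibly singular disk diagrams: one must justify that the face-ordering step actually yields an edge-sharing traversal, either by first reducing to a diagram whose underlying space is a genuine $2$-disk (using a minimal-area choice and analysis of cut vertices) or by strengthening the induction to cover pinch points directly. One also must check that the ``finitely many images for $f_j$'' step is not disrupted when $f_j$ shares more than one edge with earlier faces; this only tightens the constraints and hence the bound still holds. This mirrors the scheme used for the homological statement in \cite[Theorem~1.3]{MP16}, adapted from cellular chains to van~Kampen diagrams.
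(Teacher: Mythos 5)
Your proposal is essentially the paper's own argument: the paper packages your edge-sharing, face-by-face ordering as the notion of a ``golden'' disk and performs the same count, combining the finitely-many-$2$-cells-per-edge lemma with the bound $\Delta_X(n)$ on the number of faces of a minimal filling. The obstacle you flag about singular diagrams dissolves here, since the domain is a genuine combinatorial structure on $D^2$ (per the paper's definition of disk diagram) and a circuit is an embedded cycle, so the face-adjacency graph through shared edges is automatically connected and your inductive traversal goes through.
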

\begin{proof}

Let $D$ be a cellular $2$-disc.  We say $D$ is \emph{golden} if $D$ has a an enumeration of its $2$-cells $f_1,...,f_k$ with the property that $\partial f_{i+1}$ contains a $1$-cell of the subcomplex induced by $f_1\cup\dots\cup f_i$, and there is a cellular map $D\to X$.

Let $R$ be a $2$-cell and $f_1:R\to X$, then a simple counting argument yields there are only finitely many golden disks with at most $n\geq0$ faces making the following diagram commute:
\[\begin{tikzcd}
R \arrow[d, tail] \arrow[r, "f_1"] & X \\
D \arrow[ru]                              &  
\end{tikzcd} \]

Observe that by taking the minimal area filling for a circuit $c$ of length $n$ in $X$ gives rise to a golden disk $D$ with at most $\Delta_X(n)$ many $2$-cells.  Now, there are only finitely many $2$-cells containing a given edge $e$, so by the previous paragraph there are only finitely many golden discs $D$ containing $e$ with at most $\Delta_X(n)$ many $2$-cells.  In particular, for each $n\geq0$ there are only finitely many circuits in $X$ of length less than or equal $n$ containing $e$.  It follows that $X^{(1)}$ is a fine graph.
\end{proof}

The next lemma proves the ``if'' direction of Theorem~\ref{thm.relDehn2complex}.  Note that we can drop the hypothesis of finite edge stabilisers.

\begin{lemma}\label{lem.relDehn.1.onlyif}
Let $X$ be a cocompact simply connected combinatorial $G$-$2$-complex. If $X^{(1)}$ is a fine graph then the combinatorial Dehn function $\Delta_X$ of $X$ is well-defined.
\end{lemma}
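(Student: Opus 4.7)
The plan is to combine fineness with cocompactness to reduce bounding $\Delta_X(n)$ to taking the maximum over a finite set. Simple connectedness of $X$ guarantees that every circuit $c$ in $X^{(1)}$ bounds some disk diagram and hence $\delta_X(c)$ is a finite integer; the real question is whether $\delta_X(c)$ stays bounded as $c$ ranges over all null-homotopic circuits of length at most $n$.

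First, using cocompactness of the $G$-action on $X$, pick a finite set of edge representatives $e_1,\ldots,e_k$ for the $G$-orbits of edges in $X^{(1)}$. By the fineness hypothesis, for each fixed $n\geq 0$ and each $i\in\{1,\ldots,k\}$, the set
\[
\mathcal{C}_{i,n} := \{ c \colon c \text{ is a circuit in } X^{(1)} \text{ with } e_i \subset c \text{ and } |c|\leq n\}
\]
is finite. Since $X$ is simply connected, each $c\in \mathcal{C}_{i,n}$ admits at least one disk diagram, so $\delta_X(c)<\infty$. Therefore the quantity
\[
M(n) := \max\bigl\{ \delta_X(c) \colon c \in \mathcal{C}_{i,n},\ 1\leq i\leq k \bigr\}
\]
is a finite integer.

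Next, I would verify $M(n)$ is an upper bound for $\Delta_X(n)$. Let $c$ be any null-homotopic circuit in $X^{(1)}$ with $|c|\leq n$. Pick any edge $e \subset c$; since the $G$-orbits of edges have representatives $e_1,\ldots,e_k$, there exist $g\in G$ and $i$ with $g\cdot e = e_i$. Then $g\cdot c$ is a circuit in $X^{(1)}$ of the same length $|c|\leq n$ containing $e_i$, so $g\cdot c \in \mathcal{C}_{i,n}$. Because $G$ acts on $X$ by combinatorial automorphisms, a disk diagram $i\colon D^2 \to X$ spanning $g\cdot c$ gives a disk diagram $g^{-1}\circ i \colon D^2 \to X$ spanning $c$ with the same number of faces, so $\delta_X(c) = \delta_X(g\cdot c)\leq M(n)$. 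Taking the maximum over all such $c$ yields $\Delta_X(n)\leq M(n)<\infty$, which is exactly what is needed.

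There is no real obstacle here; the only thing to be slightly careful about is the trivial edge-case in which some length $n$ admits no null-homotopic circuits through any $e_i$ (in which case one sets $M(n)=0$ by convention), and the observation that the $G$-action preserves combinatorial area because it acts by combinatorial automorphisms on the $2$-complex $X$. Note that this direction indeed uses only cocompactness, simple connectedness, and fineness of $X^{(1)}$, with no hypothesis on edge stabilisers, consistent with the remark preceding the lemma.
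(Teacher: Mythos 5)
Your argument is correct and follows essentially the same route as the paper: cocompactness gives finitely many edge orbits, fineness then gives finitely many $G$-orbits of circuits of length at most $n$, and $G$-invariance of combinatorial area yields a uniform finite bound. The paper's proof adds only one extra step, decomposing a general closed path without backtracks into circuits before filling, which your version does not need since the stated definition of $\Delta_X$ ranges over circuits.
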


\begin{proof}
Let $Y_n$ denote the set of circuits of length less than or equal to $n$ in $X$.  

\textbf{Claim:} \emph{$Y_n$ is a $G$-set with finitely many orbits.}

Let $\{e_1,\dots,e_r\}$ be edges representing the orbits of the $G$-action on $X^{(1)}$.  Every circuit in $X$ of length less than or equal to $n$ can be translated to contain some $e_i$, the claim now follows from fineness of $X^{(1)}$. $\blackdiamond$

Let $A_n$ be an upper bound for the area of a circuit of length less than or equal to $n$ in $X$, this is well-defined by the previous claim.  Let $\gamma$ be a closed path without backtracks in $X$, then $\gamma$ can be expressed as a concatenation of closed paths $\gamma_1\dots\gamma_k$, such that $1\leq k\leq\Len(\gamma)$, for $i=1,\dots,k$ we have $\Len(\gamma_i)\leq\Len(\gamma)$ and each $\gamma_i$ is a circuit.  Now, filling each $\gamma_i$ we have
\[\Area(\gamma)\leq \sum_{i=1}^k \Area(\gamma_i)\leq \sum_{i=1}^kA_{\Len(\gamma_i)}\leq kA_{\Len(\gamma)}\leq\Len(\gamma)A_{\Len(\gamma)}. \]
This yields a finite upper bound for $\Delta_X(\ell)$ and so we conclude that $\Delta_X$ is well-defined.
\end{proof}

\begin{remark}
One can define a combinatorial Dehn function of a $2$-complex using circuits instead of arbitrary closed paths. Let us call this function $\Delta_X^{\rm circ}$. In this case, 
\[ \Delta_X^{\rm circ} (n) \leq \Delta_X(n) \leq \overline{\Delta_X^{\rm circ}}(n) \]
where $\overline{\Delta_X^{\rm circ}}$ is the superadditive closure of $\Delta_X^{\rm circ}$. We do not know whether $\Delta_X^{\rm circ} (n)$ is equivalent to $\Delta_X(n)$. This resembles a conjecture of Mark Sapir of whether the Dehn function of a finitely presented group is equivalent to a superadditive function (see~\cite{GubaSapir}).
\end{remark}

\section{Coarse isoperimetric functions}\label{sec:coarse:isop.functions}
To prove quasi-isometry invariance we will use the less general version of $\epsilon$-fillings for graphs and $2$-complexes defined in \cite{Osin06}.  The original definition, set up for essentially arbitrary metric spaces, can be found in \cite[Chapter~III.H.2]{BridsonHaefligerBook}.  The main result of this section is Proposition~\ref{prop:deltaX:vs:fN} - a generalisation of a result of Osin \cite[Theorem~2.53]{Osin06} alluded to in the introduction.

Let $X$ be a $2$-complex.  A \emph{singular combinatorial loop} $c:S^1\to X$ is a combinatorial structure on $S^1$ and a continuous map such that for every open cell of $S^1$, either $f|_e$ is a homeomorphism onto an open cell of $X$, or else $f(e)$ is contained in the $0$-skeleton of $X$.

Let $c$ be a combinatorial cycle in $X$.  An \emph{$\epsilon$-filling} of $c$ is a pair $(P,\Phi)$ consisting of a triangulation $P$ of a $2$-disc $D^2$ and a singular combinatorial map $\Phi:P^{(1)}\to X^{(1)}$, such that $\Phi|_{S^1}=c$ and the image under $\Phi$ of each face of $P$ is a set of diameter at most $\epsilon$.  Define $|\Phi|$ to be the number of faces of $\Phi$ and
\[\Area_\epsilon(c):=\min\{|\Phi|\ \colon \ \phi \text{ an $\epsilon$-filling of $c$}\}. \]
The \emph{coarse isoperimetric function} of $X$ is then defined to be
\[f^X_\epsilon(\ell):=\sup\{\Area_\epsilon(c)\ \colon\ \Len(c)\leq \ell\}. \]


\begin{definition}\label{def:asymp:relation}
For two functions $f,g\colon \NN\to \NN$, we say that $f$ is \emph{asymptotically less than} $g$, and we write $f\preceq g$ if there exist constants $C,K,L\in \NN$ such that
\[f(n)\leq Cg(Kn)+Ln.\]
Further we say $f$ is \emph{asymptotically equivalent} to $g$, and write $f \asymp g$ if $f\preceq g$ and $g\preceq f$.
\end{definition}

\begin{prop}\label{prop:deltaX:vs:fN}
Let $X$ be a cocompact simply connected combinatorial $G$-$2$-complex.  If $\Delta_X$ takes only finite values, then for $N\in\NN$ large enough $f^X_N$ takes only finite values and $f^X_N\asymp \Delta_X$.
\end{prop}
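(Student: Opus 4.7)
The plan is to prove both inequalities $\Delta_X \preceq f^X_N$ and $f^X_N \preceq \Delta_X$, taking $N$ to exceed the maximum boundary length $L$ of a $2$-cell of $X$, which is finite by cocompactness. Finiteness of $f^X_N$ will follow from the construction in the second inequality.

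For $\Delta_X \preceq f^X_N$, I would take an $N$-filling $(P,\Phi)$ of a null-homotopic circuit $c$ with $|\Phi| = \Area_N(c)$. For each triangle $T$ of $P$, the image $\Phi(\partial T)$ is a null-homotopic closed combinatorial path in $X^{(1)}$ of length at most $3$, since each edge of $T$ maps either onto an edge of $X$ or to a vertex. Setting $A$ to be the maximum of $\delta_X(c')$ over null-homotopic closed paths $c'$ of length at most $3$ (finite because $\Delta_X$ takes only finite values and backtracking loops fill trivially), each such triangle-boundary loop spans a combinatorial disk diagram of area at most $A$. Gluing these along the $1$-skeleton of $P$ yields a disk diagram spanning $c$ of area at most $A\cdot|\Phi|$, giving $\Delta_X(\ell) \leq A\cdot f^X_N(\ell)$.

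For $f^X_N \preceq \Delta_X$, I would start with a minimum-area disk diagram $i\colon D\to X$ spanning $c$ and refine each $2$-cell $f$ of $D$ into a triangulation by a \emph{staircase} construction. For a face $f$ with $m$-gon boundary $v_1,\dots,v_m$ (where $m \leq L$), add an interior vertex $w_0$ mapping to $i(v_1)$ and subdivide each radial edge $w_0 v_k$ into a combinatorial path in $P$ whose images trace the boundary of $i(f)$ from $i(v_1)$ to $i(v_k)$. The rails to consecutive boundary vertices $v_k$ and $v_{k+1}$ then share vertices at equal heights whose $X$-images coincide, so the interstitial region can be triangulated using degenerate edges mapping to a single vertex, together with non-degenerate edges mapping to a single edge of $X$. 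Every resulting triangle has image of diameter at most $1$ in $X^{(1)}$ (hence at most $N$), and the number of triangles per face of $D$ is $O(L^2)$. This gives $\Area_N(c) \leq C\cdot \delta_X(c)$ for a constant $C$ depending on $L$, and hence $f^X_N \preceq \Delta_X$.

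The main obstacle is the combinatorial bookkeeping in the second direction: verifying that the staircase subdivision $P$ is a genuine triangulation of $D^2$, that the induced $\Phi\colon P^{(1)}\to X^{(1)}$ is singular combinatorial in the paper's sense, and that each face has image of diameter at most $N$. The flexibility of the singular combinatorial structure, particularly the permission of edges collapsing to a single vertex, is essential: it is precisely what allows the interstitial regions between adjacent radial paths to be triangulated without requiring non-existent adjacencies in $X^{(1)}$.
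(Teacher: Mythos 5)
The gap is in your first direction, $\Delta_X \preceq f^X_N$. You read ``singular combinatorial map'' so strictly that every edge of the triangulation $P$ maps to a single edge or a single vertex of $X$, whence every triangle boundary $\Phi(\partial T)$ is a closed path of length at most $3$, filled with area at most a constant $A\leq\Delta_X(3)$. Under that reading, however, the parameter $\epsilon$ in an $\epsilon$-filling is irrelevant once $\epsilon\geq 3$: $f^X_\epsilon$ degenerates to the combinatorial filling function of the complex obtained by coning off all closed paths of length at most $3$, ``fillable'' becomes the statement that this triangle-coned complex is simply connected, and the quasi-isometry invariance of \Cref{Prop:Bridson:qi:invariance} --- the whole point of introducing $f^X_\epsilon$ --- fails (compare the standard Cayley graph of $\ZZ^2$ with the one for the generating set including $(1,1)$). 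So that cannot be the operative definition. In the Osin/Bridson--Haefliger notion that the paper intends, and that its own proof uses, only the faces of $P$ are constrained to have image of diameter at most $N$; an edge of $P$ may map to a long path, so $\Phi(\partial T)$ need not have length $3$ and your constant $\Delta_X(3)$ is wrong. The repair is exactly the paper's normalization step: regard $(P,\Phi)$ as a $3N$-filling in which each vertex of $P$ goes to a vertex of $X$ and each edge to an edge path of length at most $N$ (subdividing $P$ as needed so that $\Phi$ is cellular); then each triangle boundary is a closed edge path of length at most $3N$, which you fill with area at most $\Delta_X(3N)$ --- finite by hypothesis --- and glue, obtaining $\Delta_X(\ell)\leq \Delta_X(3N)\,f^X_{3N}(\ell)$. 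With that correction this half of your argument coincides with the paper's.

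Your second direction, $f^X_N\preceq\Delta_X$, is fine and is where you genuinely differ from the paper: the paper barycentrically subdivides a minimal-area disk diagram twice, which is quicker but, taken literally, sends the new barycentric vertices into the interiors of $2$-cells rather than into $X^{(1)}$; your staircase subdivision is more careful on exactly this point, producing a triangulation whose edges map to single edges or vertices and whose faces have image of diameter at most $1$, at the harmless cost of $O(L^2)$ rather than $O(L)$ triangles per face. This also correctly delivers the finiteness of $f^X_N$.
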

\begin{proof}
Since $X$ is a cocompact $G$-$2$-complex there are only finitely many $G$-orbits of $2$-cells in $X$.  Let $\{D_1,\dots,D_n\}$ denote a representative set of orbits and let $N$ be an integer greater than the maximum diameter of each disc $D_i$ for $i=1,\dots,n$.

First, we will show $f^X_N\preceq\Delta_X$.
Let $c:S^1\to X$ be a singular combinatorial loop. 
Let $\Phi:D\to X$ be a disk diagram of minimal area that fills $c:S^1\to X$.  Barycentric subdividing $D$ twice to obtain $D''$ yields a simplicial disk such that the image of each face in $X$ has diameter less than $N$, i.e. $(D'',\Phi)$ is an $N$-filling of $c$.  It follows that $\Area_\epsilon(c)\leq 12N\Delta_X(\Len(c))$.  In particular, $f^X_N\preceq\Delta_X$.

It remains to show that $\Delta_X\preceq f^X_N$.  Consider an $N$-filling $(P,\Phi)$ of a combinatorial loop $c$ in $X^{(1)}$.  Considering $(P,\Phi)$ as a $3N$-filling we may assume that each $0$-cell of $P$ maps to a $0$-cell of $X$ and each $1$-cell of $P$ maps to an edge path in $X$ of length at most $N$.  Thus, after subdividing $P$ at most $N$ times we may assume that $\Phi$ is cellular on $P^{(1)}$.  For each $2$-cell of the subdivided $P$, its boundary map determines a cellular loop in $X$ with length bounded by $3N$.  Now, we fill each such loop with some disc diagram $D\to X$ to obtain a diagram for $c$ in $X$ which has area at most $\Delta_X(3N)f_{3N}^X(\Len(c))$.  In particular we conclude that $\Delta_X(\Len(c))\preceq\Delta_X(3N)f_{3N}^X(\Len(c))$. 
\end{proof}

A connected graph $\Gamma$ is \emph{fillable} if, when considering $\Gamma$ with the length metric obtained by regarding each edge as an segment  of length one, there is an integer $k$ such that the coarse isoperimetric function $f_k^\Gamma$ takes only finite values. 

\begin{proposition}\emph{\cite[Proposition~III.H.2.2]{BridsonHaefligerBook}}\label{Prop:Bridson:qi:invariance}
If $\Gamma$ and $\Gamma'$ are quasi-isometric connected graphs such that $\Gamma$ is fillable, then $\Gamma'$ is fillable  and $f_k^{\Gamma}\asymp f_k^{\Gamma'}$ for large enough $k$.
\end{proposition}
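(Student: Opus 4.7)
The plan is the standard "pull back, fill in $\Gamma$, push forward to $\Gamma'$" argument, adapted to the setting of coarse combinatorial $\epsilon$-fillings. Let $q\colon \Gamma \to \Gamma'$ be an $(L,C)$-quasi-isometry with $(L,C)$-quasi-inverse $\bar q\colon \Gamma' \to \Gamma$, and fix an integer $k$ so that $f_k^\Gamma$ takes only finite values. I would then choose a target threshold $k' = k'(L,C,k)$ sufficiently large, in a sense to be determined by the construction. By symmetry it suffices to show $f_{k'}^{\Gamma'} \preceq f_k^\Gamma$; the reverse inequality and the fillability of $\Gamma'$ follow by interchanging the roles of $\Gamma$ and $\Gamma'$ via $\bar q$.

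Given a combinatorial loop $c'$ in $\Gamma'$ of length $\ell$, I would enumerate its vertices $v_0', v_1', \dots, v_\ell' = v_0'$ and form the loop $c$ in $\Gamma$ by concatenating fixed geodesic segments from $\bar q(v_i')$ to $\bar q(v_{i+1}')$. Then $c$ has length at most $(L+C)\ell$, so by the assumption on $f_k^\Gamma$ there is a $k$-filling $(P,\Phi)$ of $c$ with at most $f_k^\Gamma((L+C)\ell)$ faces. To push $\Phi$ forward, note that $q\circ\Phi$ sends the endpoints of every edge of $P$ to vertices of $\Gamma'$ at distance at most $Lk + C$; subdividing each edge of $P$ at most $Lk + C$ times and extending along a fixed choice of $\Gamma'$-geodesic yields a combinatorial map $\Phi'\colon P' \to \Gamma'^{(1)}$ whose $2$-cells all have diameter at most some constant $k_0 = k_0(L,C,k)$, at the cost of multiplying the face count by a bounded factor depending only on $L$, $C$, and $k$.

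The boundary of $\Phi'$ is $q\circ c$ rather than $c'$, so I would close the gap with a collar. For each $i$ there is an edge path in $\Gamma'$ of length at most $C$ between $v_i'$ and $q\circ\bar q(v_i')$, and the loop formed from the edge $v_i' v_{i+1}'$, the two such connecting paths, and the $q$-image of the $i$-th geodesic segment of $c$ has length bounded by some $M = M(L,C)$. Provided $k'$ is chosen at least $\max\{k_0, M\}$, each of these $\ell$ short loops can be filled by coning to an interior vertex, producing $k'$-fillings of area at most a uniform constant $A = A(L,C)$. Attaching these collar fillings to $\Phi'$ gives a $k'$-filling of $c'$ of area at most $C' f_k^\Gamma((L+C)\ell) + A\ell$, which is exactly the inequality $f_{k'}^{\Gamma'} \preceq f_k^\Gamma$ in the sense of Definition~\ref{def:asymp:relation}.

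The main obstacle I expect is the bookkeeping around the collar: the constant $k'$ must be chosen \emph{before} the loop $c'$ is specified, yet must be large enough that every short "error" loop of length at most $M$ can be $k'$-filled with uniformly bounded area, independently of where it sits in $\Gamma'$. The observation that makes this work is that a combinatorial loop of length $n$ in a connected graph can always be filled by a triangulated disc of at most $n$ faces, each with image of diameter at most $n$, by coning to a vertex on the loop. So once $k'$ exceeds the finitely many bounds $M$, $Lk+C$, and $k_0$ determined by the quasi-isometry data and the choice of $k$, the argument is uniform in $c'$; this is precisely why the statement requires "for large enough $k$".
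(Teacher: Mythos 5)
Your proposal is correct in substance and is essentially the argument the paper relies on: the paper gives no proof of its own but cites Bridson--Haefliger, whose proof is exactly this pull-back--fill--push-forward construction with a collar of uniformly bounded loops closing the gap between $q\circ c$ and $c'$, and you correctly identify the one delicate point (choosing the threshold $k'$ uniformly before $c'$ is given). The only step left implicit is that your construction yields cross-scale inequalities, $f_{k'}^{\Gamma'}\preceq f_k^{\Gamma}$ and, by symmetry, $f_{k''}^{\Gamma}\preceq f_{k'}^{\Gamma'}$ with $k''\geq k'\geq k$, so to obtain the stated same-scale equivalence $f_k^{\Gamma}\asymp f_k^{\Gamma'}$ for all large $k$ you still need the standard observation that within a single fillable graph the coarse isoperimetric functions at any two scales above the fillability threshold are equivalent (refine each face of a $k''$-filling, whose boundary gives a loop of length at most $3k''$, by a $k$-filling of area at most $f_k(3k'')$).
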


\begin{remark}\label{rem.fillable}
If a connected graph $\Gamma$ is fillable, then there is a positive integer $m$ such that the complex obtained by attaching 2-cells to all circuits of length less than or equal to $m$ is simply connected.
\end{remark}

\section{Relative Dehn functions of groups}

\begin{definition}[Finite relative presentation]
Let $G$ be a group, $\calp$ an arbitrary collection of subgroups of $G$, and let $S$ be a subset of $G$. We say that $G$ is \emph{generated by $S$ relative to $\calp$} if $G$ is generated by the set $\cals=S\sqcup \bigsqcup_{P\in\calp}(P-\{1\})$, equivalently, the natural homomorphism
\begin{equation}\label{eq:rel:presentation}
    F=F(S)\ast \bigast_{P\in \calp}P\to G
\end{equation}
is surjective. In the case that $S$ is finite, $G$ is relatively finitely generated with respect to $\calp$. 

Let $R\subseteq F$ be a set that normally generates the kernel of the above homomorphism, then we say
\begin{equation}\label{G:relative:presentation}
    G=\langle S,\calp\ |\ R \rangle
\end{equation}
is a \emph{ presentation of $G$ relative to $\calp$}. If both $S$ and $R$ are finite we say $G$ is \emph{relatively finitely presented with respect to $\calp$}, or just, \emph{relatively finitely presented} if the collection $\calp$ is clear from the context, and \eqref{eq:rel:presentation} is a \emph{relative finite presentation}.
\end{definition}

\begin{definition}[Relative Dehn function of a relative presentation]
Let $G=\langle S,\calp\ |\ R \rangle$ be a relative presentation. For a word $W$ over  the alphabet $\cals=S\sqcup \bigsqcup_{P\in\calp}(P-\{1\})$ representing the trivial element in $G$, there is an expression
\begin{equation}\label{eq:relation}W=\prod_{i=1}^k f_i^{-1}R_i f_i\end{equation}
where $R_i\in R$ and $f_i\in F$.

We say a function $f\colon \NN\to \NN$ is a \emph{relative isoperimetric function} of the presentation $G=\langle S,\calp\ |\ R \rangle$ if, for any $n\in \NN$, and any word $W$ as above of length $\leq n$, one can write $W$ as in \eqref{eq:relation} with $k\leq f(n)$. The smallest relative isoperimetric function of $G=\langle S,\calp\ |\ R \rangle$ is called the \emph{relative Dehn function of $G$ with respect to $\calp$}, and it is denoted $\Delta_{G,\calp}$.
\end{definition}

\Cref{def:asymp:relation} and \Cref{Thm:Osin:well:defined} below justify the notation $\Delta_{G,\calp}$ for the relative Dehn function of $G$ with respect to $\calp$.


\begin{theorem} \emph{\cite[Theorem 2.34]{Osin06}} \label{Thm:Osin:well:defined} 
Let $G$ be a finitely presented group relative to $\calp$. Let $\Delta_1$ and $\Delta_2$ be the relative Dehn functions associated to two finite relative presentations. If $\Delta_1$ takes only finite values, then $\Delta_2$ takes only finite values, and $\Delta_1\asymp \Delta_2$.
\end{theorem}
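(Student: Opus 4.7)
The plan is to adapt the classical Tietze transformation argument from the absolute case to the relative setting. Given two finite relative presentations $\langle S_1, \calp \mid R_1\rangle$ and $\langle S_2, \calp \mid R_2\rangle$ of the pair $(G, \calp)$ with associated Dehn functions $\Delta_1$ and $\Delta_2$, and assuming $\Delta_1$ is finite-valued, I would connect the two presentations by a finite chain of elementary Tietze moves and show that each such move preserves both finite-valuedness and the $\asymp$-equivalence class of the relative Dehn function.

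First I would establish a relative Tietze lemma: any two finite relative presentations of the same pair are linked by a finite sequence of two types of moves, namely (T1) adding or removing a redundant generator $t \in G$ together with a defining relator $tw_t^{-1}$, where $w_t \in F(S)\ast\bigast_{P\in\calp}P$ is a fixed word representing $t$; and (T2) adding or removing a relator that is already a consequence of the remaining relators. Concretely, one passes to the super-presentation $\langle S_1 \cup S_2, \calp \mid R_1 \cup R_2 \cup T_1 \cup T_2\rangle$, where $T_j$ contains a defining relator for each generator in $S_j \setminus S_{3-j}$ in terms of the other alphabet, reaching it from $\langle S_1, \calp \mid R_1\rangle$ by finitely many (T1)-moves followed by (T2)-moves, and reducing symmetrically from the super-presentation to $\langle S_2, \calp \mid R_2\rangle$.

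Next I would prove the stability of the Dehn function under each move. For (T2), fixing an expression $r = \prod_{i=1}^{k_0} g_i^{-1} R_i g_i$ witnessing $r$ as a consequence, adding $r$ can only decrease the function, while removing it multiplies each area by at most $k_0$. For (T1), let $\phi$ denote the retraction of the enlarged free product onto the original one sending $t \mapsto w_t$ and fixing the remaining generators. In one direction, any null word $W$ of length $n$ in the enlarged alphabet is equal in the new free product to $\phi(W)$ after at most $n$ applications of the new relator $tw_t^{-1}$, and $\phi(W)$ has length $\leq Cn$ with $C = |w_t|$ in the old alphabet, so an old filling of $\phi(W)$ extends to a new filling of $W$ of area $\leq n + \Delta_{\mathrm{old}}(Cn)$. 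In the other direction, applying $\phi$ to any new filling of an old null word preserves the number of conjugates, since $\phi(tw_t^{-1}) = 1$ and $\phi$ fixes old relators. Together these give $\Delta_{\mathrm{new}}(n) \leq n + \Delta_{\mathrm{old}}(Cn)$ and $\Delta_{\mathrm{old}}(n) \leq \Delta_{\mathrm{new}}(n)$, which is an $\asymp$-equivalence and transports finite-valuedness in both directions.

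The main obstacle, in contrast with the absolute case, is that finite-valuedness of the relative Dehn function is not automatic from the existence of a finite relative presentation, so it must be transported step by step through each Tietze move rather than rederived. The estimates above are designed precisely for this: each bounds the new Dehn value at $n$ by a finite multiple of an old Dehn value at a linearly scaled argument, so finiteness is inherited through the chain. A minor technical point is that the conjugators $f_i$ in expressions $W = \prod f_i^{-1} R_i f_i$ live in the free product $F = F(S)\ast\bigast_{P\in\calp}P$ and so may involve parabolic syllables of arbitrarily large group length; however, Osin's relative length counts each such syllable as a single letter, so the substitution estimates above are uniform in the size of parabolic syllables and the argument goes through without modification.
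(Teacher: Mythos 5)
This statement is quoted verbatim from Osin \cite[Theorem 2.34]{Osin06} and the paper offers no proof of its own, so the comparison is with Osin's argument: your Tietze-move scheme is correct and is essentially that standard argument — change of relative generating set via a fixed finite dictionary plus adding/removing redundant relators, with each elementary step giving bounds of the form $\Delta_{\mathrm{new}}(n)\leq n+\Delta_{\mathrm{old}}(Cn)$, $\Delta_{\mathrm{old}}(n)\leq \Delta_{\mathrm{new}}(n)$, or $\Delta_{\mathrm{old}}(n)\leq k_0\Delta_{\mathrm{new}}(n)$, which indeed transport both finite-valuedness and the $\asymp$-class along the chain. Your two flagged relative subtleties (parabolic letters count as single letters, and finiteness must be carried through each move rather than rederived) are exactly the right points, and they are handled correctly.
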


\begin{definition}
[Osin-Cayley graph and Osin-Cayley complex] Assume $G$ has a relative presentation as in \eqref{G:relative:presentation}. We call the Cayley graph $\Gamma(G,\cals)$ with $\cals=S\sqcup \bigsqcup_{P\in\calp}(P-\{1\})$ the \emph{Osin-Cayley graph} and we denote it by $\bar\Gamma(G,\calp,S)$.  Note that in general this graph is not simplicial.

For each $P\in\calp$, denote by $R_P$ the set of all words in the alphabet $P-\{1\}$ that represent the identity in $P$, that is, we have the presentation $P=\langle P-\{1\}\ |\ R_P\rangle$. Also we have the following presentation
\[F=\langle S, \bigsqcup_{P\in \calp}(P-\{1\})\ |\ \bigsqcup_{P\in\calp} R_P \rangle.\]

The Osin-Cayley complex $\bar X(G,\calp,S)$ is the 2-complex with 1-skeleton $\bar\Gamma(G,\calp,S)$ and  we attach:
\begin{itemize}
    \item One 2-cell for each loop labelled with a word in $R$, which we call from now on \emph{$R$-cells}.
    \item One 2-cell for each loop labelled by a word in $\bigsqcup_{P\in\calp} R_P$, which we call from now on \emph{$\calp$-cells}.
\end{itemize}
\end{definition}

\begin{remark}\label{rem:def_of_rel_D_f}
By \cite[Definition~2.31]{Osin06} the relative Dehn function $\Delta_{G,\calp}$ can be described as follows. For any combinatorial loop $\gamma\colon S^1\to \bar X(G,\calp,S)$, the relative area $\Area^{rel}(\gamma)$ of $\gamma$ is the number of $R$-cells in a minimal disk diagram for $\gamma$, where minimality is with respect to the number of $R$-cells. Then 
\[\Delta_{G,\calp}(n)=\max\{\Area^{rel}(\gamma)\colon\gamma \text{ is a loop in $\bar X(G,\calp,S)$  of length at most }n\}.\]
\end{remark}

\begin{definition}[Coned-off Cayley graph]\label{def:conedOffCayleyGraph}
Let $G$ be a group, let $\calp$ be an arbitrary collection of subgroups of $G$, and let $S$ be a generating set of $G$. Denote by $G/\calp$ the set of all cosets $gP$ with $g\in G$ and $P\in\calP$. The \emph{coned-off Cayley graph  of $G$ with respect to $\calp$} is the graph $\hat\Gamma(G,\calp,S)$ with vertex set $G\cup G/\calp$ and edges are of the following type
\begin{itemize}
    \item $\{g,gs\}$ for $s\in S$,
    \item $\{x, gP\}$ for $g\in G$, $P\in \calp$ and $x\in gP$.
\end{itemize}
We call vertices of the form $gP$ \emph{cone points}. 
\end{definition}

Note that $\hat\Gamma(G,\calp,S)$ contains the Cayley graph of  $G$ with respect to the generating set $S$, and the quasi-isometry type of 
$\hat\Gamma(G,\calp,S)$ is independent of the finite generating set $S$ of $G$. This justifies the notation $\hat\Gamma(G,\calp)$ that we use throught the article.

\begin{definition}[A natural quasi-isometry between $\bar\Gamma(G,\calp,S)$ and $\hat\Gamma(G,\calp,S)$]\label{def:varphi} Assume $G$ is generated by $S$ relatively to $\calp$. Let \[\varphi\colon \bar\Gamma(G,\calp,S)\to\hat\Gamma(G,\calp,S)\] be the map defined as follows.
Add a vertex at the midpoint of each edge  $e=\{g,gh\}$ of $\bar\Gamma(G,\calp, S)$  with $h\in P$, $p\in\calp$, and label in $P$. Consider the inclusion of the vertex set of $\bar\Gamma(G, \calp,S)$ into the vertex set of $\hat\Gamma=\hat \Gamma(G,\calp,S)$.  Observe that this map extends to a $G$-equivariant cellular map between  $\bar\Gamma(G,\calp, S)$ and $\hat \Gamma(G,\calp,S)$. Specifically, for an edge $e=\{g,gh\}$ with $h\in P$ and label in $P$ of $\bar\Gamma(G,\calp,S)$, the midpoint of $e$ maps to the vertex $gP$;  an edge $\{g,gs\}$ with label in $S$ is an edge that is  common to both  $\bar\Gamma(G, \calp,S)$ and $\hat \Gamma(G,\calp,S)$.
Observe that the map $\varphi\colon\bar\Gamma(G,\calp,S)\to \hat \Gamma(G,\calP,S)$ is indeed a $(1,1)$-quasi-isometry.
\end{definition}

\begin{proposition}\label{prop:Osin}
Let $G$ be a group and let $\calp$ be a collection of subgroups. If $\Delta_{G,\calp}$ is well-defined, then $\Delta_{G,\calp} \asymp f^{\hat\Gamma(G,\calp)}_N$ for all sufficiently large integers $N$.
\end{proposition}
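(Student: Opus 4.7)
The plan is to reduce Proposition~\ref{prop:Osin} to Proposition~\ref{prop:deltaX:vs:fN} applied to a cocompact simply connected $G$-$2$-complex built on top of $\hat\Gamma(G,\calp)$, exploiting the $(1,1)$-quasi-isometry $\varphi\colon\bar\Gamma(G,\calp,S)\to\hat\Gamma(G,\calp,S)$ of Definition~\ref{def:varphi}.

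Fix a finite relative presentation $\langle S,\calp\mid R\rangle$ witnessing that $\Delta_{G,\calp}$ is well defined. The first step is to construct the $G$-$2$-complex $\hat X$ by attaching to $\hat\Gamma(G,\calp,S)$ one $G$-orbit of $2$-cells for each $r\in R$, glued along the image under $\varphi$ of the corresponding relator loop in $\bar\Gamma(G,\calp,S)$. Since $R$ is finite, $\hat X$ is cocompact. For simple-connectivity I would use that the Osin--Cayley complex $\bar X(G,\calp,S)$ is simply connected: lift a loop $c$ in $\hat\Gamma$ to a loop $\bar c$ in $\bar\Gamma$ via a natural quasi-inverse of $\varphi$ that sends each cone point $gP$ to a chosen vertex in the coset $gP$, fill $\bar c$ by $R$-cells and $\calp$-cells in $\bar X$, and push the filling forward by $\varphi$. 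The key observation is that every $\calp$-cell has its entire boundary inside a single coset $gP$, so $\varphi$ sends it into the star of the cone point $gP$, which is a tree in $\hat\Gamma$; consequently only the $R$-cells survive as genuine $2$-cells of $\hat X$, and $c$ is null-homotopic in $\hat X$.

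Carrying out the same push-forward/lift argument quantitatively yields $\Delta_{\hat X}\asymp\Delta_{G,\calp}$: a disk diagram in $\bar X$ realising the minimal number of $R$-cells for a loop $\bar c$ pushes forward by $\varphi$ to a disk diagram in $\hat X$ for $\varphi(\bar c)$ whose area equals that number; conversely, a disk diagram in $\hat X$ for a loop $c$ lifts (via a quasi-inverse of $\varphi$ together with combinatorial bookkeeping using $\calp$-cells to bridge differing choices of coset representatives at shared cone points) to a disk diagram in $\bar X$ for the lifted loop $\bar c$ with the same number of $R$-cells. Since the coarse isoperimetric function $f^X_N$ depends only on $X^{(1)}$, we have $f^{\hat X}_N = f^{\hat\Gamma(G,\calp,S)}_N$ for every $N$. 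Applying Proposition~\ref{prop:deltaX:vs:fN} to $\hat X$ now gives $\Delta_{\hat X}\asymp f^{\hat X}_N$ for all sufficiently large $N$, and combining with $\Delta_{\hat X}\asymp\Delta_{G,\calp}$ produces the desired $\Delta_{G,\calp}\asymp f^{\hat\Gamma(G,\calp)}_N$.

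The main obstacle is the quantitative comparison $\Delta_{\hat X}\asymp\Delta_{G,\calp}$. Tracking loop lengths across the $(1,1)$-quasi-isometry $\varphi$ contributes at worst a factor of two; the more delicate point is that a disk diagram in $\hat X$, when lifted to $\bar X$, may require the insertion of $\calp$-cells to reconcile the different coset representatives chosen at shared cone-point vertices of adjacent $R$-cells, and one must verify that these insertions neither destroy $R$-cell minimality nor inflate the count. Any small boundary-length discrepancies introduced by these adjustments are absorbed by the additive term in the definition of $\asymp$.
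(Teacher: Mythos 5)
Your proposal is correct, but it takes a genuinely different route from the paper. The paper's proof of \Cref{prop:Osin} is a one-liner: it quotes Osin's result \cite[Theorem~2.53]{Osin06}, which identifies $\Delta_{G,\calp}$ with the coarse isoperimetric function of the Osin--Cayley graph $\bar\Gamma(G,\calp,S)$, and then transfers this to $\hat\Gamma(G,\calp,S)$ via the $(1,1)$-quasi-isometry $\varphi$ of \Cref{def:varphi} together with the quasi-isometry invariance of coarse isoperimetric functions (\Cref{Prop:Bridson:qi:invariance}). You instead reprove the substance of Osin's theorem in the coned-off setting: you build the coned-off Cayley complex $\hat X(G,\calp,S)$, establish $\Delta_{G,\calp}\asymp\Delta_{\hat X}$ by pushing forward and lifting disk diagrams through $\varphi$ (this is exactly the content of \Cref{rem:diagrams} and \Cref{lem:dehnfunction:connedoff:cayley}, which appear later in the paper and are independent of \Cref{prop:Osin}, so there is no circularity), and then apply \Cref{prop:deltaX:vs:fN} to $\hat X$, using that the $\epsilon$-filling data lives entirely in the $1$-skeleton so that $f^{\hat X}_N$ agrees with $f^{\hat\Gamma}_N$. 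This buys a self-contained argument that avoids the black box, at the cost of redoing the diagram bookkeeping the paper delegates to Osin; the paper's route buys brevity. Two small points to tidy: the lift of a loop through a cone point $gP$ should replace the two cone edges $x$--$gP$--$y$ by the single $P$-labelled edge from $x$ to $y$ in $\bar\Gamma$ (after removing backtracks), rather than ``sending the cone point to a chosen coset representative''; and the application of \Cref{prop:deltaX:vs:fN} needs finitely many $G$-orbits of $2$-cells in $\hat X$, which is guaranteed by finiteness of $R$ (and cocompactness of $\hat X$ itself uses finiteness of $\calp$, which is the standing assumption of the paper). Neither issue affects the validity of your argument.
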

\begin{proof}
This is a re-statement of Osin's result~\cite[Theorem 2.53]{Osin06} modulo the fact that $\hat\Gamma(G,\calp, S)$ and the Cayley graph $\bar\Gamma(G,\calp,S)$ are quasi-isometric graphs, see \Cref{def:varphi}. 
\end{proof}

\begin{proposition}\label{prop:Referee}
Let $G$ be a group, $\calp$ be a collection of subgroups, and $S$ a  relative generating set.  Let $\tilde G$ be the free product $F(S)\ast \bigast_{P\in \calp}  P$,  and consider the short exact sequence, 
\[1 \to N \hookrightarrow \tilde G \xrightarrow{\varphi} G \to 1 \]
where $\varphi$ is  the homomorphism induced by the inclusion $S\cup \bigcup_{i=1}^n P_i$ into $G$, and $N$ is the kernel of $\varphi$. Then the coned-off Cayley graph $\hat\Gamma=\hat\Gamma(G,\calp, S)$ is connected and for any vertex $x_0$ of $\hat\Gamma$,  there is a group isomorphism
\[ N\to \pi_1(\hat\Gamma, x_0),\qquad g\mapsto [\gamma_g],\]
where $\gamma_g$ is an combinatorial closed path in $\hat\Gamma$ based at $x_0$.
\end{proposition}
\begin{proof}
Consider the splitting of $\tilde G$ as the fundamental group of the graph of groups $\mathbf{Y}$ that consists of a vertex $v$ labelled with the trivial group, one vertex $v_P$ labelled with each $P\in \calP$ respectively, one edge $e_P$ that joins $v$ with $v_P$ for each $P\in \calp$ labelled with the trivial group, and one edge loop $e_s$ based at $v$ for each $s\in S$ labelled with the trivial group. 

Let $\calt$ be the Bass-Serre tree of $\mathbf{Y}$, see \cite{SerreTrees}. 
Since each subgroup $P$ of $\tilde G$ survives in the quotient $G$, we have that the subgroup $N$ acts freely on $\calt$, and the quotient map $\rho\colon \calt\to \calt/N$ is a covering map. Moreover,  $G$ acts on the quotient $\calt/N$. We leave the reader to verify that the quotient  $\calt/N$ is $G$-homeomorphic to the coned-off Cayley graph $\hat\Gamma(G,\calp,S)$. 

Fix a vertex $\tilde x_0$  of $\calt$ such that $\rho(\tilde x_0)=x_0$.  Then any element $g$ of $N$ induces a unique embedded path $\alpha_g$ from $\tilde x_0$ to $g\tilde x_0$. Let $\gamma_g=\rho\circ \alpha_g$ and note it is a closed combinatorial path in $\hat\Gamma$ based at $x_0$.
Since $\calt$ is simply connected,
standard covering space theory implies that the map
$ N\to \pi_1(\hat\Gamma, \rho(x_0))$ given by  $g\mapsto [\gamma_g]$ is a group isomorphism.
 \end{proof}

\begin{definition}[Coned-off Cayley complex $\hat X(G,\calp,S)$]
Consider a finite relative presentation $G=\langle S,\calp\ |\ R \rangle$. The \emph{coned-off Cayley complex $\hat X(G,\calp,S)$} of $G$ is a $2$-dimensional $G$-complex  with $1$-skeleton the coned-off Cayley graph $\hat\Gamma(G,\calp,S)$ defined as follows. 

We use the setup of Proposition~\ref{prop:Referee}. In particular, $N$ is the  normal subgroup of $\tilde G$ generated by $R$, we have fix a vertex $x_0$ of $\hat\Gamma$,
and we have a group isomorphism $N\to \pi_1(\hat\Gamma,x_0)$ given by $g\mapsto [\gamma_g]$ where $\gamma_g$ is a combinatorial closed path based at $x_0$.

For $g\in G$ and $r\in R$,  let $g\cdot\gamma_r$ be the translated closed path in $\hat\Gamma$ without an initial point, i.e., these are cellular maps from $S^1 \to \hat\Gamma$. Consider the $G$-set $\Omega=\{g.\gamma_{r}\mid r\in R, g\in G\}$ of closed paths in $\hat\Gamma$. The complex $\hat X$  is then  obtained by attaching a $2$-cell to $\hat \Gamma$ for every closed path in $\Omega$. In particular, the pointwise $G$-stabilizer  of a $2$-cell of $\hat X$ coincides with the pointwise $G$-stabilizer of its boundary path. The natural isomorphism from $N$ to $\pi_1(\hat\Gamma, \rho(x_0))$ implies that $\hat X$ is simply connected. Moreover,  the $G$-action is cocompact since $R$ is finite.
\end{definition} 

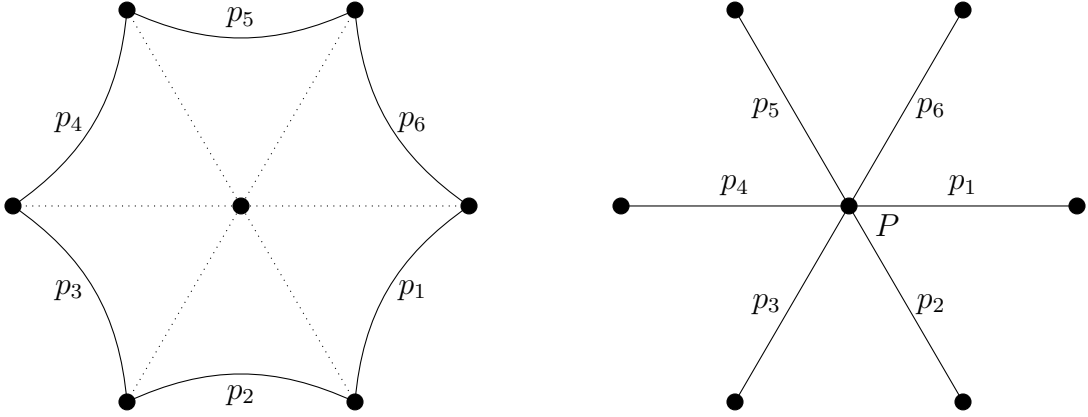
\begin{figure}[ht]
    \centering
    \begin{tikzpicture}[vert/.style={circle, draw=black, fill=black, thin, minimum size=0.1, scale=0.5}, >=stealth, scale=1]
\tkzDefPoint(0,0){O}\tkzDefPoint(3,0){A}
\tkzDefPointsBy[rotation=center O angle -360/6](A,B,C,D,E,F){B,C,D,E,F}
\tkzDrawPoints[fill =black,size=6,color=black](O,A,B,C,D,E,F)
\draw (A) to[bend left =-25] node[midway, right] {$p_1$} (B) {};
\draw (B) to[bend left =-25] node[midway, below, yshift=0mm] {$p_2$} (C) {};
\draw (C) to[bend left =-25] node[midway, left,  xshift=0mm] {$p_3$} (D) {};
\draw (D) to[bend left =-25] node[midway, left] {$p_4$} (E) {};
\draw (E) to[bend left =-25] node[midway, above] {$p_5$} (F) {};
\draw (F) to[bend left =-25] node[midway, right] {$p_6$} (A) {};
\draw[dotted] (A) to (O);
\draw[dotted] (B) to (O);
\draw[dotted] (C) to (O);
\draw[dotted] (D) to (O);
\draw[dotted] (E) to (O);
\draw[dotted] (F) to (O);

\tkzDefPoint(8,0){OO}\tkzDefPoint(11,0){A2}
\tkzDefPointsBy[rotation=center OO angle -360/6](A2,B2,C2,D2,E2,F2){B2,C2,D2,E2,F2}
\tkzDrawPoints[fill =black,size=6,color=black](OO,A2,B2,C2,D2,E2,F2)
\draw (A2) to node[midway, above] {$p_1$} (OO);
\draw (B2) to node[midway, right] {$p_2$} (OO);
\draw (C2) to node[midway, left] {$p_3$} (OO);
\draw (D2) to node[midway, above] {$p_4$} (OO);
\draw (E2) to node[midway, left] {$p_5$} (OO);
\draw (F2) to node[midway, right] {$p_6$} (OO);
\node at (8.5,-0.25) {$P$};
\end{tikzpicture}
    \caption{The image of the boundary of a $\calP$-cell on $\bar\Gamma(G,\calp,S)$ under the quasi-isometry $\varphi$. }
    \label{fig:disctype1.OsinToConed}
\end{figure}

\begin{definition}[A natural map between $\bar X(G,\calp,S)$ and $\hat X(G,\calp,S)$]  There exists a $G$-map  $\varphi\colon\bar X(G,\calp,S)\to \hat X(G,\calp,S)$ that extends the natural quasi-isometry $\varphi\colon \bar\Gamma(G,\calp,S)\to \hat\Gamma(G,\calp,S)$.  In particular, we have a commutative diagram 
\[\begin{tikzcd}
\bar\Gamma(G,\calp,S) \arrow[r, "\varphi"] \arrow[hookrightarrow]{d} & \hat \Gamma(G,\calp,S) \arrow[hookrightarrow]{d} \\ \bar X(G,\calp,S)   \arrow[r, "\varphi"] &  \hat X(G,\calp,S).
\end{tikzcd}\]
Specifically every $R$-cell in $\bar X(G,\calp,S)$  is sent homeomorphically to the corresponding 2-cell in  $\hat X(G,\calp,S)$, while every $\calp$-cell in $\bar X(G,\calp,S)$ is collapsed to a star-like 1-complex as we see in  \Cref{fig:disctype1.OsinToConed}.
\end{definition}

\begin{remark}\label{rem:diagrams}
The following statements are straightforward to verify from the definition of $\varphi\colon \bar X(G,\calp,S) \to \hat X(G,\calp,S)$ and \Cref{fig:disctype1.OsinToConed}. Denote by $\Delta_{\hat X}$ the combinatorial Dehn function of $\hat X(G,\calp,S)$.

\begin{enumerate}
    \item Let $\hat\gamma \colon S^1\to \hat X(G,\calp,S)$ be a loop   with no backtracks in the coned-off Cayley complex. Then we can pull-back $\hat\gamma$ to a loop  $\gamma \colon S^1\to \bar X(G,\calp,S)$ in such a way that the following diagram commutes
\[
\xymatrix{
& S^1\ar@{-->}[dl]_{\exists!\gamma} \ar[dr]^{\hat\gamma} & \\
 \bar\Gamma(G,\calp,S)\ar[rr]^\varphi& &\hat \Gamma(G,\calp,S).
}\]
 Let $D\to \bar X(G,\calp,S)$ be a disk diagram filling a combinatorial loop $\gamma:S^1\to \bar X(G,\calp,S)$. Then there exists a disk diagram $\hat D\to \hat X(G,\calp,S)$ so that the following diagram commutes
 \[
\xymatrix{
 S^1 \ar@{=}[d] \ar^\gamma[r]&\bar X(G,\calp,S)\ar[r]^{\varphi}  & \hat X(G,\calp,S) \\
 \partial D \ar[r]& D \ar@{-->}[r] \ar[u]& \hat D. \ar@{-->}[u]}
 \]

 \item Let $\gamma \colon S^1\to \bar X(G,\calp,S)$ be a combinatorial loop of length $n$, then we can push it to a loop $\hat\gamma=\varphi\circ\gamma \colon S^1\to \hat X(G,\calp,S)$ of length at most $2n$, that is, we have the following commutative diagram
\[\xymatrix{
 & S^1\ar[dl]_\gamma \ar@{-->}[dr]^{\varphi\circ \gamma} & \\
 \bar\Gamma(G,\calp,S)\ar[rr]^\varphi& &\hat \Gamma(G,\calp,S)}.\]
 Let $\hat D\to\hat X(G,\calp,S)$ be a  disk diagram filling the cycle $\hat\gamma=\varphi\circ \gamma$.  Then there exists a disk diagram $D\to \bar X$ such that the following diagram commutes
 \[
\xymatrix{
 \bar X(G,\calp,S)\ar[r]^{\varphi}  & \hat X(G,\calp,S) & S^1 \ar@{=}[d] \ar[l]_{\hat\gamma} \\
  D \ar@{-->}[r] \ar@{-->}[u]& \hat D \ar[u] & \partial \hat D. \ar@{^(->}[l]}
 \]
 \item In both items above, $\Area^{rel}(D)=\Area(\hat D)$.
\end{enumerate}
\end{remark}

\begin{proposition}\label{lem:dehnfunction:connedoff:cayley}
Let $G=\langle S,\calp\ |\ R \rangle$ be a finite relative presentation, and let $\Delta_{G,\calp}$ and  $\hat X$ be the corresponding  relative Dehn function and coned-off Cayley complex respectively. Then $\Delta_{G,\calp}(n)\asymp\Delta_{\hat X}(n)$ for every $n\in\NN$.   
\end{proposition}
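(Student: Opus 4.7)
The plan is to establish the two asymptotic inequalities $\Delta_{G,\calp}\preceq \Delta_X$ and $\Delta_X\preceq \Delta_{G,\calp}$ separately, in both cases exploiting the $G$-map $\varphi\colon \bar X(G,\calp,S)\to \hat X(G,\calp,S)=X$ of \Cref{def:varphi} together with the disk-diagram transfer properties collected in \Cref{rem:diagrams}. The crucial point is part (3) of that remark: the identity $\Area^{rel}(D)=\Area(\hat D)$ matches the definitions of the two Dehn functions exactly, since under $\varphi$ the $R$-cells of $\bar X(G,\calp,S)$ map homeomorphically to the $2$-cells of $X$, while the $\calp$-cells of $\bar X(G,\calp,S)$ are crushed to $1$-dimensional stars around the cone vertices (see \Cref{fig:disctype1.OsinToConed}), so they are precisely the cells ignored on both sides.

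For $\Delta_{G,\calp}\preceq \Delta_X$, start with a combinatorial loop $\gamma\colon S^1\to \bar X(G,\calp,S)$ of length at most $n$, and decompose its edges into $m$ edges labelled in $S$ and $k$ edges labelled in elements of some $P\in\calp$, so $m+k\leq n$. The push-forward $\hat\gamma=\varphi\circ\gamma$ is a combinatorial loop in $X$ of length $m+2k\leq 2n$, because each $\calp$-edge of $\bar\Gamma(G,\calp,S)$ becomes a length-two path through a cone vertex. Take a minimal-area disk diagram $\hat D\to X$ spanning $\hat\gamma$, so $\Area(\hat D)\leq \Delta_X(2n)$. Applying \Cref{rem:diagrams}(2), this lifts to a disk diagram $D\to \bar X(G,\calp,S)$ spanning $\gamma$ with $\Area^{rel}(D)=\Area(\hat D)$. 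Taking the supremum over $\gamma$ of length at most $n$ gives $\Delta_{G,\calp}(n)\leq \Delta_X(2n)$, which is exactly $\Delta_{G,\calp}\preceq \Delta_X$ in the sense of \Cref{def:asymp:relation}.

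For the reverse inequality, let $\hat\gamma\colon S^1\to X$ be a combinatorial loop of length at most $n$; I may assume it has no backtracks, as removing a backtrack does not change the minimal spanning area. By \Cref{rem:diagrams}(1) we pull $\hat\gamma$ back to a combinatorial loop $\gamma\colon S^1\to \bar X(G,\calp,S)$ whose length is at most $n$, because each pair of consecutive edges of $\hat\gamma$ meeting at a cone vertex coalesces into a single $\calp$-edge of $\bar\Gamma(G,\calp,S)$, while the remaining $S$-edges are preserved. Pick a disk diagram $D\to \bar X(G,\calp,S)$ spanning $\gamma$ of minimal relative area, so $\Area^{rel}(D)\leq \Delta_{G,\calp}(n)$, and push it forward via \Cref{rem:diagrams}(1) to a disk diagram $\hat D\to X$ spanning $\hat\gamma$ with $\Area(\hat D)=\Area^{rel}(D)$. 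Hence $\Delta_X(n)\leq \Delta_{G,\calp}(n)$.

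The main obstacle lies not in the outline above but is already packaged into \Cref{rem:diagrams}: one must verify that the pull-back and push-forward of disk diagrams through $\varphi$ yield genuine combinatorial disk diagrams and preserve the count of $R$-cells. This amounts to cellular bookkeeping near cone vertices, where each $\calp$-cell of $\bar X(G,\calp,S)$ is crushed as in \Cref{fig:disctype1.OsinToConed}. Once that machinery is in place, the proof is an area-counting exercise, and the linear factor of $2$ between the lengths is harmless under the relation $\asymp$.
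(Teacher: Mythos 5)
Your proposal is correct and follows essentially the same route as the paper: both directions are reduced to the pull-back/push-forward machinery of \Cref{rem:diagrams} through the map $\varphi$, using the identity $\Area^{rel}(D)=\Area(\hat D)$ and the length comparison (with the harmless factor of $2$) to get $\Delta_{\hat X}(n)\leq \Delta_{G,\calp}(n)$ and $\Delta_{G,\calp}(n)\leq \Delta_{\hat X}(2n)$. The only cosmetic difference is that you spell out the edge-counting behind the length bounds and the backtrack reduction, which the paper leaves implicit.
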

\begin{proof}
 Let $\hat\gamma \colon S^1\to \hat X(G,\calp,S)$ be a loop of length $n$ with no backtracks in the coned-off Cayley complex. By the first item of Remark~\ref{rem:diagrams} and considering a minimal relative area disk diagram $D\to \bar X(G,\calp,S)$ filling a pull-back cycle $\hat\gamma\colon S^1 \to \hat\Gamma(G,\calp,S)$ of $\gamma$, it follows that
 \[ \Delta_{G,\calp}(|\hat \gamma|) \geq \Delta_{G,\calp}(|\gamma|) \geq  \Area^{rel}(D) = \Area(\hat D) \geq \Area (\hat\gamma ) \]
 where the equality comes from the third item of \Cref{rem:diagrams}.
Therefore $\Delta_{G,\calp}(n)\geq \Delta_{\hat X}(n)$ for all $n\in\NN$.
Analogously, let $\gamma\colon S^1\to \bar X(G,\calp,S)$ be a combinatorial loop.
By the second item of Remark~\ref{rem:diagrams} and considering a minimal area disk diagram
$\hat D \to \hat X(G,\calp,S)$ filling  $\hat\gamma=\varphi\circ \gamma$, it follows that
 \[ \Area^{rel}(\gamma) \leq \Area^{rel}(D) = \Area(\hat D) \leq \Delta_{\hat X}(|\hat\gamma|) \leq \Delta_{\hat X}(2|\gamma|)  \]
 and hence $\Delta_{G,\calp}(n)\leq \Delta_{\hat X}(2n)$ for all $n\in\NN$.
 \end{proof}

The following corollary is a direct consequence of \Cref{thm.relDehn2complex} and \Cref{lem:dehnfunction:connedoff:cayley}.

\begin{corollary}\label{cor:DeltaFine}
Let $G$ be finitely presented relative to a collection of subgroups $\calp$. The following statements are equivalent:
\begin{enumerate}
    \item  The relative Dehn function $\Delta_{G,\calp}$ takes only finite values.
    \item The graph $\hat\Gamma(G,\calp)$ is fine.
\end{enumerate} 
\end{corollary}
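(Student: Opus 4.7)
The plan is to chain the two results cited in the statement, applying Theorem~\ref{thm.relDehn2complex} to the coned-off Cayley complex $\hat X(G,\calp,S)$ and then transferring the conclusion back to the relative Dehn function via Proposition~\ref{lem:dehnfunction:connedoff:cayley}.

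First I would fix a finite relative presentation $G=\langle S,\calp \mid R\rangle$ and consider the associated coned-off Cayley complex $\hat X=\hat X(G,\calp,S)$. By its construction, $\hat X$ is a cocompact $G$-$2$-complex with $1$-skeleton $\hat\Gamma(G,\calp,S)=\hat\Gamma(G,\calp)$, and the remark at the end of the definition notes that it is simply connected. Hence to apply Theorem~\ref{thm.relDehn2complex} I need only verify that $\hat X$ has finite edge stabilisers. This is immediate from the description of the coned-off Cayley graph: every edge of $\hat\Gamma(G,\calp,S)$ has at least one endpoint in $G$, whose pointwise $G$-stabiliser is trivial, so every edge stabiliser is trivial and in particular finite.

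With the hypotheses verified, Theorem~\ref{thm.relDehn2complex} applied to $\hat X$ yields that the combinatorial Dehn function $\Delta_{\hat X}$ takes only finite values if and only if $\hat\Gamma(G,\calp)$ is a fine graph. Proposition~\ref{lem:dehnfunction:connedoff:cayley} gives $\Delta_{G,\calp}(n)\asymp \Delta_{\hat X}(n)$; in particular, by the explicit inequalities $\Delta_{\hat X}(n)\leq \Delta_{G,\calp}(n)$ and $\Delta_{G,\calp}(n)\leq \Delta_{\hat X}(2n)$ extracted from its proof, one of these functions takes only finite values if and only if the other does. Combining the two equivalences gives (1)$\Leftrightarrow$(2).

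There is no substantial obstacle: every nontrivial ingredient has already been established in the paper, and the argument is a bookkeeping step. The only point deserving a brief comment is the verification that the coned-off Cayley complex has finite edge stabilisers, so that Theorem~\ref{thm.relDehn2complex} genuinely applies; this is where the hypothesis of a finite relative presentation (as opposed to a more general setup) is implicitly used, since it ensures $\hat X$ is cocompact and its construction produces the required simply connected $G$-$2$-complex.
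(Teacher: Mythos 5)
Your proposal is correct and follows exactly the paper's route: the corollary is deduced by applying Theorem~\ref{thm.relDehn2complex} to the coned-off Cayley complex (whose edge stabilisers are indeed finite, since every edge has an endpoint in the free $G$-orbit of vertices) and then transferring finiteness via Proposition~\ref{lem:dehnfunction:connedoff:cayley}. Your extra verification of the hypotheses and the explicit inequalities used to pass finiteness between $\Delta_{\hat X}$ and $\Delta_{G,\calp}$ are just the bookkeeping the paper leaves implicit.
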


In the proof of  \cite[Proposition~2.50]{GM08} is implicit that $(1)$ implies $(2)$ of the previous theorem.

The following corollary is a straightforward consequence of \Cref{lem:dehnfunction:connedoff:cayley} and \Cref{prop:deltaX:vs:fN}.

\begin{corollary}\label{lem:fillable01}
Let $G$ be a group finitely presented relative to a finite collection of subgroups $\calp$. If $\Delta_{G,\calp}$ takes only finite values, then $\hat\Gamma(G,\calp)$ is fillable for some integer $m$. 
\end{corollary}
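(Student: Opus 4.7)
The plan is to chain together the two cited results, which together give the desired conclusion directly. First, assuming $\Delta_{G,\calp}$ takes only finite values, I would apply \Cref{lem:dehnfunction:connedoff:cayley} to deduce that the combinatorial Dehn function $\Delta_{\hat X}$ of the coned-off Cayley complex $\hat X=\hat X(G,\calp,S)$ also takes only finite values; this is immediate from the asymptotic equivalence $\Delta_{G,\calp}\asymp\Delta_{\hat X}$, which transfers finiteness.

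Next, I would verify that $\hat X$ satisfies the hypotheses of \Cref{prop:deltaX:vs:fN}: by construction (from the Bass--Serre tree quotient) it is a combinatorial $G$-$2$-complex that is simply connected, and it is cocompact because the finite relative presentation $\langle S,\calp\mid R\rangle$ contributes only finitely many $G$-orbits of cells. Applying \Cref{prop:deltaX:vs:fN} then yields an integer $N\in\NN$, sufficiently large, such that the coarse isoperimetric function $f^{\hat X}_N$ takes only finite values.

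The final step is to transfer finiteness of $f^{\hat X}_N$ to the $1$-skeleton. This is essentially a matter of unwinding definitions: an $N$-filling of a combinatorial loop $c$ is a pair $(P,\Phi)$ with $\Phi\colon P^{(1)}\to X^{(1)}$ subject to a diameter bound on images of faces, so the quantity $f^{X}_N(\ell)$ depends only on the $1$-skeleton of $X$ together with its edge-length metric. Since $\hat X^{(1)}=\hat\Gamma(G,\calp,S)$, we obtain $f^{\hat\Gamma(G,\calp)}_N=f^{\hat X}_N$, which takes only finite values; hence $\hat\Gamma(G,\calp)$ is fillable in the sense of \Cref{sec:coarse:isop.functions}, with the integer $m=N$.

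There is no serious obstacle: the argument is a straightforward transfer of the ``finite-valued'' property along the equivalences $\Delta_{G,\calp}\asymp\Delta_{\hat X}$ and $\Delta_{\hat X}\asymp f^{\hat X}_N$. The only point requiring any care is confirming that the coarse isoperimetric functions of the $2$-complex $\hat X$ and of its $1$-skeleton $\hat\Gamma(G,\calp)$ coincide, and this is visible directly from the definition since $\epsilon$-fillings target the $1$-skeleton.
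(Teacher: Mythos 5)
Your argument is correct and is exactly the paper's route: the paper derives this corollary as a "straightforward consequence" of Proposition~\ref{lem:dehnfunction:connedoff:cayley} (transferring finiteness of $\Delta_{G,\calp}$ to $\Delta_{\hat X}$) and Proposition~\ref{prop:deltaX:vs:fN} (yielding finiteness of $f^{\hat X}_N$ for large $N$), with the same implicit identification of the coarse isoperimetric function of $\hat X$ with that of its $1$-skeleton $\hat\Gamma(G,\calp)$. Your added checks that $\hat X$ is cocompact and simply connected are precisely what the construction of the coned-off Cayley complex provides.
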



\begin{proposition}\label{prop:fillable03} Let $G$ be a group finitely generated by $S$ with respect to $\calp$.
If $\hat\Gamma(G,\calp,S)$ is connected, fine, cocompact, and $k$-fillable,   then $G$ is finitely presented relative to $\calp$.
\end{proposition}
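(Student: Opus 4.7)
The plan is to extract a finite relative presentation directly from the geometry of $\hat\Gamma := \hat\Gamma(G,\calp,S)$. First, since $\hat\Gamma$ is connected and fillable, \Cref{rem.fillable} furnishes a positive integer $m$ such that the 2-complex $Y$ obtained from $\hat\Gamma$ by attaching a 2-cell along every circuit of length at most $m$ is simply connected. Fineness guarantees that each edge of $\hat\Gamma$ lies on only finitely many circuits of length at most $m$, and cocompactness yields finitely many $G$-orbits of edges; combined these give only finitely many $G$-orbits of circuits of length at most $m$. Pick one representative $c_1, \ldots, c_r$ per orbit.

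Each loop $c_i$ is translated into a word $R_i$ in the free product $F = F(S) \ast \bigast_{P \in \calp} P$ via the natural reading rule: an $S$-edge $\{g, gs\}$ contributes $s \in S$, while a cone-point visit $\ldots, g_1, gP, g_2, \ldots$ with $g_1, g_2 \in gP$ (which occurs because cone points are only adjacent to vertices of $G$, and a simple circuit has $g_1 \neq g_2$) contributes $g_1^{-1} g_2 \in P \setminus \{1\}$. Since $c_i$ closes up as a loop, $R_i$ lies in the kernel $N$ of the natural surjection $F \twoheadrightarrow G$. Set $R := \{R_1, \ldots, R_r\}$; it is finite.

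It remains to verify $\nclose{R} = N$, equivalently that $G = \langle S, \calp \mid R \rangle$. Given $w \in N$, its letters trace a combinatorial loop $\gamma_w$ in $\hat\Gamma$ based at the identity vertex. Since $Y$ is simply connected, $\gamma_w$ bounds a combinatorial disk diagram in $Y$ whose 2-cells are $G$-translates of the $c_i$. Reading this diagram off, via a spanning tree and chosen paths from the basepoint to each 2-cell, should yield an expression $w = \prod_j u_j R_{i_j}^{\pm 1} u_j^{-1}$ in $F$, whence $w \in \nclose{R}$. The hard part is this last step: for each 2-cell $g \cdot c_i$ of the diagram one must consistently lift $g$ to a word $u \in F$ so that the conjugate $u R_i u^{-1}$ records precisely the based loop one actually reads by traversing the diagram along the chosen path. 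This is the bookkeeping underlying the Bass-Serre construction of $\hat X(G, \calp, S)$ given earlier in the paper; once a basepoint $x_0$ of the Bass-Serre tree is fixed, the natural identification of $N$ with $\pi_1(\hat\Gamma, \rho(x_0))$ should convert the topological null-homotopy of $\gamma_w$ in $Y$ into the algebraic statement $w \in \nclose{R}$.
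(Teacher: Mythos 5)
Your proposal is correct and follows essentially the same route as the paper: finitely many $G$-orbits of circuits of bounded length (fineness plus cocompactness of edges), simple connectivity of the complex obtained by coning off short circuits (fillability), and the identification of $N=\ker\bigl(F(S)\ast\bigast_{P\in\calp}P \to G\bigr)$ with $\pi_1(\hat\Gamma,x_0)$ coming from the free $N$-action on the Bass--Serre tree. The bookkeeping you flag as the hard part is precisely what the paper dispatches with that covering-space isomorphism $\psi\colon N\to\pi_1(\hat\Gamma,x_0)$: simple connectivity of the filled complex shows $\pi_1(\hat\Gamma,x_0)$ is generated by conjugates of the classes of the representative circuits, which under $\psi$ is exactly the statement $N=\nclose{r_1,\dots,r_\ell}$.
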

\begin{proof}
We use the setup of Proposition~\ref{prop:Referee}. In particular, $N$ is the  normal subgroup of $\tilde G$ generated by $R$, we  fix  a vertex $x_0$ of $\hat\Gamma=\hat\Gamma(G,\calp, S)$,
and we have a group isomorphism $\psi\colon N\to \pi_1(\hat\Gamma,x_0)$.

Since $\hat\Gamma$ is $k$-fillable, there is an integer $m$ such that the complex $\hat X$ obtained by attaching $2$-cells with boundary paths the circuits of length at most $m$ is simply connected, see Remark~\ref{rem.fillable}.

Since $\hat\Gamma$ is fine and  there are finitely many $G$-orbits of edges,  there are finitely many $G$-orbits of circuits of length at most $m$. Let $\{\gamma_1, \ldots , \gamma_\ell\}$ be a collection of representatives of circuits of length $m$, and after translations assume that each $\gamma_i$ contains the vertex $x_0$ corresponding to the identity element of $G$. Then each $\gamma_i$ defines an element of the fundamental group $\pi_1(\hat\Gamma, x_0)$.    Let $r_i \in N$ be defined by  $\psi(r_i) = \gamma_i$.

Since $\hat X$ is simply connected, we have that $\pi_1(\hat\Gamma, x_0)$ is generated by the closed paths arising as concatenations of the form $\alpha_g\cdot  \gamma_i \cdot \bar \alpha_g$ for $g\in \tilde G$,  where $\alpha_g$ is the projection via $\rho$ of the unique path from $\tilde x_0$ to $g.\tilde x_0$. Equivalently, $N$ is generated by the elements $gr_ig^{-1}$ for $g\in \tilde G$. We have shown that $N$ is normally generated by $\calr=\{r_1,\ldots , r_\ell\}$.

Since  $\hat\Gamma=\hat\Gamma(G,\calp)$ is cocompact, the collection $\calp$ is finite. Therefore $\langle S, \calp \ | \calr \rangle$ is a finite relative presentation of $G$. 
\end{proof}

Summarising the results of this section we obtain \Cref{thm:ConefOff} below.  

\begin{thm}[\Cref{thmx:ConefOff}]
\label{thm:ConefOff}
Let $G$ be a group  finitely generated relative to a finite collection of subgroups $\calp$. If $G$ is finitely presented relative to $\calp$, then
\begin{enumerate}
    \item $\hat\Gamma(G,\calp)$ is fillable.
    \item The relative Dehn function $\Delta_{G,\calp}$ is well-defined if and only if  $\hat\Gamma(G,\calp)$ is fine graph.
\end{enumerate}
Conversely, if $\hat\Gamma(G,\calp)$ is   fine and fillable, then
$G$ is finitely presented relative to $\calp$  and hence $\Delta_{G,\calp}$ is well-defined.
\end{thm}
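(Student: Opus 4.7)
The theorem is a summary of results already established in this section, so my plan is to handle its three claims by citation and assembly rather than any new argument.

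For item (2), I would directly invoke \Cref{cor:DeltaFine}, which is exactly the required biconditional between well-definedness of $\Delta_{G,\calp}$ and fineness of $\hat\Gamma(G,\calp)$. That corollary was itself obtained by combining the asymptotic equivalence $\Delta_{G,\calp} \asymp \Delta_{\hat X(G,\calp,S)}$ from \Cref{lem:dehnfunction:connedoff:cayley} with \Cref{thm.relDehn2complex} (well-definedness of a combinatorial Dehn function of a cocompact simply-connected $G$-$2$-complex is equivalent to fineness of its $1$-skeleton), so nothing new is needed.

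For item (1), my plan is to invoke \Cref{lem:fillable01}, which yields fillability of $\hat\Gamma(G,\calp)$ under finite relative presentation together with well-definedness of $\Delta_{G,\calp}$. The key point feeding that lemma is that the coned-off Cayley complex $\hat X(G,\calp,S)$ is simply connected, cocompact, and has $2$-cells with uniformly bounded boundary length (determined by the finite relation set $R$); combined with \Cref{prop:deltaX:vs:fN}, this converts finite-valuedness of $\Delta_{\hat X}$ into finite-valuedness of the coarse isoperimetric function $f^{\hat X}_N$ for large $N$, which coincides with $f^{\hat \Gamma}_N$ since fillability depends only on the $1$-skeleton. The main obstacle is the subtle point that fillability as defined requires a uniform bound on $N$-filling areas over all circuits of bounded length, not merely the existence of some filling for each one; this uniformity is exactly what \Cref{lem:fillable01} delivers once well-definedness of $\Delta_{G,\calp}$ is in force.

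For the converse, my plan is to verify the hypotheses of \Cref{prop:fillable03}: relative finite generation of $G$ with respect to $\calp$ ensures $\hat\Gamma(G,\calp)$ is connected and cocompact, and we have been given fineness and $k$-fillability for some $k$. That proposition then yields a finite relative presentation of $G$ with respect to $\calp$. Combining this with the standing fineness hypothesis and item (2), we conclude that $\Delta_{G,\calp}$ is well defined, which completes the proof. The technical heart is really item (1); the remaining steps are quotations of earlier results in the correct order.
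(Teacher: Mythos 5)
Your proposal is correct and matches the paper's own proof, which establishes \Cref{thm:ConefOff} precisely by assembling \Cref{cor:DeltaFine} for item (2), \Cref{lem:fillable01} for item (1), and \Cref{prop:fillable03} (together with item (2)) for the converse. Your added glosses on where those three results come from (\Cref{lem:dehnfunction:connedoff:cayley}, \Cref{thm.relDehn2complex}, \Cref{prop:deltaX:vs:fN}) accurately reflect how the paper derives them, so nothing further is needed.
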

\begin{proof}
This follows from \Cref{cor:DeltaFine}, \Cref{lem:fillable01} and \Cref{prop:fillable03}.
\end{proof}

Note that \Cref{thmx:ConefOff} from the introduction is a particular case \Cref{thm:ConefOff}. 

\section{Fineness and quasi-isometries of pairs}
In this section we will prove Theorem~\ref{thmx:fine} from the introduction.  The heart of the argument is establishing \Cref{prop:FineQI-2} which gives conditions on a quasi-isometry of pairs $q\colon(G,\calp)\to(H,\calq)$ to induce a quasi-isometry of coned off Cayley graphs.  The remainder of the section then works towards replacing the geometric-set-theoretic conditions on $q$ with algebraic conditions on $\calp$ and $\calq$.  This yields Proposition~\ref{prop:DotqIsFunction}.  Finally, we give a proof of Theorem~\ref{thmx:fine}.

Another equivalent definition of Bowditch's fine graphs is used in this section~\cite[Proposition 2.1]{Bo12}.

\begin{definition}[Fine]\label{def:fine2}
Let $\Gamma$ be a graph and let $v$ be a vertex of  $\Gamma$. Let \begin{align*}
T_v \Gamma = \{w \in V(\Gamma) \mid \{v,w\}\in E(\Gamma)\}.
\end{align*}
denote the set of the  vertices adjacent to $v$.
For $x,y \in T_v \Gamma$,  the \emph{angle metric} $\angle_v (x,y)$ is the length of the shortest path in the graph  $\Gamma \setminus \{v\}$ between $x$ and $y$, with $\angle_v (x,y) = \infty$ if there is no such path.  The graph $\Gamma$ is \emph{fine at $v$} if $(T_v \Gamma,\angle_v)$ is a locally finite metric space. The graph $\Gamma$ is \emph{fine at    $C \subseteq V(\Gamma)$} if $\Gamma$ is fine at $v$  for all $v \in C$. The graph $\Gamma$ is a \emph{fine graph} if it is fine at every vertex.
\end{definition}

\begin{definition}[Quasi-isometry of Pairs]\label{def:qipairs}
Consider two pairs $(G, \mathcal{P})$ and $(H, \mathcal{Q})$ where $G$ and $H$ are finitely generated groups with chosen word metrics $\dist_G$ and $\dist_H$ with respect to some finite generating sets. Denote the  Hausdorff distance between subsets of $H$ by $\Hdist_H$.
An $(L,C)$-quasi-isometry $q\colon G \to H$ is an \emph{$(L,C,M)$-quasi-isometry of  pairs}  $q\colon (G, \mathcal{P})\to (H, \mathcal{Q})$ if the relation
\begin{equation*}\label{eq:def-qi-relation} 
\dot{q} = \{ (A, B)\in G/\mathcal{P} \times H/\mathcal{Q} \colon \Hdist_H(q(A), B) <M  \} \end{equation*}
satisfies that the projections into $G/\mathcal{P}$ and $H/\mathcal{Q}$ are surjective. 
\end{definition}

In this section we explicitly use the relational approach of the notion of a function between sets:  a function $f$ from $A$ to $B$ is a subset of $A\times B$ so that for every $a\in A$ there is a unique $b\in B$ such that $(a,b)\in f$. 

The proof of \Cref{thmx:fine}, the main objective of this section,  relies on the study of the relation $\dot q$ defined by a quasi-isometry of pairs $q\colon (G,\calp)\to(H,\calq)$. We will show that in the case that $\dot q$ defines a bijection $G/\calp \to H/\calq$,  the coned-off Cayley graphs $\hat\Gamma(G,\calp)$ and $\hat\Gamma(H,\calq)$ share global an local geometric conditions, see Proposition~\ref{prop:FineQI-2}. In the second part of the section, we provide algebraic  conditions guaranteeing that the relation $\dot q$ is a bijection, see Proposition~\ref{prop:DotqIsFunction}. 

\begin{remark}
Note that in \Cref{def:qipairs} the notion of a quasi-isometry of pairs is independent of the chosen finite generating sets for $G$ and $H$.  In the case where we want to keep track of specific generating sets we use the following notation. If $G$ and $H$ are groups generated by finite generating sets $S_0$ and $T_0$ respectively, by a quasi-isometry of pairs $(G, \mathcal{P},S_0)\to (H, \mathcal{Q},T_0)$ we mean a quasi-isometry of pairs $(G,\calp)\to(H,\calq)$ with respect to the word metrics induced by $S_0$ and $T_0$.  
\end{remark}

\begin{remark}
If $\calp$ is a finite collection, then the metric space $(G/\calp, \Hdist)$ is locally finite. Indeed, fixing $P\in\calp$ and $r>0$, there are finitely many left cosets in $G/\calp$ such that $\Hdist(P,gP)<r$. Moreover, the left $G$-action on $G/\mathcal{P}$ by multiplication on the left  preserves the Hausdorff distance $\Hdist$ between subsets of $G$ and hence it is an action by isometries. \end{remark}

\begin{remark}
If $q\colon (G,\calp)\to (H,\calq)$ is an $(L,C,M)$-quasi-isometry of pairs, and  $\dot{q}$ is a function $G/\calp \to H/\calq$, then
\[ \frac{1}{L}\Hdist(A,B)-C-2M \leq \Hdist(\dot{q}(A),\dot{q}(B))\leq L\Hdist(A,B)+C+2M.\]
In particular, $\dot{q}\colon(G/\calp, \Hdist) \to (H/\calq, \Hdist)$ is a quasi-isometry. 
\end{remark}

The main technical result of this section is the following proposition. Note that given a connected graph $\Gamma$ we consider the vertex set as a metric space with metric induced by the path metric. In particular, a quasi-isometry between graphs is a function of the vertex sets satisfying the usual axioms.

 \begin{proposition}\label{prop:FineQI-2}
Let  $G$ and $H$ be groups, let $S\subset G$ and $T\subset H$, and let $S_0\subset S$ and $T_0\subset T$ be finite generating sets of $G$ and $H$ respectively. Consider collections $\calP$ and $\mathcal Q$ of subgroups of $G$ and $H$ respectively. Let  $q\colon G\to H$ be a function.

Suppose $q$ is a quasi-isometry $\Gamma(G,S) \to \Gamma(H,T)$, is a 
  quasi-isometry of pairs $(G, \mathcal{P},S_0)\to (H, \mathcal{Q},T_0)$, and 
   $\dot{q}$ is a bijection $G/\calp \to H/\calq$. 
   \begin{enumerate}
   \item If $\hat q = q\cup \dot{q}$, then $\hat q$ is a   quasi-isometry   $\hat \Gamma (G,\calp, S) \to \hat   \Gamma (H,\calq, T)$.
   \item If $\hat \Gamma (H,\calq, T)$ is fine at cone vertices, then $\hat \Gamma (G,\calp, S)$ is fine at cone vertices.
   \end{enumerate}
 \end{proposition}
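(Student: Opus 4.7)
The plan is to prove (1) by checking the quasi-isometry axioms edge-by-edge and using a quasi-inverse for the lower bound, and to prove (2) by translating angle paths in $\hat\Gamma(G,\calp,S)\setminus\{gP\}$ into angle paths in $\hat\Gamma(H,\calq,T)\setminus\{hQ\}$, where $hQ=\dot q(gP)$, combined with a finite-fibre argument.

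For part (1), the upper bound is checked on the two edge types. On a generator edge $\{g,gs\}$ with $s\in S$ it follows from the quasi-isometry $\Gamma(G,S)\to\Gamma(H,T)$. On a cone edge $\{g,gP\}$, the bound $\Hdist_H(q(gP),\dot q(gP))<M$ yields $h\in\dot q(gP)$ with $d_{T_0}(q(g),h)<M$, and since $T_0\subset T$ this gives $d_{\hat\Gamma(H,\calq,T)}(q(g),\dot q(gP))\leq M+1$. Coarse surjectivity follows from coarse surjectivity of $q$ and surjectivity of $\dot q$. For the lower bound, let $\bar q$ be a quasi-inverse of $q$ as a Cayley-graph quasi-isometry and let $\dot q^{-1}$ denote the inverse of the bijection $\dot q$. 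The same edge-by-edge check, together with the estimate $d_S(\bar q(h),g)\leq L_2M+C_2+K_0$ for any $g\in gP$ associated to $h$ by the Hausdorff bound, shows $\hat{\bar q}=\bar q\cup\dot q^{-1}$ is coarsely Lipschitz, and $\hat{\bar q}\circ\hat q$ is at bounded distance from the identity by construction, so $\hat q$ is a quasi-isometry.

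For part (2), fix a cone vertex $gP$ and set $hQ=\dot q(gP)$. For each $x\in gP$ choose $h_x\in hQ$ with $d_{T_0}(q(x),h_x)<M$. Given $x,y\in gP$ joined by an edge path $x=v_0,\ldots,v_n=y$ in $\hat\Gamma(G,\calp,S)\setminus\{gP\}$, each step $\hat q(v_i)\to\hat q(v_{i+1})$ is replaced by an explicit path in $\hat\Gamma(H,\calq,T)$ of length bounded by a constant $K$ (depending only on the quasi-isometry data) which avoids $hQ$: on a generator edge, a geodesic of $\Gamma(H,T)$ uses no cone vertex; on a cone edge $\{v,vP'\}$ with $vP'\neq gP$, route from $q(v)$ by a $\Gamma(H,T)$-path of length at most $M$ to an element of $\dot q(vP')$ and take one cone edge, using injectivity of $\dot q$ to ensure the cone vertex reached is not $hQ$. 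Prepending and appending short bridges from $h_x$ to $q(x)$ and from $q(y)$ to $h_y$ inside $\Gamma(H,T)$ yields a path of length $\leq 2M+Kn$ from $h_x$ to $h_y$ in $\hat\Gamma(H,\calq,T)\setminus\{hQ\}$, so $\angle_{hQ}(h_x,h_y)\leq 2M+Kn$.

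To conclude fineness at $gP$, fineness of $\hat\Gamma(H,\calq,T)$ at $hQ$ bounds the image of the map $x\mapsto h_x$ on any angle ball around a fixed $x_0\in gP$, and its fibres are uniformly finite: if $h_x=h_y$ then $d_{T_0}(q(x),q(y))<2M$, so by the pair quasi-isometry $d_{S_0}(x,y)$ is bounded, giving finiteness from the local finiteness of $(G,d_{S_0})$. The main obstacle is the explicit cone-by-cone routing described above: the quasi-isometry from (1) alone is insufficient since a shortest path realising its constants may cross $hQ$, so injectivity of $\dot q$ must be used at every cone edge to prevent the replacement path from revisiting $hQ$.
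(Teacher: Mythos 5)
Your proposal is correct and follows essentially the same argument as the paper: the edge-by-edge replacement of paths (with $M$-bridges to cone cosets and injectivity of $\dot q$ guaranteeing the pushed-forward path avoids $\dot q(gP)$), plus a local-finiteness argument to transfer fineness. The only cosmetic differences are that you prove part (2) directly via finite fibres of $x\mapsto h_x$ while the paper argues the contrapositive with an infinite set of bounded pairwise angles, and you spell out the quasi-inverse for the lower bound in part (1) where the paper invokes symmetry.
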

 
  \begin{remark}
 There are algebraic conditions on $\calp$ and $\calq$ that imply that $\dot{q}$ is a bijection, see Proposition~\ref{prop:DotqIsFunction}.
\end{remark}

 \begin{corollary}\label{cor:FineQI}
Suppose that $q\colon (G, \calp ) \to (H, \calq)$ is an quasi-isometry of pairs and $\dot{q}$ is a bijection. Then $\hat q\colon \hat \Gamma (G,\calp) \to \hat \Gamma (H,\calq)$ is a quasi-isometry, and if $\hat \Gamma (H,\calq)$ is a fine graph, then $\hat \Gamma (G,\calp)$ is a fine graph.
\end{corollary}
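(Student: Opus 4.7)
The plan is to deduce this directly from Proposition~\ref{prop:FineQI-2}. Fix finite generating sets $S_0$ of $G$ and $T_0$ of $H$ used to define the word metrics underlying the quasi-isometry of pairs, and take $S := S_0$, $T := T_0$ in Proposition~\ref{prop:FineQI-2}. With these choices the containments $S_0 \subseteq S$ and $T_0 \subseteq T$ are automatic, $q$ is tautologically a quasi-isometry $\Gamma(G,S_0) \to \Gamma(H,T_0)$, and by hypothesis $\dot q$ is a bijection. Hence all assumptions of Proposition~\ref{prop:FineQI-2} are in force, and part~(1) immediately yields the claimed quasi-isometry $\hat q \colon \hat\Gamma(G,\calp,S_0) \to \hat\Gamma(H,\calq,T_0)$.

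For the fineness statement, recall that the vertex set of $\hat\Gamma(G,\calp,S_0)$ splits into group element vertices and cone vertices. Since both $S_0$ and $\calp$ are finite, every group element vertex $g$ has only finitely many neighbors in $\hat\Gamma(G,\calp,S_0)$ (namely the $gs$ for $s \in S_0$ together with the cone points $gP$ for $P \in \calp$), so the set $T_g\hat\Gamma(G,\calp,S_0)$ is finite and the angle metric on it is trivially locally finite; fineness at group element vertices is therefore automatic. Fineness at cone vertices is exactly the content of Proposition~\ref{prop:FineQI-2}(2), applied with the hypothesis that $\hat\Gamma(H,\calq)$, and hence $\hat\Gamma(H,\calq,T_0)$, is fine at cone vertices.

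The only step requiring any substance is Proposition~\ref{prop:FineQI-2} itself; the corollary is essentially a repackaging that removes the bookkeeping of auxiliary supersets $S \supseteq S_0$ and $T \supseteq T_0$. The only caveat worth noting is the implicit use of the fact that fineness of the coned-off Cayley graph does not depend on the choice of finite generating set for $G$ (a point already tacitly used in writing $\hat\Gamma(G,\calp)$ throughout), which one can verify by checking that altering the Cayley part of the graph by adding or removing finitely many $G$-orbits of edges between group element vertices preserves local finiteness of each angle metric $\angle_{gP}$.
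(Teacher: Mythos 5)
Your proof is correct and takes essentially the same route as the paper, which states the corollary as an immediate consequence of Proposition~\ref{prop:FineQI-2}: one applies that proposition with $S=S_0$ and $T=T_0$, and the only extra observation needed is the routine one you make, that fineness at the finite-degree group-element vertices is automatic (since $S_0$ and $\calp$ are finite), so fineness at cone vertices suffices.
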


The following argument is patterned from~\cite[Proof of Proposition 5.4]{MPR2021}.

\begin{proof}[Proof of Proposition~\ref{prop:FineQI-2}]
Suppose $q\colon \Gamma(G,S) \to \Gamma(H,T)$ is a $(\bar L, \bar C)$-quasi-isometry and 
 $q\colon (G, \mathcal{P},S_0)\to (H, \mathcal{Q},T_0)$ is a
 $(L,C,M)$-quasi-isometry of pairs.
 
 For any path   $\alpha=[v_0,v_1,\ldots ,v_\ell]$  in $\hat \Gamma (G, \calp, S)$, let $\hat q(\alpha)$ denote a path in $\hat \Gamma (H, \calq, T)$ from $\hat q(v_0)$ to $\hat q(v_\ell)$ obtained as the concatenation of paths $\beta_0, \ldots , \beta_{\ell-1}$ where $\beta_i$ is a path from $\hat q(v_i)$ to $\hat q(v_{i+1})$ defined as follows:
\begin{enumerate}
    \item If $v_i$ and $v_{i+1}$ are elements of $G$, then $\beta_i$ is a geodesic in $\Gamma(H,T)$ from $q(v_i)$ to $q(v_{i+1})$. Since $q\colon \Gamma(G,S)\to \Gamma(H,T)$ is a  $(\bar L, \bar C)$-quasi-isometry,   $\beta_i$ has length bounded by $\bar L +\bar C$.
    
    \item Suppose $v_i\in G$ and $v_{i+1}\in G/\calp$. Observe that $v_i$ is an element of the left coset $v_{i+1}$. Since $q\colon (G,\calp, S_0) \to (H,\calq, T_0)$ is an $(L,C,M)$-quasi-isometry of pairs, there is a geodesic of length at most $M$ in $\Gamma(H,T_0)$ from $q(v_i)$ to an element $w$ of the left coset $\dot{q}(v_{i+1})$. Let $\beta_i$ be the  concatenation of this geodesic in $\Gamma (H,T_0)$ followed by the edge between $w$ and the cone vertex $\dot{q}(v_{i+1})$.  
    Observe that $\beta_i$ is a path of length at most $M+1$ in $\hat \Gamma (H, \calq, T)$.
    
    \item If $v_i\in G/\calp$ and $v_{i+1} \in G$ then $\beta_i$ is defined in an analogous way as in the previous case, and also has length at most $M+1$.
\end{enumerate}
Observe that
\[ |\hat q(\alpha)| \leq (\bar L+\bar C+M+ 1) |\alpha|.\]
The above inequality applied in the case that $\alpha$ is a geodesic between vertices $x$ and $y$ of $\hat\Gamma(G,\calp,S)$ implies that
\[ \dist_{\hat \Gamma(H,\calq, T)}(\hat q(x), \hat q(y)) \leq (\bar L+ \bar C+M+1)   \dist_{\hat \Gamma (G,\calp,S)} (x,y) \]
for any pair of vertices $x,y$ of $\hat\Gamma(G,\calp,S)$. By symmetry an analogous inequality holds for vertices of $\hat\Gamma(H,\calq)$. 
Since $\dot q$ is a bijection, the definition of $\hat q(\alpha)$ shows that $\alpha$   passes through a cone vertex $A$ if and only if $\hat q(\alpha)$  passes through the cone vertex $\dot q(A)$. We summarise this discussion in the following lemma. 
 
\begin{lemma}\label{lem:QI-HatGamma-2}
There are constants $\hat L\geq 1$ and $\hat C\geq 0$ such that:
  \begin{enumerate}
      \item The function $\hat q$ is a $(\hat L, \hat C)$-quasi-isometry from $\hat \Gamma (G, \calp, S)$ to $\hat \Gamma (H, \calq, T)$.
      \item Let $\alpha$ be a path in $\hat \Gamma(G, \calp, S)$.
      \begin{enumerate}
          \item For any $A\in G/\calp$,   $\alpha$  passes through the cone  vertex $A$ if and only if  $\hat q(\alpha)$ passes through the cone vertex $\dot{q}(A)$.
        \item $|\hat q(\alpha)| \leq \hat L\    |\alpha|$.
      \end{enumerate}
  \end{enumerate} 
 \end{lemma}

 We prove  the contrapositive of the second statement of the proposition. Suppose that $\hat \Gamma(G,\calp, S)$ is not fine at cone vertices. Then there is $P\in\calp$ such that $(T_P\hat\Gamma, \angle_P )$ is  not locally-finite.  Let $r>0$ and let  $\{g_i\} \subseteq P$ be an infinite subset such that $\angle_P (g_i,g_j) \leq r$ for every $i,j$. Let $\alpha_{i,j}$ be a path in $\hat\Gamma(G,\calp, S)$ from $g_i$ to $g_j$ of length at most $r$ that does not contain the cone vertex $P$. Let $Q$ denote the left coset $\dot{q}(P)$.  Let $\gamma_i$ be a geodesic in $\Gamma (H, T_0)$ from an element $h_i$ of $Q$ to $q(g_i)$ such that   $\dist_H(h_i, q(g_i)) =\dist_H(Q, q(g_i) )$. Since $q$ is a $(L,C,M)$ quasi-isometry of pairs, each $\gamma_i$ has length at most $M$.
 
Let us prove that the set $\{h_i\}$ is infinite. 
Suppose, for contradiction,  that $\{h_i\}$ is a finite set. Since $T_0$ is a finite generating set, $\Gamma(H, T_0)$ is a locally finite graph and hence it admits only finitely many paths of length at most $M$ with initial vertex in $\{h_i\}$. Since each $\gamma_i$ has length at most $M$ and initial vertex in $\{h_i\}$, it follows that the set $\{q(g_i)\}$ is finite and in particular, bounded. Since $q$ is a quasi-isometry $\Gamma(G,S_0)\to \Gamma(G,T_0)$, it follows that the set $\{g_i\}$ is a  bounded subset of vertices in the locally finite graph  $\Gamma(G,S_0)$, hence the set $\{g_i\}$ is finite, a contradiction.  

To conclude the proof, we show that $\hat \Gamma (H,\calq, T)$ is not fine at the cone vertex $Q$. Since $\{h_i\}$ is an infinite subset of $Q$, it is enough to show that  $\angle_Q(h_i, h_j)\leq r \hat L + M$ for any $i,j$. Consider the path $\beta_{i,j}$ from $h_i$ to $h_j$ obtained as the concatenation of the path $\gamma_i$ from $h_i$ to $\hat q(g_i)$, followed by the path $\hat q(\alpha_{i,j})$ from $\hat q(g_i)$ to $\hat q(g_j)$, and then the path $\bar \gamma_j$ from $\hat q(g_j)$ to $h_j$. The paths $\gamma_i$ and $\gamma_j$ have length bounded by $M$, and they do not contain the cone vertex $Q$ as they are paths in $\Gamma(H,T_0)$; the path $\hat q(\alpha_{i,j})$ has length at most $r\hat L$ and does not contain the cone vertex $Q$ by Lemma~\ref{lem:QI-HatGamma-2}.  Therefore $\angle_Q(h_i,h_j) \leq |\gamma_i|+|\hat q(\alpha_{i,j})|+|\gamma_j| \leq 2M+r\hat L$ as desired.
\end{proof}

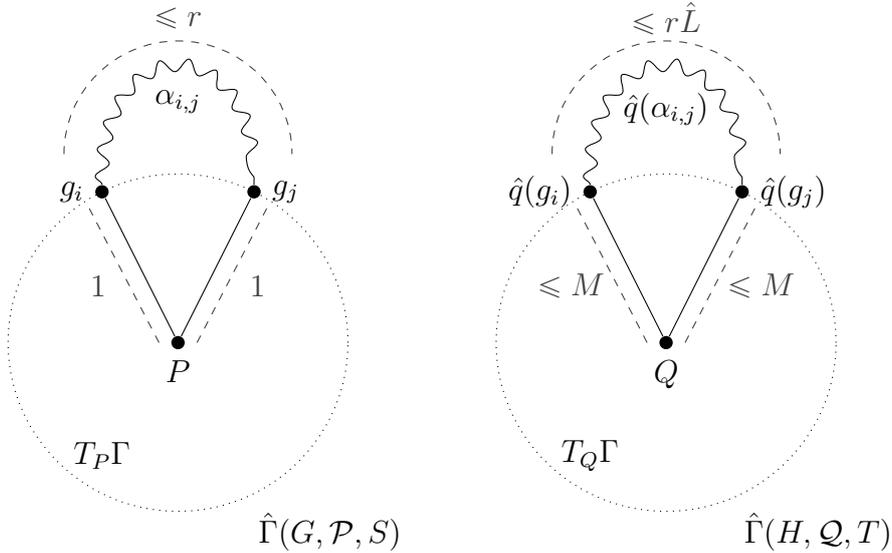
\begin{figure}[h]
    \centering
    \begin{tikzpicture}
    \node[circle,fill=black,inner sep=0pt,minimum size=5pt,label=below:{$P$}] at (0,0) (P) {};
    \draw[color=black, dotted] (P) circle (2.2361);
    \node at (-1,-1.5) (TPG) {$T_P\Gamma$};
    \node at (2,-2.5) (GGPS) {$\hat\Gamma(G,\calp,S)$};
    
    \node[circle,fill=black,inner sep=0pt,minimum size=5pt,label=left:{$g_i$}] at (-1,2) (gi) {};
    \node[circle,fill=black,inner sep=0pt,minimum size=5pt,label=right:{$g_j$}] at (1,2) (gj) {};
    \draw (P) --  (gi);
    \draw (P) -- (gj);
    \draw[draw=black, snake it] (gi) to[out=90,in=90, distance=2cm] (gj);
    \node at (0,3.2) (aij) {$\alpha_{i,j}$};
    
    \draw[color=black!75,dashed] (-1.5,2.5) arc (180:0:1.5);
    \node[color = black!75] at (0,4.3) (r) {$\leq r$};
    
    \draw[color=black!75, dashed] (P) + (-0.25,0) -- ($(P)+ (-0.25,0)!1.9cm!(gi) + (-0.25,0)$);
    \draw[color=black!75, dashed] (P) + (0.25,0) -- ($(P)+ (0.25,0)!1.9cm!(gj) + (0.25,0)$);
    \node[color=black!75] at (-1.05,0.75) (M1) {$1$};
    \node[color=black!75] at (1.05,0.75) (M2) {$1$};
    
    \end{tikzpicture} $\quad \quad$
    \begin{tikzpicture}
    \node[circle,fill=black,inner sep=0pt,minimum size=5pt,label=below:{$Q$}] at (0,0) (P) {};
    \draw[dotted] (P) circle (2.2361);
    \node at (-1,-1.5) (TPG) {$T_Q\Gamma$};
    \node at (2,-2.5) (GGPS) {$\hat\Gamma(H,\calq,T)$};
    
    \node[circle,fill=black,inner sep=0pt,minimum size=5pt,label=left:{$\hat q(g_i)$}] at (-1,2) (gi) {};
    \node[circle,fill=black,inner sep=0pt,minimum size=5pt,label=right:{$\hat q(g_j)$}] at (1,2) (gj) {};
    \draw (P) --  (gi);
    \draw (P) -- (gj);
    \draw[draw=black, snake it] (gi) to[out=90,in=90, distance=2cm] (gj);
    \node at (0,3.1) (aij) {$\hat q(\alpha_{i,j})$};
    
    \draw[color=black!75, dashed] (-1.5,2.5) arc (180:0:1.5);
    \node[color = black!75] at (0,4.3) (r) {$\leq r\hat L$};
    
    \draw[color=black!75, dashed] (P) + (-0.25,0) -- ($(P)+ (-0.25,0)!1.9cm!(gi) + (-0.25,0)$);
    \draw[color=black!75, dashed] (P) + (0.25,0) -- ($(P)+ (0.25,0)!1.9cm!(gj) + (0.25,0)$);
    \node[color=black!75] at (-1.25,0.75) (M1) {$\leq M$};
    \node[color=black!75] at (1.25,0.75) (M2) {$\leq M$};
    
    \end{tikzpicture}
    \caption{Illustration of the proof of Proposition~\ref{prop:FineQI-2}}
    \label{fig:my_label}
\end{figure}

The goal for the remainder of this section is to give algebraic conditions on $\calp$ and $\calq$ to ensure $\dot q$ is a bijection.  The following key definition will provide such a criteria.

\begin{definition}[Reduced Collection]
A collection of subgroups $\calp$ of a group $G$ is \emph{reduced} if  
for any $P,Q\in \calp$ and $g\in G$, then $P$ and $gQg^{-1}$ being commensurable subgroups  implies $P=Q$ and $g\in P$. 
\end{definition}

\begin{remark}
If $\calp$ is a reduced collection of subgroups of a group $G$, then $P=\Comm_G(P)$ for any $P\in\calp$.
\end{remark}

 \begin{proposition}\label{prop:DotqIsFunction}
 Let $q\colon (G, \mathcal{P})\to (H, \mathcal{Q})$ be a $(L,C,M)$-quasi-isometry of  pairs. Then 
 \begin{enumerate}
     \item $\dot{q}$ is a surjective function $G/\calp \to H/\calq$ if $\calq$ is reduced. 
     \item $\dot{q}$ is a bijection $G/\calp \to H/\calq$ if $\calp$ and $\calq$ are reduced. 
 \end{enumerate}
 \end{proposition}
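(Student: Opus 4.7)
The plan is to reduce both statements to a standard criterion for Hausdorff-finiteness of cosets. First I would establish the following lemma: for subgroups $P, Q$ of a finitely generated group $K$ and elements $a, b \in K$, the cosets $aP$ and $bQ$ lie at finite Hausdorff distance if and only if $P$ and $(a^{-1}b)Q(a^{-1}b)^{-1}$ are commensurable. The proof is standard: left translation by $a^{-1}$ is an isometry so $\Hdist_K(aP, bQ) = \Hdist_K(P, gQ)$ with $g = a^{-1}b$; and $\Hdist_K(gQ, gQg^{-1}) \leq |g|$ since $d(gq, gqg^{-1}) = |g|$ for every $q \in Q$; so the finiteness question reduces to $\Hdist_K(P, gQg^{-1}) < \infty$, which for two subgroups both containing $1$ in a finitely generated group is equivalent to commensurability. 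The consequence I would use is that if $\calr$ is a \emph{reduced} collection and $P, Q \in \calr$, then $\Hdist_K(aP, bQ) < \infty$ forces $P = Q$ and $a^{-1}b \in P$, i.e.\ $aP = bQ$.

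For part (1), surjectivity of the projection of $\dot q$ onto $G/\calp$ says every $A \in G/\calp$ admits some $B$ with $(A, B) \in \dot q$. If both $(A, B)$ and $(A, B')$ lie in $\dot q$, the triangle inequality gives $\Hdist_H(B, B') < 2M$; the lemma applied to the reduced collection $\calq$ forces $B = B'$. Hence $\dot q$ is a function, and surjectivity of $\dot q$ onto $H/\calq$ is exactly the surjectivity of the other projection of the relation.

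For part (2), I would establish injectivity by passing through a quasi-inverse. Let $\bar q \colon H \to G$ be an $(L', C')$-quasi-inverse of $q$ with $d_G(\bar q(q(g)), g) \leq K'$ for all $g \in G$. Suppose $\dot q(A) = \dot q(A') = B$. Then $\Hdist_H(q(A), q(A')) < 2M$ by the triangle inequality; applying $\bar q$ gives $\Hdist_G(\bar q(q(A)), \bar q(q(A'))) \leq 2L'M + C'$, using the routine fact that $\Hdist(\bar q(X), \bar q(Y)) \leq L' \Hdist(X, Y) + C'$ for any two subsets $X, Y \subseteq H$. Combining with $\Hdist_G(A, \bar q(q(A))) \leq K'$ and $\Hdist_G(A', \bar q(q(A'))) \leq K'$ yields $\Hdist_G(A, A') < \infty$, and the lemma applied to the reduced collection $\calp$ concludes $A = A'$.

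There is no substantial obstacle: the whole statement reduces to the commensurability/reduced-collection dictionary encoded in the lemma, together with routine bookkeeping of quasi-isometry constants. The only point requiring care is ensuring Hausdorff distance between cosets behaves well under $\bar q$, which is immediate from the definition of quasi-isometry.
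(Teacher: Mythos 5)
Your proposal is correct and follows essentially the same route as the paper: your auxiliary lemma is exactly the paper's Lemma~\ref{lem:CommensurableSubgps} (finite Hausdorff distance between cosets is equivalent to commensurability of the corresponding conjugates, itself resting on the standard coarse-intersection fact, Lemma~\ref{lem:Perpendicular}), and part (1) is proved by the identical triangle-inequality-plus-reducedness argument. For part (2) the paper merely remarks that it follows from part (1); your quasi-inverse argument for injectivity is a correct way of filling in that implicit symmetric step (one can also avoid the quasi-inverse by using the lower quasi-isometry bound for $q$ directly to get $\Hdist_G(A,A')<\infty$).
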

 There are different  versions of the following lemma in the literature: \cite[Lemma 2.2]{MSW11},  
\cite[Lemma 4.7]{MP07} and  \cite[Proposition 9.4]{HK10}, the statement below is taken from the later reference.  For $A\subset G$, $\mathcal{N}_k(A)$ denotes the closed neighborhood of $A$ in $(G,\dist_G)$. 
 \begin{lemma}\label{lem:Perpendicular}
Let $G$ be a finitely generated group with word metric $\dist_G$. Let $gP$ and $fQ$ are arbitrary left cosets of subgroups of $G$. Then for any $k>0$ there is $M>0$ such that 
\[ \mathcal{N}_k(gP)\cap \mathcal{N}_k(fQ)  \subseteq \mathcal{N}_M(gPg^{-1}\cap fQf^{-1}).\]
\end{lemma}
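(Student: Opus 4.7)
The plan is to reduce the statement to a version for subgroups via a Hausdorff-distance estimate between a coset and its conjugate subgroup, and then to exploit finite generation of $G$ to handle the subgroup case.

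First, I would observe that $gP$ lies at Hausdorff distance at most $|g|$ from the conjugate subgroup $gPg^{-1}$ in $(G,\dist_G)$. Indeed, the computation $\dist(gp, gpg^{-1}) = |p^{-1}g^{-1}gpg^{-1}| = |g^{-1}|$ shows each $gp\in gP$ is within distance $|g|$ of an element of $gPg^{-1}$, and the reverse inequality is symmetric. Applying the analogous bound to $fQ$ and $fQf^{-1}$ yields
\[
    \mathcal{N}_k(gP)\cap \mathcal{N}_k(fQ) \;\subseteq\; \mathcal{N}_{k+|g|}(gPg^{-1})\cap \mathcal{N}_{k+|f|}(fQf^{-1}).
\]
It therefore suffices to prove the following subgroup analogue: for any two subgroups $H_1,H_2\leq G$ and any $K>0$ there exists $M_0>0$ with $\mathcal{N}_K(H_1)\cap \mathcal{N}_K(H_2) \subseteq \mathcal{N}_{M_0}(H_1\cap H_2)$.

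For this subgroup version, take $x \in \mathcal{N}_K(H_1)\cap \mathcal{N}_K(H_2)$ and write $x = h_1 u = h_2 v$ with $h_i\in H_i$ and $|u|,|v|\leq K$. Then $h_2^{-1}h_1 = vu^{-1}$ lies in the ball $B_G(2K)$ around the identity, so $h_1 \in \bigcup_{w\in B_G(2K)} (H_1\cap H_2 w)$. A direct check shows that whenever $H_1\cap H_2 w$ is nonempty it is a single left coset of $H_1\cap H_2$: if $a,b$ both lie in it, then $ab^{-1}\in H_1$ (both factors in $H_1$) and $ab^{-1}\in H_2$ (the $w$'s cancel), so $a\in(H_1\cap H_2)b$. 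Finite generation of $G$ makes $B_G(2K)$ a finite set, so $h_1$ ranges over only finitely many left cosets of $H_1\cap H_2$; pick a representative $c_w$ for each, and set $M_0 := K + \max_{w} |c_w|$, a finite max over finitely many cosets. Writing $h_1 = h_0 c_w$ with $h_0\in H_1\cap H_2$, one computes $\dist(x,h_0) = |h_0^{-1}x| = |c_w u| \leq |c_w|+K \leq M_0$, so $x \in \mathcal{N}_{M_0}(H_1\cap H_2)$. Combining with the reduction step gives the lemma with $M := k + M_0 + \max(|g|,|f|)$ (after re-absorbing the $k+|g|,k+|f|$ into the constant).

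The main obstacle is conceptual rather than technical: one might hope to bound the representative lengths $|c_w|$ uniformly in $|w|$, but without further structure on $G$ (such as relative hyperbolicity) this cannot be achieved. Instead, one simply allows $M$ to depend on the cosets $gP,fQ$, through $|g|,|f|$ and the finitely many representatives $c_w$ of cosets of $gPg^{-1}\cap fQf^{-1}$ that can arise. With this concession, the remainder of the argument is routine coset arithmetic combined with the fundamental fact that a finite-radius word-metric ball in a finitely generated group is finite.
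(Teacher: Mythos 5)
Your proof is correct. Note that the paper does not actually prove this lemma: it records it as a known result, citing \cite[Lemma 2.2]{MSW11}, \cite[Lemma 4.7]{MP07} and \cite[Proposition 9.4]{HK10} (the statement being taken from the last reference), so there is no in-paper argument to compare yours against. Your self-contained proof is essentially the standard coset-counting argument underlying those references: the reduction to the subgroup case via $\Hdist(gP,gPg^{-1})\leq |g|$ is valid, the identity $h_2^{-1}h_1=vu^{-1}\in B_G(2K)$ correctly places $h_1$ in one of the sets $H_1\cap H_2w$ with $w$ ranging over the finite ball $B_G(2K)$, and your observation that each nonempty $H_1\cap H_2w$ lies in a single left coset of $H_1\cap H_2$ lets you take the finite maximum $M_0=K+\max_w|c_w|$; the final estimate $|h_0^{-1}x|=|c_wu|\leq M_0$ closes the argument. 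Your closing remark is also the right one: the constant $M$ is allowed to depend on the particular cosets $gP$, $fQ$ (through $|g|$, $|f|$ and the representatives $c_w$), and the statement's quantifiers ask for nothing stronger, so no uniformity in $w$ is needed.
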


 \begin{lemma}\label{lem:CommensurableSubgps}
 Let $G$ be a finitely generated group with a word metric $\dist_G$, let  $P$ and $Q$ be subgroups, and let $g\in G$. 
 Then $P$ and $gQg^{-1}$ are commensurable subgroups if and only if  $\Hdist_G(P,gQ)<\infty$.
 \end{lemma}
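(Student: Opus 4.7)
The plan is to establish the two directions of the biconditional separately: the forward direction is a direct calculation from commensurability, while the reverse direction relies crucially on \Cref{lem:Perpendicular}.

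For the forward direction, I assume $P$ and $gQg^{-1}$ are commensurable, so $K = P \cap gQg^{-1}$ has finite index in each. I will first show $\Hdist_G(P, gQg^{-1}) < \infty$ by writing $P$ as a finite union of right cosets $P = Kp_1 \cup \cdots \cup Kp_n$: every $p = kp_i \in P$ satisfies $\dist_G(p,k) = |p_i^{-1}| = |p_i|$, and since $k \in K \subseteq gQg^{-1}$, this places $p$ within distance $\max_i |p_i|$ of $gQg^{-1}$. A symmetric argument using right cosets of $K$ in $gQg^{-1}$ gives the reverse inclusion. Next, $\Hdist_G(gQg^{-1}, gQ) \leq |g|$: the pairing $gqg^{-1} \leftrightarrow gq$ satisfies $\dist_G(gqg^{-1},gq) = |g|$. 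The triangle inequality then yields $\Hdist_G(P, gQ) < \infty$.

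For the reverse direction, I assume $\Hdist_G(P, gQ) = k < \infty$. Viewing $P = 1 \cdot P$ and $gQ = g \cdot Q$ as left cosets and applying \Cref{lem:Perpendicular}, I obtain $M \geq 0$ with
\[\mathcal{N}_k(P) \cap \mathcal{N}_k(gQ) \subseteq \mathcal{N}_M(P \cap gQg^{-1}).\]
Setting $K = P \cap gQg^{-1}$, both $P$ and $gQ$ lie inside the left-hand side, and hence inside $\mathcal{N}_M(K)$. To conclude $[P:K] < \infty$, I will observe that for each $p \in P$ one may select $k \in K$ with $b := k^{-1}p \in P$ satisfying $\dist_G(b,1) \leq M$; since $G$ is finitely generated, the intersection of $P$ with the ball of radius $M$ around $1$ is finite, so $b$ ranges over a finite set $F_P$ and $P = K \cdot F_P$ is a finite union of right cosets of $K$. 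For $[gQg^{-1}:K] < \infty$, I observe similarly that $gQ \subseteq \mathcal{N}_M(K)$; combining with the $|g|$-bound $\Hdist_G(gQ, gQg^{-1}) \leq |g|$ established above gives $gQg^{-1} \subseteq \mathcal{N}_{M+|g|}(K)$, so the same coset-counting argument yields finite index. Therefore $K$ has finite index in both $P$ and $gQg^{-1}$, proving commensurability.

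The main subtlety, underlying both directions, is the distinction between left and right multiplication in the word metric on $G$: left multiplication by a fixed element is an isometry of $(G,\dist_G)$, while right multiplication by $g$ can displace points by up to $|g|$. This is exactly why $gQ$ and $gQg^{-1}$ differ as subsets yet are Hausdorff-close, and why in the reverse direction one must pass through $gQg^{-1}$ to apply \Cref{lem:Perpendicular}. This same asymmetry forces the use of \emph{right} cosets (rather than left) in the Hausdorff-distance estimate for commensurable subgroups: the representation $p = p_i k$ would give $\dist_G(p,p_i) = |k|$, which is unbounded as $k$ ranges over $K$, whereas $p = k p_i$ gives $\dist_G(p,k) = |p_i|$, uniformly bounded.
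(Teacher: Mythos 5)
Your proof is correct and follows essentially the same route as the paper's: the forward direction via the finite-index coset bound, the estimate $\Hdist_G(gQg^{-1},gQ)\leq \dist_G(1,g)$, and the triangle inequality for $\Hdist$, and the reverse direction via \Cref{lem:Perpendicular} together with local finiteness of the word metric. You merely spell out the details (the right-coset argument and the passage from $gQ$ to $gQg^{-1}$) that the paper leaves implicit under ``it follows'' and ``in an analogous way.''
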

 \begin{proof}
 Suppose $K$ is a finite index subgroup of $P$ and $gQg^{-1}$. Then $\Hdist(K,P)<\infty$ and $\Hdist(K, gQg^{-1})$ are finite. Since $\Hdist(gQg^{-1},gQ) \leq \dist(1,g)<\infty$, it follows that
 \[\Hdist(P,gQ)\leq \Hdist(P,K)+\Hdist(K,gQg^{-1})+\Hdist(gQg^{-1},gQ)<\infty.\]
 
Conversely, suppose $\Hdist(P,gQ)$ is finite. Then  $P \subset  P\cap \caln_k(gQ)$ for some $k$, and therefore Lemma~\ref{lem:Perpendicular} implies that $P \subseteq \caln_M(P\cap gQg^{-1})$ for some $M$. It follows that $P\cap gQg^{-1}$ is a finite index subgroup of $P$. In an analogous way one shows that $P\cap gQg^{-1}$ is a finite index subgroup of $gQg^{-1}$. Whence, $P$ and $gQg^{-1}$ are commensurable subgroups.
 \end{proof}

 \begin{proof}[Proof of Proposition~\ref{prop:DotqIsFunction}]
 To prove the first statement, we only need to show that the relation $\dot{q}$ is a function. Suppose that $\calq$ is reduced and the pairs $(A, h_1Q_1)$ and $(A,h_2Q_2)$ belong to $\dot{q}$. Then   $h_1Q_1,h_2Q_2\in H/\calq$ and $\Hdist_H (h_1Q_1,h_2Q_2)<\infty$.  Lemma~\ref{lem:CommensurableSubgps} implies that $h_1Q_1h_1^{-1}$ and $h_2Q_2h_2^{-1}$ are commensurable subgroups. Since $\calq$ is reduced, it follows that $Q_1=Q_2$ and $h_2\in h_1Q_1$. In particular, $h_1Q_1=h_2Q_2$ and hence $\dot{q}$ is a function. The second statement of the lemma follows from the first one. 
 \end{proof}


We are now ready to prove \Cref{thmx:fine} from the introduction.

\begin{thm}[\Cref{thmx:fine}]\label{thm:fine}
Let $q\colon (G, \calp ) \to (H, \calq)$ be a quasi-isometry of pairs.
Suppose  $\calp$ and $\calq$ are reduced finite collections. Then there is an induced  quasi-isometry of graphs $\hat q\colon \hat \Gamma (G,\calp) \to \hat \Gamma (H,\calq)$, and if $\hat \Gamma (H,\calq)$ is a fine graph then $\hat \Gamma (G,\calp)$ is a fine graph.
\end{thm}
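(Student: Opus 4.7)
The plan is to assemble the pieces already built up in this section, essentially invoking Proposition~\ref{prop:DotqIsFunction} and Proposition~\ref{prop:FineQI-2} (equivalently Corollary~\ref{cor:FineQI}) with one small leftover check.

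First, since both $\calp$ and $\calq$ are reduced, Proposition~\ref{prop:DotqIsFunction}(2) shows that the relation $\dot{q} \subseteq G/\calp \times H/\calq$ is in fact a bijection. Fix finite generating sets $S_0 \subseteq S$ of $G$ and $T_0 \subseteq T$ of $H$ witnessing that $q$ is a quasi-isometry of pairs. By Proposition~\ref{prop:FineQI-2}(1) the map $\hat q = q \cup \dot q$ is then a quasi-isometry $\hat\Gamma(G,\calp,S) \to \hat\Gamma(H,\calq,T)$, and by Proposition~\ref{prop:FineQI-2}(2) fineness of $\hat\Gamma(H,\calq)$ at cone vertices passes down to fineness of $\hat\Gamma(G,\calp)$ at cone vertices.

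The only step remaining is to verify fineness of $\hat\Gamma(G,\calp,S)$ at the group-element vertices. This is automatic from finiteness: for $g \in G$, the set $T_g\hat\Gamma$ consists of the vertices $gs$ with $s\in S$, together with the cone vertices $gP$ with $P\in\calp$. Since both $S$ (we may take the finite $S_0$) and $\calp$ are finite, $T_g\hat\Gamma$ itself is finite, so $(T_g\hat\Gamma,\angle_g)$ is trivially locally finite. Combined with fineness at cone vertices this gives that $\hat\Gamma(G,\calp)$ is a fine graph, completing the proof.

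The substantive content is packaged into Proposition~\ref{prop:FineQI-2}, so no genuine obstacle remains at this stage; the main obstacle was already faced there, namely showing that an infinite collection of neighbours of a cone vertex $P$ with bounded angle metric forces, via the quasi-isometry of pairs and local finiteness of the ordinary Cayley graph $\Gamma(H,T_0)$, an analogous infinite collection at $\dot q(P)$.
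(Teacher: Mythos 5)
Your proof is correct and follows essentially the same route as the paper, which deduces the theorem by combining Proposition~\ref{prop:DotqIsFunction} with Corollary~\ref{cor:FineQI} (itself a repackaging of Proposition~\ref{prop:FineQI-2}). Your explicit check that fineness at group-element vertices is automatic from local finiteness (finite $S_0$ and finite $\calp$) is exactly the point left implicit in passing from ``fine at cone vertices'' to ``fine graph,'' so nothing is missing.
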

\begin{proof}
The result follows from applying \Cref{prop:DotqIsFunction} to \Cref{cor:FineQI}.
\end{proof}

\section{Almost malnormal collections and quasi-isometries of pairs}
In this section we will prove \Cref{thmx:malnormal} from the introduction.  First, we introduce a refinement $\calp^\ast$ of a collection $\calp$.  In \Cref{lem:QI-Refinement} we show under mild hypothesis $(G,\calp)$ and $(G,\calp^\ast)$ are quasi-isometric pairs under the identity map.

\begin{definition}
 Let $\calp$ be a collection of subgroups of group $G$.  A \emph{refinement} $\calp^\ast$ of $\calp$ is a set of representatives of conjugacy classes of the collection of subgroups \[\{\Comm_G(gPg^{-1}) \colon P\in\calp \text{ and } g\in G \}.\] 
 \end{definition} 
 
 \begin{remark}
 Observe that for a collection of subgroups $\calp$ of a group $G$, there is a refinement $\calp^\ast$ such that each of its elements are of the form $\Comm_G(P)$ for some $P\in \calp$. This is a consequence of  $\Comm_G(gPg^{-1})=g\Comm_G(P)g^{-1}$ for each subgroup $P$ of $G$.
 \end{remark}

\begin{proposition}\label{lem:QI-Refinement}
Let $\calp^*$ be a refinement of a finite collection of subgroups $\calp$ of a finitely generated group $G$. If $P$ is a finite index subgroup of $\Comm_G(P)$ for every $P\in\calp$, then $(G,\calp)$ and $(G,\calp^*)$ are quasi-isometric pairs via the identity map on $G$. 
\end{proposition}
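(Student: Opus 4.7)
The plan is to verify that the identity map $\mathrm{id}\colon G\to G$ is an $(L,C,M)$-quasi-isometry of pairs $(G,\calp)\to(G,\calp^*)$, with $(L,C)=(1,0)$ and an appropriate $M\geq 0$.  Since $\mathrm{id}$ is trivially a $(1,0)$-quasi-isometry of $(G,\dist_G)$, the only task is to produce an $M$ such that the relation
\[\dot{\mathrm{id}}=\{(A,B)\in G/\calp\times G/\calp^*\colon \Hdist_G(A,B)<M\}\]
has surjective projections onto both factors.  Note that $\calp^*$ is automatically finite, because its elements are conjugacy-class representatives drawn from the collection of commensurators of conjugates of elements of $\calp$, so there are at most $|\calp|$ of them.

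The first step will be to bound $\Hdist_G(P,\Comm_G(P))$ uniformly over $P\in\calp$.  Writing $\Comm_G(P)=t_1P\cup\cdots\cup t_n P$ with $t_i\in\Comm_G(P)$, each $t_i$ conjugates $P$ to a commensurable subgroup, so \Cref{lem:CommensurableSubgps} gives $\Hdist_G(P,t_i P)<\infty$; taking maxima over the finitely many cosets and over the finite set $\calp$ yields a uniform $r<\infty$.  Next, invoking the definition of refinement, I fix for every $P\in\calp$ an element $P^\sharp\in\calp^*$ and $h_P\in G$ with $\Comm_G(P)=h_P\,P^\sharp\,h_P^{-1}$, and conversely for every $P^*\in\calp^*$ an element $P_0\in\calp$ and $g_{P^*}\in G$ with $P^*=g_{P^*}\,\Comm_G(P_0)\,g_{P^*}^{-1}$.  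The finiteness of $\calp$ and $\calp^*$ bounds all $|h_P|$ and $|g_{P^*}|$ by a common constant $K$, and I then set $M:=r+K$.

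To check surjectivity onto $G/\calp^*$, I will show that for any $fP^*\in G/\calp^*$ the coset $fg_{P^*}P_0\in G/\calp$ is in relation with $fP^*$.  The key identity is $fP^*=(fg_{P^*}\Comm_G(P_0))\,g_{P^*}^{-1}$: left-multiplication by $fg_{P^*}$ is an isometry of $(G,\dist_G)$, while right-multiplication by $g_{P^*}^{-1}$ moves every point a distance at most $|g_{P^*}|\leq K$.  Together with the triangle inequality and the first step, this gives
\[\Hdist_G(fg_{P^*}P_0,\,fP^*)\leq \Hdist_G(P_0,\Comm_G(P_0))+|g_{P^*}|\leq r+K=M.\]
Surjectivity onto $G/\calp$ is symmetric, sending $gP$ to $gh_PP^\sharp\in G/\calp^*$ and applying the same two-step estimate with $h_P$ in place of $g_{P^*}$.

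The main subtlety is that the word metric on $G$ is left-invariant but not right-invariant, so Hausdorff distance is not generally preserved by right-multiplication; however, right-multiplication by a fixed element $a$ moves every point at most $|a|$, which is all the argument requires.  Finiteness of $\calp$ uniformises all the conjugators $h_P,g_{P^*}$, and the only substantive input from the rest of the paper is \Cref{lem:CommensurableSubgps}, which combined with the finite-index hypothesis $[\Comm_G(P):P]<\infty$ supplies the finite Hausdorff distance $\Hdist_G(P,\Comm_G(P))<\infty$.
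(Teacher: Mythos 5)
Your argument is correct and follows essentially the same route as the paper's proof: take the identity as a $(1,0)$-quasi-isometry, bound $\Hdist_G(P,\Comm_G(P))$ using the finite-index hypothesis, absorb the finitely many conjugators relating $\Comm_G(P)$ to representatives in $\calp^*$ into a uniform constant, and verify both surjectivity conditions; the only differences are cosmetic (you carry the conjugating elements $h_P,g_{P^*}$ explicitly where the paper normalises $\calp^*$ so that its elements are the $\Comm_G(P)$ themselves, and you justify $\Hdist_G(P,\Comm_G(P))<\infty$ via \Cref{lem:CommensurableSubgps}). The only nitpick is that since the relation $\dot{q}$ requires a strict inequality $\Hdist_G(A,B)<M$, you should take $M$ strictly larger than $r+K$, e.g.\ $M=r+K+1$, exactly as the paper takes $M>M_1+M_2$.
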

\begin{proof}
Let $\calp=\{P_1,\ldots,P_k\}$. By the previous remark we may assume that every subgroup in $\calp^\ast$ is of the form $\Comm_G(P)$ for some $P\in\calp$. Let $q\colon G\to G$ be the identity map. Since $q$ is a $(1,0)$-quasi-isometry, it is enough to show that there is $M>0$ such that the relation 
\[\dot{q}=\{ (A,B)\in G/\calp \times G/\calp^* \colon \Hdist(A,B)<M \}\]
satisfies that it projects surjectively on $G/\calp$ and on $G/\calp^*$.  

For any $P_i\in \calp$, note that $\Hdist(P_i, \Comm_G(P_i))<\infty$ since $P_i$ has finite index in $\Comm_G(P_i)$. Let  
\[M_1=\max\{\Hdist(P_i,\Comm_G(P_i)) \colon 1\leq i\leq k\}.\]
By definition of $\calp^*$, for any $P_i$, there is $Q_i\in \calp^*$ and $g_i\in G$ such that $\Comm_G(P_i)=g_iQ_ig_i^{-1}$. In particular $\Hdist(\Comm_G(P_i), g_iQ_i)$ is finite. Let
\[ M_2 = \max\{\Hdist(\Comm_G(P_i), g_iQ_i) \colon 1\leq i\leq k\}.\]

Let $M>M_1+M_2$.
Then for any $gP_i\in G/\calp$, $(gP_i, gg_iQ_i)\in \dot{q}$. On the other hand, if $gQ\in G/\calp^*$ then $Q=\Comm_G(P)$ for some $P\in \calp$ and hence $(gP,gQ)\in \dot{q}$. 
\end{proof}

\begin{remark}
Note that in the previous proposition if $\calp$ is infinite the map $\dot q:G/\calp\to G/\calp^\ast$ must  be finite-to-one.  Otherwise after conjugating, there will be a sequence of subgroups $P_i\leq \Comm_G(P_0)$ such that $|\Comm_G(P_0):P_i|\to\infty$, in particular, the sequence of Hausdorff distances $\Hdist(\Comm_G(P_0),P_i)$ is not bounded.
\end{remark}

\begin{definition}\label{def:malnormal}
A collection of subgroups $\calp$ of a group $G$ is \emph{almost malnormal} if for any $P,P'\in\calp$ and $g\in G$, either $gPg^{-1} \cap P'$ is finite, or $P=P'$ and $g\in P$. 
\end{definition}

\begin{remark}
If $\calp$ is an almost malnormal collection of infinite subgroups of a group $G$, then $\calp$ is reduced. 
\end{remark} 

\begin{remark}
If a group $G$ acts by automorphisms on a fine graph $\Gamma$ such that edge stabilizers are finite and $\calp$ is a collection of representatives of conjugacy classes of vertex stabilizers, then $\calp$ is an almost malnormal collection.
\end{remark}

\begin{proposition}\label{prop:MalnormalityQIinvariance}
 Let $q\colon (G, \mathcal{P})\to (H, \mathcal{Q})$ be a quasi-isometry of pairs.  If $\calq$ is an almost malnormal finite collection of infinite  subgroups and $\calp$ is a finite collection, then any refinement $\calp^*$ of $\calp$ is almost malnormal.
\end{proposition}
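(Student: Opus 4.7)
I would proceed by contradiction. Suppose $\calp^*$ is not almost malnormal; there exist $P_1^*, P_2^* \in \calp^*$ and $g \in G$ such that $K := g P_1^* g^{-1} \cap P_2^*$ is infinite, yet $(P_1^*, P_2^*, g)$ fails the dichotomy. Write $P_i^* = g_i \Comm_G(P_i) g_i^{-1}$ with $P_i \in \calp$, $g_i \in G$, and set $A := gg_1 P_1 (gg_1)^{-1}$, $B := g_2 P_2 g_2^{-1}$, so that $g P_1^* g^{-1} = \Comm_G(A)$ and $P_2^* = \Comm_G(B)$. Since $\calq$ is reduced (being almost malnormal of infinite subgroups), Proposition~\ref{prop:DotqIsFunction} yields a function $\dot q \colon G/\calp \to H/\calq$; write $h_1 Q_1 := \dot q(gg_1 P_1)$ and $h_2 Q_2 := \dot q(g_2 P_2)$.

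The crux is to prove $h_1 Q_1 = h_2 Q_2$. For every $h \in K$, the inclusions $h \in \Comm_G(A)$ and $h \in \Comm_G(B)$ together with Lemma~\ref{lem:CommensurableSubgps} give $\Hdist_G(h \cdot gg_1 P_1,\, gg_1 P_1) < \infty$ and similarly for the $B$-side. Transporting these bounds across $q$ and invoking Lemma~\ref{lem:CommensurableSubgps} together with the almost malnormality of $\calq$ (which forces $\Comm_H(Q) = Q$ for each $Q \in \calq$, hence that $\calq$-cosets at finite Hausdorff distance must coincide), one obtains $\dot q(hgg_1 P_1) = h_1 Q_1$ and $\dot q(hg_2 P_2) = h_2 Q_2$. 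Therefore $q(hgg_1) \in \mathcal{N}_M(h_1 Q_1)$ and $q(hg_2) \in \mathcal{N}_M(h_2 Q_2)$, and since $\dist_G(hgg_1, hg_2) = \dist_G(gg_1, g_2)$ is uniform in $h$, there is $M'$ with $q(hg_2) \in \mathcal{N}_{M'}(h_1 Q_1)$ for every $h \in K$. If $h_1 Q_1 \neq h_2 Q_2$, almost malnormality of $\calq$ makes $h_1 Q_1 h_1^{-1} \cap h_2 Q_2 h_2^{-1}$ finite, and Lemma~\ref{lem:Perpendicular} then confines $\mathcal{N}_M(h_2 Q_2) \cap \mathcal{N}_{M'}(h_1 Q_1)$ to a bounded set, contradicting the unboundedness of $\{q(hg_2) : h \in K\}$ (which follows from $K$ being infinite and $q$ a quasi-isometry). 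Hence $h_1 Q_1 = h_2 Q_2$, and applying a quasi-inverse of $q$ then gives $\Hdist_G(gg_1 P_1, g_2 P_2) < \infty$; Lemma~\ref{lem:CommensurableSubgps} forces $A$ and $B$ to be commensurable, so $\Comm_G(A) = \Comm_G(B)$, i.e., $g P_1^* g^{-1} = P_2^*$.

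Since $\calp^*$ is a set of representatives of distinct conjugacy classes and $P_1^*, P_2^*$ are now $G$-conjugate via $g$, we must have $P_1^* = P_2^* =: P^*$. To recover the missing $g \in P^*$, I would re-apply the preceding argument with a uniform choice of representation $(P_1, g_1) = (P_2, g_2) = (P_0, g_0)$ satisfying $P^* = g_0 \Comm_G(P_0) g_0^{-1}$; this is legitimate because the infinite-intersection hypothesis $K = g P^* g^{-1} \cap P^*$ depends only on $P^*$ and $g$, not on the choice of representatives. The conclusion becomes that $A' := gg_0 P_0 (gg_0)^{-1}$ and $B' := g_0 P_0 g_0^{-1}$ are commensurable, and since $A' = gB' g^{-1}$, this is exactly $g \in \Comm_G(B') = g_0 \Comm_G(P_0) g_0^{-1} = P^*$, contradicting the original assumption. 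The principal obstacle is the core contradiction in the second paragraph, where one must convert an algebraic hypothesis of infinite intersection in $G$ into an unbounded sequence of points in $H$ trapped inside a bounded region; this is where almost malnormality of $\calq$ is used decisively, through Lemma~\ref{lem:Perpendicular} and the commensurability-Hausdorff-distance correspondence of Lemma~\ref{lem:CommensurableSubgps}.
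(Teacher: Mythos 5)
Your proof is correct, but it follows a genuinely different route from the paper. The paper first transfers the finite-index-in-commensurator property from $\calq$ to $\calp$ (Lemma~\ref{lem:FiniteIndex}), uses Proposition~\ref{lem:QI-Refinement} to get a quasi-isometry of pairs $(G,\calp^*)\to(H,\calq)$, shows $\calp^*$ is reduced (Lemma~\ref{lem:RefinementImpliesReduce}) so that the induced map on cosets is a bijection, and then verifies the \emph{geometric} characterisation of almost malnormality (Lemma~\ref{lem:GeomMalnormal}) for $\calp^*$ by transporting coarse intersections of cosets through the quasi-isometry. You instead argue by contradiction directly at the level of $\dot q\colon G/\calp\to H/\calq$: from an element $h$ of the infinite intersection you produce uniformly many points $q(hg_2)$ trapped in $\mathcal{N}_{M'}(h_1Q_1)\cap\mathcal{N}_{M'}(h_2Q_2)$, and Lemma~\ref{lem:Perpendicular} plus almost malnormality of $\calq$ bounds this set, forcing $h_1Q_1=h_2Q_2$ and hence commensurability of the relevant conjugates in $G$; your uniform-representative re-run to extract $g\in P^*$ from $\Comm_G(g_0P_0g_0^{-1})=P^*$ is a nice touch, since it sidesteps the fact that $gP^*g^{-1}=P^*$ alone does not give $g\in P^*$ without knowing $\Comm_G(P^*)=P^*$. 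What each approach buys: the paper's modular route reuses its lemmas to prove the stronger Theorem~\ref{thm:malnormal} (in particular that $q\colon(G,\calp^*)\to(H,\calq)$ is again a quasi-isometry of pairs), whereas your argument is more self-contained and does not use finiteness of $\calp$ or the finite-index lemma at all, but yields only the almost malnormality statement. One small imprecision: the parenthetical claim that almost malnormality forces $\Comm_H(Q)=Q$ and ``hence'' cosets of $\calq$ at finite Hausdorff distance coincide is not quite the right justification --- for two \emph{distinct} $Q_1',Q_1\in\calq$ you need the full reduced condition (which almost malnormality of a collection of infinite subgroups does give, and which you invoke at the outset), not merely self-commensuration of each member.
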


The proof of Proposition~\ref{prop:MalnormalityQIinvariance} relies on the following lemmas. 

\begin{lemma}\label{lem:RefinementImpliesReduce}
Let $\calp$ be a collection of subgroups of a group $G$.
Suppose $P$ is a finite index subgroup of $\Comm_G(P)$ for every $P\in\calp$. Then any  refinement $\calp^*$ of $\calp$ is a reduced collection.
\end{lemma}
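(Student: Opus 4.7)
The plan is to reduce the reducedness of $\calp^*$ to two key intermediate facts:
\textbf{(i)} every $L \in \calp^*$ is self-commensurating, i.e., $\Comm_G(L) = L$; and
\textbf{(ii)} commensurable subgroups of $G$ share the same commensurator.

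I would first dispatch (ii), which is standard: if $A$ and $B$ are commensurable subgroups of $G$, then conjugation carries commensurability classes to commensurability classes, and commensurability is a transitive relation on subgroups, so $\Comm_G(A) = \Comm_G(B)$.

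Next I would prove (i), where the hypothesis on $\calp$ is used. By definition of the refinement, every $L\in\calp^*$ has the form $L = \Comm_G(gPg^{-1}) = g\Comm_G(P)g^{-1}$ for some $P\in\calp$ and $g\in G$. Setting $P' = gPg^{-1}$, conjugation preserves finite index, so $P'$ is a finite-index subgroup of $L$. For any $h\in\Comm_G(L)$, I would chase indices: $hP'h^{-1}$ has finite index in $hLh^{-1}$; intersecting with $L$ and then with $P'$ (using that $P'\cap L = P'$ has finite index in $L$, and that $L\cap hLh^{-1}$ has finite index in each) shows that $P'\cap hP'h^{-1}$ has finite index in both $P'$ and $hP'h^{-1}$. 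Hence $h\in\Comm_G(P') = L$, so $\Comm_G(L)\subseteq L$; the reverse inclusion is trivial.

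With (i) and (ii) in hand, the reducedness of $\calp^*$ follows quickly. Suppose $P',Q'\in\calp^*$ and $P'$ is commensurable with $gQ'g^{-1}$. Applying (ii), $\Comm_G(P') = \Comm_G(gQ'g^{-1}) = g\Comm_G(Q')g^{-1}$, and by (i) both sides simplify, giving $P' = gQ'g^{-1}$. Hence $P'$ and $Q'$ lie in the same $G$-conjugacy class, so by the definition of the refinement $\calp^*$ as a set of representatives of distinct conjugacy classes, $P' = Q'$. Finally $gP'g^{-1} = P'$ means $g \in N_G(P') \subseteq \Comm_G(P') = P'$, which is the remaining condition for reducedness.

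The main obstacle I anticipate is the index bookkeeping in step (i): one must juggle three finite-index inclusions ($P'\leq L$, $hP'h^{-1}\leq hLh^{-1}$, and $L\cap hLh^{-1}$ inside each factor) to conclude that $P'\cap hP'h^{-1}$ has finite index in $P'$. Everything else is formal manipulation with commensurators and the definition of the refinement.
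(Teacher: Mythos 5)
Your proof is correct and follows essentially the same route as the paper, whose argument rests on the same two facts: commensurable subgroups have equal commensurators, and (via the finite-index hypothesis) each $\Comm_G(P)$, hence each element of $\calp^*$, is its own commensurator. The only cosmetic difference is that your step (i) is established by index-chasing, although it follows at once from (ii) since $P'=gPg^{-1}$ has finite index in, hence is commensurable with, $L$; you also spell out the final step ($P'=Q'$ by the choice of conjugacy-class representatives and $g\in N_G(P')\subseteq\Comm_G(P')=P'$), which the paper leaves implicit.
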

\begin{proof}
Since commensurable subgroups have equal commensurator,
\[\Comm_G(\Comm_G(P))  =\Comm_G(P)\] for every $P\in\calp$.  
Let $P_1, P_2 \in \calp$ such that $\Comm_G(P_1)$ and $\Comm_G(P_2)$ are in $\calp^*$, and let  $g\in G$. Suppose $\Comm_G(P_1)$ and $g\Comm_G(P_2)g^{-1}$ are commensurable subgroups. Then  
\begin{equation}\nonumber
    \begin{split}
   \Comm_G(P_1)  &  = \Comm_G (\Comm_G(P_1))  \\
  &  =\Comm_G(g\Comm_G(P_2)g^{-1}) \\
  & =\Comm_G(\Comm_G(gP_2g^{-1})) \\
  & = \Comm_G(gP_2g^{-1})\\      &= g\Comm_G(P_2)g^{-1}.
    \end{split}
\end{equation}
Since, by definition,  $\calp^*$ does not have two subgroups that are conjugate to each other, it follows that $\Comm_G(P_1) =\Comm_G(P_2)$ and $g\in\Comm_G(P_1)$. Hence $\calp^*$ is reduced. 
\end{proof}

\begin{lemma}\label{lem:GeomMalnormal}
Let $\calp$ be a finite collection of infinite subgroups of a finitely generated group $G$. Then $\calp$ is  almost malnormal  if and only if for any $A,B\in G/\calp$, either $A=B$ or $\caln_n(A)\cap\caln_n(B)$ is a finite subset of $G$ for every $n$.
\end{lemma}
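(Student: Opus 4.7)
The plan is to deduce the equivalence from two ingredients: Lemma~\ref{lem:Perpendicular} (so that neighborhood intersections of cosets sit inside neighborhoods of intersections of conjugates), plus the elementary observation that for any subgroup $K\le G$ in a finitely generated group, $\caln_M(K)=K\cdot B_M$ where $B_M$ is the word ball of radius $M$; in particular $\caln_M(K)$ is finite if and only if $K$ is finite. With these two facts in hand, both directions reduce to straightforward coset bookkeeping.

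For the ``only if'' direction, I would argue by contradiction. Assume $\calp$ is almost malnormal and that there are distinct cosets $A=gP$ and $B=fQ$ in $G/\calp$ with $\caln_n(A)\cap\caln_n(B)$ infinite for some $n$. Applying Lemma~\ref{lem:Perpendicular} produces $M$ with
\[\caln_n(gP)\cap\caln_n(fQ)\subseteq\caln_M\bigl(gPg^{-1}\cap fQf^{-1}\bigr),\]
so the right-hand side is infinite. By the finiteness observation above, $K:=gPg^{-1}\cap fQf^{-1}$ must then be an infinite subgroup. Conjugating by $f^{-1}$ gives $hPh^{-1}\cap Q$ infinite where $h=f^{-1}g$, and almost malnormality of $\calp$ forces $P=Q$ and $h\in P$. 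Then $g\in fP=fQ$, so $A=gP=fQ=B$, contradicting distinctness.

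For the ``if'' direction, I would take $P,P'\in\calp$ and $g\in G$ with $P'\cap gPg^{-1}$ infinite, and show $P=P'$ and $g\in P$. The key input is the cheap bound $d(gpg^{-1},gp)\le|g|$ for every $p\in P$, giving $gPg^{-1}\subseteq\caln_{|g|}(gP)$, so
\[P'\cap gPg^{-1}\subseteq \caln_{|g|}(P')\cap\caln_{|g|}(gP).\]
Applying the geometric hypothesis with $A=P'$ and $B=gP$, the right-hand side's infiniteness forces $A=B$, i.e.\ $P'=gP$. Since $1\in P'$ we conclude $g\in P$, and then $gP=P$, so also $P=P'$, which is the required almost malnormality conclusion.

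I do not expect a real obstacle: the content is packaged in Lemma~\ref{lem:Perpendicular}, and the finite/infinite subgroup hypothesis only ensures that the statement is not vacuous on cosets of finite subgroups (where both sides of the equivalence are automatic). The only point requiring a little care is translating a statement about $gPg^{-1}\cap fQf^{-1}$ (used in Lemma~\ref{lem:Perpendicular}) into the form $hPh^{-1}\cap Q$ appearing in Definition~\ref{def:malnormal}, which is achieved by conjugating by $f^{-1}$ as above.
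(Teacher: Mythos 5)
Your proof is correct and follows essentially the same route as the paper: both directions hinge on Lemma~\ref{lem:Perpendicular} together with the observation that a bounded neighbourhood of a subgroup is finite iff the subgroup is, and your ``if'' direction uses the same containment $gPg^{-1}\subseteq\caln_{|g|}(gP)$ that the paper leaves implicit. The extra details you supply (the conjugation by $f^{-1}$ and the bound $d(gp,gpg^{-1})\le|g|$) are exactly the steps the paper's terser proof glosses over, so there is nothing to change.
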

\begin{proof}
Suppose that $\calp$ is an almost malnormal collection of infinite subgroups. Let $g_1P_1, g_2P_2\in G/\calp$   and suppose that $\caln_n(g_1P_1)\cap\caln_n(g_2P_2)$ is an infinite (and hence unbounded) subset of $G$ for some integer $n$. By Lemma~\ref{lem:Perpendicular}, there is an integer $m$ such that $\caln_n(g_1P_1)\cap\caln_n(g_2P_2) \subset \caln_m(g_1P_1g_1^{-1}\cap g_2P_2g_2^{-1})$. It follows that $g_1P_1g_1^{-1}\cap g_2P_2g_2^{-1}$ is an infinite subgroup and hence $P_1=P_2$ and $g_1^{-1}g_2\in P_1$ by almost malnormality. Therefore $g_1P_1=g_2P_2$.  

Conversely, suppose that for any $A,B\in G/\calp$, either $A=B$ or $\caln_n(A)\cap\caln_n(B)$ is a finite set for every $n$. Let   $P, P'\in \calp$ and $g\in G$ and suppose that $gPg^{-1} \cap P'$ is an infinite subgroup.
It follows that there is $n>0$ such that $\caln_n(gP) \cap \caln_n (P')$ is an infinite subset of $G$. Hence $gP=P'$ and in particular $P=P'$ and $g\in P$.
\end{proof}

\begin{lemma}\label{lem:FiniteIndex}
Let $q\colon (G, \mathcal{P})\to (H, \mathcal{Q})$ be a quasi-isometry of pairs. Suppose that $\calp$ and $\calq$ are finite collections, and $\calq$ is reduced.  If $Q$ is finite index in $\Comm_H(Q)$ for every $Q\in \calq$, then $P$ is finite index in $\Comm_G(P)$ for every $P\in\calp$.
\end{lemma}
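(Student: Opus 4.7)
The plan is to proceed by contradiction and exploit local finiteness of the Hausdorff metric on $G/\calp$. Suppose some $P\in\calp$ has infinite index in $\Comm_G(P)$; I will produce infinitely many distinct cosets in $G/\calp$ all contained in a single bounded Hausdorff-ball, contradicting local finiteness. Reducedness of $\calq$ (hence the fact that $\dot q\colon G/\calp\to H/\calq$ is a function, by \Cref{prop:DotqIsFunction}) is what collapses all the relevant cosets of $\calp$ into a single image in $H/\calq$, and it is this collapse that ultimately generates the contradiction.

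Concretely, I first pick an infinite family $\{g_i\}_{i\in\mathbb{N}}$ of distinct left coset representatives of $P$ in $\Comm_G(P)$, with $g_1=1$. Since each $g_i^{-1}g_j$ lies in $\Comm_G(P)$, the subgroup $P$ is commensurable with $(g_i^{-1}g_j)P(g_i^{-1}g_j)^{-1}$, so \Cref{lem:CommensurableSubgps} gives $\Hdist_G(P, g_i^{-1}g_jP)<\infty$, and left multiplication by $g_i$ (an isometry of $(G,\dist_G)$) yields $\Hdist_G(g_iP, g_jP)<\infty$ for all $i,j$. Note that these distances are \emph{a priori} not uniform in $i$ and $j$.

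The crucial step is showing that the images $\dot q(g_iP)$ all coincide. From the quasi-isometry bound for $q$ together with the defining $M$-bound for $\dot q$, the cosets $\dot q(g_iP)$ and $\dot q(g_jP)$ of $H/\calq$ are at finite Hausdorff distance in $H$; by \Cref{lem:CommensurableSubgps} the corresponding subgroups of $\calq$ are conjugate-commensurable, and reducedness of $\calq$ forces $\dot q(g_iP)=\dot q(g_jP)$. Call this common value $hQ$. Now $\Hdist_H(q(g_iP),hQ)<M$ for every $i$, so the triangle inequality gives $\Hdist_H(q(g_iP),q(g_jP))<2M$ uniformly in $i,j$, and the lower quasi-isometry bound for $q$ converts this into a uniform bound $\Hdist_G(g_iP,g_jP)\leq R$ for a constant $R$ independent of $i,j$.

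The conclusion is then immediate: the remark following \Cref{def:qipairs} records that $(G/\calp,\Hdist_G)$ is locally finite since $\calp$ is finite, so the ball of radius $R$ around $g_1P$ contains only finitely many elements of $G/\calp$, yet the $\{g_iP\}_{i\in\mathbb{N}}$ form an infinite pairwise distinct family in that ball. The main obstacle is the crucial step above: the uniform Hausdorff bound appears \emph{only} after invoking reducedness of $\calq$ to identify all $\dot q(g_iP)$; without that identification the distances $\Hdist_G(g_iP,g_jP)$ would a priori be unbounded and the local-finiteness argument would collapse.
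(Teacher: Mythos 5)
Your argument is correct, but it reaches the contradiction by a different mechanism than the paper. Both proofs start the same way: assume some $P\in\calp$ has infinite index in $\Comm_G(P)$, use \Cref{lem:CommensurableSubgps} to produce infinitely many distinct cosets $g_iP$ at pairwise finite Hausdorff distance, and use reducedness of $\calq$ (via \Cref{prop:DotqIsFunction}) to make $\dot q$ a function. The paper then runs the argument forward in $H$: it notes that local finiteness of $(G/\calp,\Hdist)$ forces $\{g_iP\}$ to be unbounded, pushes this unboundedness through the quasi-isometry $\dot q$, applies the pigeonhole principle to the finite collection $\calq$ to land in cosets of a single $Q$, and concludes via \Cref{lem:CommensurableSubgps} that $Q$ has infinite index in $\Comm_H(Q)$, contradicting the stated hypothesis on $\calq$. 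You instead exploit reducedness more aggressively: finite Hausdorff distance between elements of $H/\calq$ plus reducedness collapses all the images $\dot q(g_iP)$ to a single coset $hQ$, which yields the uniform bound $\Hdist_H(q(g_iP),q(g_jP))<2M$ and hence, pulling back through the lower quasi-isometry bound, a uniform bound on $\Hdist_G(g_iP,g_jP)$; this contradicts local finiteness of $(G/\calp,\Hdist)$ directly on the $G$-side. What your route buys is economy of hypotheses: you never use finiteness of $\calq$ (no pigeonhole), local finiteness of $(H/\calq,\Hdist)$, nor the hypothesis that each $Q\in\calq$ has finite index in $\Comm_H(Q)$ --- which is harmless, since reducedness of $\calq$ already gives $Q=\Comm_H(Q)$, making that hypothesis redundant; the paper's version keeps the argument symmetric between the two sides and makes the transfer of the ``infinite index in the commensurator'' property explicit, which is closer in spirit to how \Cref{lem:FiniteIndex} is used later. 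Both are complete; just make sure, as you implicitly do, that the $g_iP$ are pairwise distinct because the $g_i$ are distinct coset representatives of $P$ in $\Comm_G(P)$.
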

\begin{proof}
Since $\calq$ is reduced, $\dot{q}$ is a function from $G/\calp \to G/\calq$. Since both $\calp$ and $\calq$ are finite collections, it follows that $\dot{q}\colon (G/\calp,\Hdist) \to (H/\calq,\Hdist)$ is a quasi-isometry between locally finite metric spaces. 
Suppose that $P\in \calp$ has infinite index in $\Comm_G(P)$.  Lemma~\ref{lem:CommensurableSubgps} implies that there is an infinite collection of left cosets $\cala=\{g_iP \colon i\in I\}$ such that $\Hdist(g_iP,g_jP)<\infty$ for any $i,j\in I$. By local finiteness of $(G/\calp, \Hdist)$, the collection $\cala$ is an unbounded subset of $G/\calp$. It follows that $\calb=\{\dot{q}(g_iP)\colon i\in I\}$ is an unbounded subset of $H/\calq$. Since $\calq$ is a  finite collection, and $\dot{q}(g_iP)=h_iQ_i$ for some $h_i\in H$ and $Q_i\in\calq$, the pigeon hole principle implies that we can assume that all $Q_i$'s are a fixed $Q\in \calq$. By Lemma~\ref{lem:CommensurableSubgps}, the subgroup $Q$ has infinite index in $\Comm_H(Q)$. 
\end{proof}

\begin{proof}[Proof of  Proposition~\ref{prop:MalnormalityQIinvariance}]
Suppose that $q\colon (G,\calp)\to (G,\calq)$ is a quasi-isometry of pairs. 
Since $\calq$ is an almost malnormal collection of infinite subgroups, it is a reduced collection and every element of $\calq$ has finite index in its commensurator. Since $\calp$ and $\calq$ are finite collections,    Lemma~\ref{lem:FiniteIndex} implies that every element of $\calp$ has finite index in its commensurator.
Let $\calp^*$ be a refinement of $\calp$ in $G$.
By Proposition~\ref{lem:QI-Refinement} there is a quasi-isometry of pairs  $p\colon (G,\calp^*) \to (G,\calp)$. Then the composition $r=p\circ q$ is an $(L,C,M)$-quasi-isometry of pairs $(G,\calp^*) \to (H,\calq)$.
Lemma~\ref{lem:RefinementImpliesReduce} implies that $\calp^*$ is a reduced collection.
Therefore $\dot{r}$ is a bijection $G/\calp^* \to H/\calq$ by Proposition~\ref{prop:DotqIsFunction}.  To conclude that $\calp^*$ is an almost malnormal we verify the hypothesis of Lemma~\ref{lem:GeomMalnormal}.

\textbf{Claim:} \emph{$\calp^*$ is a finite collection of infinite subgroups.}

Since $\calp$ is finite, then $\calp^*$ is finite. Every element of $\calp^*$ is a conjugate of a subgroup of the form $\Comm_G(P)$ for some $P\in \calp$, hence it is enough to show that $\calp$ contains only infinite subgroups. Observe that any $P\in \calp$ is an infinite subgroup since $\Hdist(\dot{q}(P), Q)<\infty$ for some $Q\in H/\calq$ and every subgroup in $\calq$ is infinite. $\blackdiamond$

\textbf{Claim:} \emph{For any $A,B\in G/\calp^*$, either $A=B$ or $\caln_n(A)\cap\caln_n(B)$ is a finite subset of $G$ for every $n$.}

Let $A,B\in G/\calp^*$ and suppose that $A\neq B$. Since $\dot{r}\colon G/\calp^*\to H/\calq$ is a bijection, it follows that $\dot{r}(A)$ and $\dot{r}(B))$  are distinct elements of $H/\calq$. Since $\calq$ is an almost malnormal collection, Lemma~\ref{lem:GeomMalnormal} implies that for any  integer $m$ the intersection 
$\caln_m(\dot{r}(A))\cap\caln_m(\dot{r}(B))$ is a finite (and hence bounded) subset of $H$.  Since $r\colon G\to H$ is a quasi-isometry, it follows that for every $n$, the intersection $\caln_n(A)\cap\caln_n(B)$ is a bounded (and hence finite) subset of $G$. $\blackdiamond$
\end{proof}

\begin{thm}[\Cref{thmx:malnormal}]\label{thm:malnormal}
 Let $q\colon (G, \mathcal{P})\to (H, \mathcal{Q})$ be a quasi-isometry of pairs.  If $\calq$ is an almost malnormal finite  collection of infinite  subgroups and $\calp$ is a finite collection, then any refinement $\calp^*$ of $\calp$ is almost malnormal and $q\colon (G,\calp^*) \to (H,\calq)$ is a quasi-isometry of pairs.
\end{thm}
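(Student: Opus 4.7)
The plan is to obtain the theorem as an essentially immediate consequence of \Cref{prop:MalnormalityQIinvariance} together with \Cref{lem:QI-Refinement}. Note that the first assertion, that any refinement $\calp^*$ of $\calp$ is almost malnormal, is precisely the content of \Cref{prop:MalnormalityQIinvariance}, so the only substantive thing to verify is that the map $q$ itself, viewed with different peripheral structure on the source, remains a quasi-isometry of pairs $(G,\calp^*) \to (H,\calq)$.

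For that verification, I would first invoke \Cref{lem:FiniteIndex}: since $\calq$ is almost malnormal with infinite members it is reduced and each $Q \in \calq$ is finite index in $\Comm_H(Q)$, so the lemma yields that every $P \in \calp$ is finite index in $\Comm_G(P)$. This is exactly the hypothesis needed to apply \Cref{lem:QI-Refinement}, which then produces a quasi-isometry of pairs $p\colon (G,\calp^*) \to (G,\calp)$ realised by the identity map on $G$.

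The second step is to compose. Composition of quasi-isometries of pairs is a quasi-isometry of pairs: the underlying map is a composition of quasi-isometries of $G$ and $H$, and if $p$ is an $(1,0,M_1)$-quasi-isometry of pairs and $q$ an $(L,C,M_2)$-quasi-isometry of pairs, then for any $A^* \in G/\calp^*$ one picks $A \in G/\calp$ with $\Hdist_G(A, A^*) < M_1$ and then $B \in H/\calq$ with $\Hdist_H(q(A), B) < M_2$, giving $\Hdist_H(q(A^*), B) \leq L M_1 + C + M_2$; surjectivity of the projection onto $H/\calq$ follows by reversing the roles, which is possible because both constituent relations project surjectively on both sides. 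Since $p$ is the identity on $G$, the composite underlying map is literally $q$, so $q\colon (G,\calp^*) \to (H,\calq)$ is the desired quasi-isometry of pairs.

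I do not expect any serious obstacle here, because all the technical work has already been absorbed into the preparatory lemmas. The only place one has to be slightly careful is in writing out that composition of quasi-isometries of pairs is a quasi-isometry of pairs, which is straightforward from the definitions and could also be cited directly in the literature on quasi-isometries of pairs.
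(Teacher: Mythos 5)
Your proposal is correct and follows essentially the same route as the paper: the paper derives \Cref{thm:malnormal} from \Cref{prop:MalnormalityQIinvariance} together with \Cref{lem:QI-Refinement}, and inside the proof of \Cref{prop:MalnormalityQIinvariance} it likewise uses \Cref{lem:FiniteIndex} to verify the hypothesis of \Cref{lem:QI-Refinement} and then composes the identity quasi-isometry of pairs $(G,\calp^*)\to(G,\calp)$ with $q$. Your explicit write-up of why the composition is again a quasi-isometry of pairs just makes precise a step the paper leaves implicit.
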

\begin{proof}
The result follows from \Cref{lem:QI-Refinement} and \Cref{prop:MalnormalityQIinvariance}.
\end{proof}

\section{Examples and non-examples}
In this section we show that there are examples of pairs $(G,H)$ with well-defined relative Dehn function outside of the context of relatively hyperbolic groups. Hyperbolically embedded subgroups were introduced in~\cite{DGOsin2017} by Dahmani, Guirardel and Osin.  Given a group $G$, $X\subset G$ and $H\leq G$, let $H\hookrightarrow_h (G,X)$ denote that $H$ is a hyperbolically embedded subgroup of $G$ with respect to $X$. There is a characterisation in~\cite{MPR2021} of $H$ being hyperbolically embedded into $G$
that fits into the context of our \Cref{thmx:ConefOff}, namely, in terms of fine vertices in coned-off Cayley graphs (see \Cref{def:fine2}).

\begin{proposition}  \emph{\cite[Proposition 1.4]{MPR2021}}  \label{prop:q2}
Let $G$ be a group, $X\subset G$ and $H\leq G$. Then $H \hookrightarrow_h (G,X) $ if and only if   $\hat{\Gamma}(G,H,X)$ is connected, hyperbolic, and fine at cone vertices. 
\end{proposition}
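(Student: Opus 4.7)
The plan is to compare $\hat\Gamma(G, H, X)$ with the Cayley graph $\Gamma(G, X\sqcup H)$ appearing in Osin's original definition of hyperbolic embedding. Recall that $H \hookrightarrow_h (G, X)$ means: (i) $G = \langle X \cup H\rangle$; (ii) $\Gamma(G, X \sqcup H)$ is Gromov hyperbolic; and (iii) the relative metric $\hat d$ on $H$ --- the length of the shortest path in $\Gamma(G, X\sqcup H)$ between two elements of $H$ that avoids every $H$-labelled edge --- is locally finite. The strategy is to match each of these three conditions with one of the three properties listed on $\hat\Gamma(G, H, X)$.

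Connectedness of $\hat\Gamma(G, H, X)$ is equivalent to $G = \langle X\cup H\rangle$, directly from the edge structure: words in $X\cup H$ representing $g\in G$ are in bijection with paths in $\hat\Gamma$ from $1$ to $g$, reading each $X$-letter as an $X$-edge and each $H$-letter as a length-$2$ detour through the unique cone vertex containing both endpoints. For the hyperbolicity clause, I would build a $G$-equivariant $(2,0)$-quasi-isometry $\varphi\colon \Gamma(G, X\sqcup H) \to \hat\Gamma(G, H, X)$ exactly analogous to \Cref{def:varphi}, namely the identity on $G$-vertices, subdividing each $H$-labelled edge $\{g, gh\}$ by the length-$2$ path $g \to gH \to gh$ through the cone vertex $gH$. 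Since Gromov hyperbolicity is a quasi-isometry invariant, hyperbolicity of $\Gamma(G, X\sqcup H)$ is equivalent to hyperbolicity of $\hat\Gamma(G, H, X)$.

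The heart of the argument is matching fineness at cone vertices with local finiteness of $\hat d$. By $G$-equivariance (the $G$-action is transitive on cone vertices), $\hat\Gamma(G, H, X)$ is fine at every cone vertex if and only if it is fine at $H$, which by Bowditch's characterisation recalled in \Cref{def:fine2} amounts to $(T_H\hat\Gamma, \angle_H) = (H, \angle_H)$ being locally finite. One half of the equivalence, $\angle_H \leq \hat d$, is immediate from $\varphi$: an $\hat d$-path uses only $X$-edges and hence maps to a path in $\hat\Gamma$ avoiding every cone vertex, in particular $H$; therefore local finiteness of $\angle_H$ forces local finiteness of $\hat d$. For the converse, given a path $\gamma$ in $\hat\Gamma$ of length $\leq n$ from $h_1$ to $h_2$ that avoids the cone vertex $H$, I would collapse each length-$2$ detour of $\gamma$ through a cone vertex $g_iH$ (with $g_i\notin H$) into a single $H$-labelled edge to obtain a path of length $\leq n$ in $\Gamma(G, X\sqcup H)$ using $X$-edges and $H$-edges based only at cosets $g_iH \neq H$. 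Combining the hyperbolicity of $\hat\Gamma$ just established with the fact that each offending $H$-edge sits in a $G$-translate of a bounded $\hat d$-neighbourhood (via $G$-conjugacy of the relative metric), one bounds the number of possible endpoints $h_2$ and deduces local finiteness of $\angle_H$.

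The main obstacle is this converse direction in the fineness step. Angle-metric paths are permitted to detour through cone vertices $gH \neq H$, and under $\varphi^{-1}$ these detours become $H$-labelled edges not based at elements of $H$ --- precisely the edges forbidden in the definition of $\hat d$. The hyperbolicity of $\hat\Gamma$ is the crucial extra ingredient: it rigidifies the combinatorial type of an $\angle_H$-geodesic so that the sequence of cone vertices and $X$-edges along it has only finitely many possibilities once $n$ is fixed, which is what allows $\hat d$-local-finiteness to pull back to $\angle_H$-local-finiteness.
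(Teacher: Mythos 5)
The paper does not actually prove this proposition---it is quoted verbatim from \cite{MPR2021}---so there is no internal argument to compare against; I will assess your proof on its own terms. Your overall strategy, transporting the three clauses of the Dahmani--Guirardel--Osin definition across the $G$-equivariant subdivision quasi-isometry $\Gamma(G,X\sqcup H)\to\hat\Gamma(G,H,X)$, is the standard and correct one, and your treatment of connectedness and hyperbolicity is fine. The gap is in the fineness clause, and it originates in your statement of the relative metric. In the definition of $H\hookrightarrow_h(G,X)$, $\hat d(h_1,h_2)$ is the length of a shortest path in $\Gamma(G,X\sqcup H)$ from $h_1$ to $h_2$ avoiding only the edges of the complete subgraph $\Gamma(H,H)$, i.e.\ avoiding $H$-labelled edges \emph{both of whose endpoints lie in $H$}; $H$-labelled edges $\{g,gh\}$ with $g\notin H$ are admissible. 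With that definition the ``obstacle'' you describe disappears: collapsing each detour of an $\angle_H$-path through a cone vertex $gH\neq H$ produces precisely an admissible $H$-edge, so $\hat d\leq \angle_H$, while applying $\varphi$ to an admissible path yields a path in $\hat\Gamma(G,H,X)$ missing the cone vertex $H$ (every $H$-edge it uses is based outside $H$, so its detour passes through some $gH\neq H$), giving $\angle_H\leq 2\hat d$. The two metrics are bi-Lipschitz equivalent and local finiteness transfers in both directions with no further input.

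With your over-restrictive version of $\hat d$, which forbids \emph{all} $H$-labelled edges, both halves break. The quantity you control is some $\hat d'\geq\hat d$, and local finiteness of $\hat d'$ does not imply local finiteness of $\hat d$, so even your ``immediate'' direction does not establish that $H$ is hyperbolically embedded; and the converse direction genuinely fails, which is what forces you to reach for hyperbolicity. That appeal cannot be repaired as stated: hyperbolicity of a graph places no bound on the number of combinatorial types of paths of length at most $n$ between two fixed vertices---the cone over an infinite discrete set is $0$-hyperbolic yet not fine at the cone point---so it does not ``rigidify'' angle-geodesics in the way you need. Correcting the definition of $\hat d$ and deleting the appeal to hyperbolicity from the fineness step turns your outline into a complete proof.
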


The following theorem provides our examples.

\begin{thm}[\Cref{thmx:last}]\label{thm:last}
Let $G$ be a finitely presented group and $H\leq G$ be a subgroup. If $H\hookrightarrow_h G$ then the relative Dehn function $\Delta_{G,H}$ is well-defined. 
\end{thm}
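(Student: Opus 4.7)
The plan is to apply the converse direction of \Cref{thm:ConefOff}: showing that $\hat\Gamma(G,\{H\})$ is both fine and fillable will yield that $(G,H)$ is finitely presented relative to $H$ and that $\Delta_{G,H}$ is well-defined.

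First, since $G$ is finitely presented (hence finitely generated) and $H\hookrightarrow_h G$, one may choose a finite subset $S\subseteq G$ containing a generating set of $G$ and such that $H\hookrightarrow_h (G,S)$; this is the standard reduction for finitely generated ambient groups. Then by \Cref{prop:q2}, the coned-off Cayley graph $\hat\Gamma := \hat\Gamma(G,H,S)$ is connected, Gromov hyperbolic, and fine at cone vertices.

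To upgrade fineness at cone vertices to fineness of $\hat\Gamma$ as a whole, I would note that the non-cone vertices (the elements of $G$) form a single $G$-orbit, so it suffices to verify fineness at the identity $1\in G$ in the angle-metric formulation (\Cref{def:fine2}). The neighbours of $1$ in $\hat\Gamma$ are precisely the elements of $S$ together with the single cone vertex corresponding to the coset $H$ containing $1$. Since $S$ is finite, this neighbour set $T_1\hat\Gamma$ is finite, so the angle metric on it is vacuously locally finite. Hence $\hat\Gamma$ is fine at every group-element vertex, and combined with fineness at cone vertices, $\hat\Gamma$ is fine.

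For fillability I would invoke the standard fact that any Gromov hyperbolic connected graph satisfies a linear coarse isoperimetric inequality for sufficiently large filling size (see for instance \cite[Chapter~III.H]{BridsonHaefligerBook}); in particular $\hat\Gamma$ is fillable. Applying \Cref{thm:ConefOff} to the relatively finitely generated pair $(G,\{H\})$ now yields that $(G,H)$ is relatively finitely presented and $\Delta_{G,H}$ is well-defined. The argument is really an assembly of the tools developed earlier in the paper; the only subtle point is ensuring the finite witness $S$ for the hyperbolic embedding, after which hyperbolicity provides fillability and the characterisation of hyperbolic embeddings via fineness at cone vertices provides global fineness.
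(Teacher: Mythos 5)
Your reduction to a finite witness is where the argument breaks. The claim that one may choose a \emph{finite} subset $S\subseteq G$ with $H\hookrightarrow_h (G,S)$ is false in general: for a finitely generated group $G$, the existence of a finite relative generating set witnessing the hyperbolic embedding is equivalent to $G$ being hyperbolic relative to $H$. Since the point of this theorem is precisely to produce pairs with well-defined relative Dehn function \emph{beyond} relative hyperbolicity, your reduction would trivialise the statement in the interesting cases. The fact actually available, \cite[Corollary 4.27]{DGOsin2017}, only allows one to change the relative generating set $X$ by finitely many elements; so one may assume $X\supseteq S$ for a finite generating set $S$ of $G$, but $X$ remains infinite in general. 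This also undermines your fillability step: \Cref{prop:q2} gives hyperbolicity and fineness at cone vertices only for $\hat\Gamma(G,H,X)$, whereas the graph to which \Cref{thm:ConefOff} must be applied is $\hat\Gamma(G,H,S)$ with $S$ finite, which need not be hyperbolic; and $\hat\Gamma(G,H,X)$ is not cocompact when $X$ is infinite, so the converse direction of \Cref{thm:ConefOff} (via \Cref{prop:fillable03}) cannot be applied to it either.

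The fineness half of your argument does survive in corrected form, and this is essentially what the paper does: since $X\supseteq S$, the graph $\hat\Gamma(G,H,S)$ is a subgraph of $\hat\Gamma(G,H,X)$ and so inherits fineness at cone vertices from \Cref{prop:q2}, while at group-element vertices it is locally finite because $S$ is finite, hence fine there; thus $\hat\Gamma(G,H,S)$ is fine. But relative finite presentability should not be routed through fillability. Instead, note that $H$ is finitely generated by \cite[Corollary~4.32]{DGOsin2017}, so \Cref{lem:finpresiff} gives that the finitely presented group $G$ is finitely presented relative to $H$; then \Cref{thmx:ConefOff}\eqref{thmx:ConefOff:2} (well-definedness of $\Delta_{G,H}$ is equivalent to fineness of the coned-off Cayley graph, given relative finite presentability) concludes using only the fineness you have established. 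One should also dispose of the case of finite $H$ separately, as the paper does, where the relative Dehn function is just the ordinary Dehn function of the finitely presented group $G$.
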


The proof of the theorem is discussed after the following lemma. 

\begin{lemma}\label{lem:finpresiff}
Let $G$ be a finitely generated group and $H$ a finitely presented subgroup. Then $G$ is finitely presented if and only if $G$ is finitely presented relative to $H$. 
\end{lemma}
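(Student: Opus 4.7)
The plan is to prove each implication by converting between an ordinary presentation of $G$ and a relative presentation of $G$ with respect to $H$, using a finite generating set $T = \{t_1, \ldots, t_n\}$ of $H$ as the bridge between the factorizations $F(S_0)$ and $F(S) \ast H$.

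For the ``only if'' direction, I will start from a finite presentation $G = \langle S_0 \mid R_0 \rangle$ and, for each $t_i \in T$, fix a word $w_i \in F(S_0)$ representing $t_i$ in $G$. The plan is then to verify that
\[\langle S_0,\, H \mid R_0 \cup \{t_i w_i^{-1} : 1 \le i \le n\}\rangle\]
is a finite relative presentation of $G$. The added relators identify each generator $t_i$ of $H$ with a word in the $F(S_0)$-factor, so modulo the normal closure of these relators in $F(S_0) \ast H$ the image of the $H$-factor lies inside the image of $F(S_0)$, and the relations $R_0$ then cut the quotient down to exactly $G$. This is a routine diagram chase using the universal property of $F(S_0) \ast H$.

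For the ``if'' direction, starting from a finite relative presentation $\langle S, H \mid R \rangle$, I would rewrite each relator $r \in R$ by replacing every occurrence of $h \in H$ by a fixed word in $T$ representing $h$, obtaining a finite set $R' \subset F(S \cup T)$. If $R_H \subset F(T)$ normally generates $\ker(F(T) \twoheadrightarrow H)$, then a short check shows that $\ker(F(S \cup T) \twoheadrightarrow G) = \nclose{R' \cup R_H}$ in $F(S \cup T)$, giving the presentation $G = \langle S \cup T \mid R' \cup R_H\rangle$. The main obstacle is that $R_H$ can be chosen finite only when $H$ is itself finitely presented, so this direction really needs finite presentability of $H$ rather than mere finite generation; the case $G = H$ with $G$ finitely generated but not finitely presented (take $S = R = \emptyset$) shows the literal statement of the ``if'' direction can fail. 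In the application to \Cref{thm:last}, however, only the forward direction of the lemma is invoked, and it goes through under the stated hypothesis that $H$ is merely finitely generated.
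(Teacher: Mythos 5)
Your forward direction is exactly the paper's argument: write each generator $t_i$ of $H$ as a word $w_i\in F(S_0)$ and check that $\langle S_0, H \mid R_0\cup\{t_iw_i^{-1}\}\rangle$ is a finite relative presentation; this is also the only direction of \Cref{lem:finpresiff} that the paper actually uses, since in the proof of \Cref{thm:last} the group $G$ is finitely presented by hypothesis and one only needs to conclude relative finite presentability.

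Your objection to the reverse direction is well founded, and it pinpoints the weak spot of the paper's own sketch. The paper reduces the lemma to the ``observation'' that $G$ is finitely presented relative to $H$ if and only if it admits a finite relative presentation of the special form $\langle S\sqcup H \mid h_i=w_i,\ R,\ R_H\rangle$ with $R$ a finite set of words over $S$; the nontrivial implication of that observation is precisely what your example refutes. Indeed, taking $S=\emptyset$ and $R=\emptyset$ shows every group is finitely presented relative to itself, so $G=H=\ZZ_2\wr\ZZ$ (finitely generated, not finitely presented) is relatively finitely presented with respect to $H$ while $G$ is not finitely presented; one can also arrange proper subgroups, e.g.\ $G=K\times\ZZ$ relative to $K$ with $K$ finitely generated but not finitely presented. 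Your substitution argument (rewriting the relators over $S\cup T$ and adjoining a set $R_H$ of defining relators of $H$, so that $G=\langle S\cup T\mid R'\cup R_H\rangle$) is the correct proof of the converse under the stronger hypothesis that $H$ is finitely presented, and that is how the converse should be stated; as you note, since \Cref{thm:last} invokes only the forward implication, the results downstream are unaffected.
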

\begin{proof}
Suppose that $G$ has a finite presentation $\langle A\ |\ R \rangle$.  
Let $R_H$ be the collection of all relations in $H$ over the generating set $H-\{1\}$, that is, $H=\langle H-\{1\}\ |\ R_H\rangle$. Let $\{h_1,\ldots,h_k\}$ be a finite generating set of $H$. Then, there is a word $w_i$ over the alphabet $A$ that represents $h_i$. Observe that \[\langle A \sqcup (H-\{1\})\ |\  R, R_H, h_1=w_1,\ldots , h_k=w_k \rangle\]
yields a finite relative presentation of $G$ with respect to $H$.

Conversely, suppose that $\langle A, H\ | \  R  \rangle$ is a finite relative presentation of $G$ with respect to $H$, and let $\langle B \ |\ T \rangle$ be a finite presentation of $H$. Then 
$\langle A\sqcup B, H\ |\ R\sqcup T, h_1=w_1,\ldots , h_k=w_k \rangle$ is a finite relative presentation of $G$ with respect to $H$, where  $\{h_1,\ldots , h_k\} \subset H$ is a finite generating set of $H$ and $w_i$ is a word over $B$ that represents the element $h_i$ (after choosing an isomorphism $F(B)/\nclose{T} \to H$). This relative presentation yields a standard presentation
$\langle A\sqcup B\sqcup (H-\{1\})\ |\ R\sqcup T \sqcup R_H \sqcup \{  h_1=w_1,\ldots , h_k=w_k \}  \rangle$
of $G$, where $R_H$ is the collection of all relations in $H$ over the generating set $H-\{1\}$. Since the $\{h_1,\ldots, h_k\}$ generate $H$, using Tietze transformations one obtains that  
$\langle A\sqcup B \ |\ R\sqcup T \sqcup \{  h_1=w_1,\ldots , h_k=w_k \}  \rangle$ is a presentation of $G$ which is finite.
\end{proof}

\begin{proof}[Proof of Theorem~\ref{thm:last}]
First, note that the theorem is trivial in the case that $H$ is a finite subgroup of $G$.  Indeed, any finite subgroup is hyperbolically embedded by definition and a finite relative presentation of a group with respect to a finite subgroup is in fact a finite presentation.  In particular, the relative Dehn function coincides with the Dehn function and the Dehn function of a finitely presented group is always well-defined.

Since $G$ is finitely presented and $H\hookrightarrow_h G$, it follows from \cite[Corollary~4.32]{DGOsin2017} that $H$ is finitely presented. Hence, by \Cref{lem:finpresiff}, $G$ is finitely presented relative to $H$.  

Let $S$ be a finite generating set of $G$. In view of~\Cref{thmx:ConefOff}\eqref{thmx:ConefOff:2}, to conclude that $\Delta_{G,H}$ is well-defined, it is enough to prove that $\hat\Gamma(G,\calp,S)$ is a fine graph. 

Suppose that $H\hookrightarrow_h (G, X)$ for some $X\subset G$. Without loss of generality, assume that $X$ contains the finite generating set $S$, see~\cite[Corollary 4.27]{DGOsin2017}. It follows that $\hat\Gamma(G,H,S)$ is a subgraph of $\hat\Gamma(G,H,X)$. Since $S$ is finite, observe that every vertex of $\hat\Gamma(G,H,S)$ has either  finite degree or is cone-vertex. By \Cref{prop:q2}, the graph $\hat\Gamma(G,H,X)$ is fine at every cone vertex, and hence so is $\hat\Gamma(G,H,S)$. Therefore $\hat\Gamma(G,H,S)$ is a fine graph.
\end{proof}

\begin{example}
In \cite{G19} the author shows that amongst graph products of finite groups various \emph{eccentric subgroups} (see loc. cit. for a definition) are quasi-isometrically rigid in the sense of \cite{MaSa21}.  Let $G$ be a graph product of finite groups that is not virtually cyclic or a direct product of two infinite groups, then $G$ is acylindrically hyperbolic.  Suppose $H$ is an eccentric subgroup, then $H\hookrightarrow_h G$ if and only if $H$ is almost malnormal.  In particular, if $H$ is almost malnormal, then by \Cref{thm:last}, we see that $\Delta_{G,H}$ is well-defined.  Moreover, for any graph product of finite groups $G'$ quasi-isometric to $G$, there exists a subgroup $H'<G'$, such that $\Delta_{G',H'}\asymp\Delta_{G,H}$.
\end{example}

The following example demonstrates that $\Delta_{G,\calp}$ being well-defined is not implied by $\calp$ being a qi-characteristic collection in the sense of \cite{MaSa21}. 

\begin{example}\label{ex:wreath}
Let $F$ be a finite group and let $H$ be a finitely presented one-ended group. Consider the wreath product $G=F\wr H$.
In work of Genevois and Tessera~\cite[Proof of Theorem~7.1]{GT21}, they show that an quasi-isometry of $q\colon G \to G$ is a quasi-isometry of pairs $q\colon (G, H) \to (G,H)$. Moreover, $H$ is an almost malnormal subgroup and in fact is qi-characteristic in the sense of~\cite{MaSa21}, see~\cite[Theorem~1.18]{GT21}. However the coned-off Cayley graph of $G$ with respect to $H$ is not fine, so the group $G$ can not have a well-defined Dehn function by \Cref{thm:ConefOff}.  To prove this, suppose $F$ is the group with two elements and let $H$ be a group with an element of infinite order $a$. Consider the 
wreath product $G=F\wr H$. If $F$ has non-trivial element $x$, then $G$ has a relative presentation 
\[\langle x, H\ | \ x^2,\  [x,gxg^{-1}] \text{ for all $G-\{e\}$}  \rangle .\]
Let us observe that the coned-off Cayley graph $\hat\Gamma(G,H,\{x\})$ is not fine. Consider the edge $\{e,H\}$. We will show that there infinitely many circuits of length twelve that contain this edge, each of them induced by a word
\[ w_n= xa^nxa^{-n}xa^nxa^{-n}\]
which represents the identity.
For an arbitrary integer $n>0$, the sequence of vertices
\[ \gamma_n = [ H, e, x, xH, xa^n, xa^nx, xa^{n}xH, xa^{n}xa^{-n},  a^nxa^{-n}, a^nxH, a^nx, a^n, H] \]
is a closed path of length twelve in $\hat\Gamma$ containing the edge $\{e,H\}$; the only non-trivial adjacency follows from  $xa^{n}xa^{-n}=a^nxa^{-n}$. It follows that $\gamma_n$ is a circuit since one can show that  the left cosets $H, xH, xa^nxH, a^nxH$ are all distinct. On the other hand, $a^nxH = a^mx H$ if and only if $n=m$, and therefore $\gamma_n\neq \gamma_m$ if $m\neq n$.  Note, we do not know the existence of a finite relative presentation for $G$ with respect to $H$, but observe that we do not use this in the remark.
\end{example}

Finally, we will show the relative Dehn function of $\mathrm{BS}(k,l)$ with respect to the stable letter is not well-defined if either $k$ or $l$ divides the other one.

\begin{example}\label{prop.BS.nwd}
Let $G=\mathrm{BS}(k,l)=\langle a,t\ | ta^kt^{-1}=a^l\rangle$.  We claim that if $k\mid l$ or $l\mid k$, then $\Delta_{G,\langle t\rangle}$ is not well-defined.  As in the previous example we will show that the coned-off Cayley graph $\hat\Gamma(G,\langle t\rangle,\{a,t\})$ is not fine and apply \Cref{thm:ConefOff}.

Without loss of generality let $\ell=km$ and consider $w_n=t^na^kt^{-n}at^na^{-k}t^{-n}a^{-1}$.  Observe that $w_n=1_G$ since $t^na^kt^{-n}=a^{k\ell^n}$ and $t^na^{-k}t^{-n}=a^{-k\ell^n}$.  The word $w_n$ describes a circuit of length $2k+6$ in $\hat\Gamma(G,\langle t\rangle,\{a,t\})$ because the four left cosets $\langle t\rangle$, $t^na^k\langle t\rangle =a^{k\ell^n}\langle t\rangle$, $t^na^kt^{-n}a\langle t\rangle=a^{k\ell^n+1}\langle t\rangle$, and $t^na^kt^{-n}a^{-k}\langle t\rangle =a \langle t\rangle$ are all distinct.  In particular, the coned-off Cayley graph $\hat\Gamma(G,\langle t\rangle,\{a,t\})$ is not fine.
\end{example}

\appendix
\section{Relative Dehn functions of Baumslag-Solitar groups}
\smallskip
\begin{center}by \textsc{Ashot Minasyan}\end{center}
\medskip
For two non-zero integers  $k,l$ we define the Baumslag-Solitar group $BS(k,l)$ by the presentation
\begin{equation*}
BS(k,l)=\langle a,t \mid t a^k t^{-1}=a^l\rangle.
\end{equation*}

Evidently $BS(k,l)$ is finitely presented relative to its cyclic subgroup $\langle t \rangle$ and we can consider the relative presentation
\begin{equation}\label{eq:BS-rel_pres}
BS(k,l)=\langle a,\langle t \rangle \mid \mathcal{R}\rangle,
\end{equation}
where $\mathcal{R}$ consists of all cyclic permutations of the relator $t a^k t^{-1}a^{-l}$ and its inverse.

Let $F=F(a,t)$ be the free group freely generated by $\{a,t\}$. The generating set $\{a\} \cup \langle t \rangle$ of $F$ gives rise to the
relative word length $\|\cdot\|_{\{a\} \cup \langle t \rangle}$ for words over the alphabet $\{a\}^{\pm 1} \cup \langle t \rangle$.

The goal of this appendix is to provide a characterisation for the Dehn function of $BS(k,l)$ with respect to $\langle t \rangle$ to be well-defined (we shall use the definitions of the relative area and relative Dehn functions from Remark~\ref{rem:def_of_rel_D_f}).

\begin{thm} \label{thm:BS_rel_DF-character} Let $G=BS(k,l)$, for some non-zero integers $k,l$.
The relative Dehn function $\Delta_{G,\langle t \rangle}$ is well-defined if and only if $k$ does not divide $l$ and $l$ does not divide $k$.
\end{thm}

\begin{remark} \Cref{thm:BS_rel_DF-character} implies that the relative Dehn function of the group $G=BS(2,3)$ with respect to the cyclic subgroup $\langle t \rangle$ is well-defined. However, we note that $\langle t \rangle \not{\hookrightarrow}_h G$ so the converse of \Cref{thmx:last} is false. In fact, $G$ does not contain any proper infinite hyperbolically embedded subgroups: see Theorem~1.2 and Example~7.4 in \cite{Osin16}.
\end{remark}

\begin{proof}[Proof of Theorem~\ref{thm:BS_rel_DF-character}] The necessity has already been proved in Example~\ref{prop.BS.nwd}, using \Cref{thm:ConefOff}. Below we give a different argument, based on the results of Osin \cite{Osin06}.

Throughout the argument we will use the following well-known elementary facts about $G=BS(k,l)$: the elements $a$ and $t$ have infinite order and $\langle a \rangle \cap \langle t \rangle=\{1\}$ in $G$.

Assume, without loss of generality, that $k$ divides $l$, so that $l=km$, for some $m \in \mathbb{Z} \setminus\{0\}$.
Arguing by contradiction, suppose that the Dehn function $\Delta_{G,\langle t \rangle}$ is well-defined. Then, by \cite[Proposition~2.36]{Osin06}, $\langle t \rangle$ is a malnormal subgroup
of $G$ (i.e., $g \langle t \rangle g^{-1} \cap \langle t \rangle=\{1\}$ for any $g \in G \setminus \langle t \rangle$).

If $l=\pm k$ then $t^2 a^k t^{-2}=a^k$, so that $a^{-k} t^2 a^k =t^2 \in a^{-k} \langle t \rangle a^k \cap \langle t \rangle=\{1\}$, contradicting to the fact that $t$ has infinite order in $G$.
Therefore we can further assume that $|k| \neq |l|$, so that $|m|>1$.

For any $s \in \mathbb{N}$ we have $t^s a^k t^{-s}=a^{m^s k}$ in $G$, whence the commutator word \[W_s=[t^s a^k t^{-s},a]=t^s a^k t^{-s} a t^s a^{-k} t^{-s} a^{-1}\] represents the trivial element of $G$. Note that $\|W_s\|_{\{a\} \cup \langle t \rangle}=2k+6$, so, since the Dehn function $\Delta_{G,\langle t \rangle}$ is well-defined, there exists a constant $C \ge 0$ such that
\[\area(W_s) \le C, ~\text{ for all } s \in \N.\]

For each $s \in \N$ let $q_s$ be the cycle in the Cayley graph $\Gamma(G,\{a\} \cup \langle t \rangle \setminus\{1\} )$ based at the identity element and labelled by the word $W_s$.
By the definition of $W_s$, $q_s$ is a concatenation of eight subpaths $p_1,p_2,\dots,p_8$, where $p_1$ is the edge labelled by $t^k$, $p_2$ has length $|k|$ and is labelled by $a^k$, and so on: see Figure~\ref{fig:q_s}.

\begin{figure}[!ht]
  \begin{center}
   \includegraphics{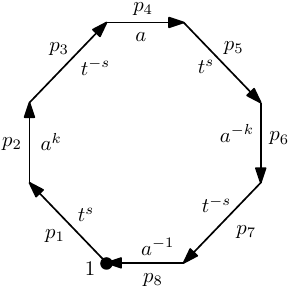}
  \end{center}
  \caption{The cycle $q_s$ (markings inside the polygon represent the labels of the subpaths $p_1,\dots,p_8$)}\label{fig:q_s}
\end{figure}

Using Osin's terminology from \cite[Section 2.2]{Osin06}, we see that $p_1$, $p_3$, $p_5$ and $p_7$ is the list of $\langle t \rangle$-components of $q$. Let us show that $p_1$ is an isolated component of $q_s$. Indeed, if $p_1$ is connected to $p_3$ then the label of $p_2$, $a^k$, must represent an element of $\langle t \rangle$ in $G$. The latter is impossible since $\langle a \rangle \cap \langle t \rangle=\{1\}$ in $G$ and $a^k \neq 1$. Similarly, $p_1$ cannot be connected to $p_7$. Finally, if $p_1$ is connected to $p_5$ then the label of the path $p_1p_2p_3p_4$ must represent an element of $\langle t \rangle$ in $G$. However, this label is equal to $t^s a^k t^{-s}a$, which simplifies to $a^{m^s k+1}$ in $G$. This again yields a contradiction because
$m^s k+1 \neq 0$ (which is true as $|m|>1$ and $s \in \N$).

Therefore we can apply \cite[Lemma 2.27]{Osin06} to the cycle $q_s$, claiming that \[|t^s|_\Omega \le M\area(W_s),\] where $\Omega=\{t,t^{-1}\}$ and $M=\max\{\|R\|_{\{a\} \cup \langle t \rangle} \mid R \in \mathcal{R}\}=|k|+|l|+2$. It follows that $s\le MC$ for all $s \in \N$. This contradiction shows that the Dehn function $\Delta_{G,\langle t \rangle}$ is not well-defined, so the necessity statement of the theorem has been proved.

The proof of the sufficiency occupies the rest of the appendix and will be completed in Theorem~\ref{thm:BS_suff} below.
\end{proof}

\subsection{Notation}\label{subsec:notation}
We will use $\Z$ to denote the set of all integers, $\N=\{1,2,\dots\}$ -- the set of natural numbers and $\N_0=\N \cup \{0\}$.
Given a prime $p$ and an integer $n \in \Z \setminus\{0\}$, we will write \[\nu_p(n)=\max\{s \in \N_0 \mid p^s \text{ divides }n\} \in \N_0.\] Evidently, $\nu_p(n) \le \log_{p}(|n|)$
and $\nu_p(mn) =\nu_p(m)+\nu_p(n)$, for all $m,n \in \Z\setminus\{0\}$.

Further on $k,l$ will be some fixed non-zero integers and $G$ will be the Baumslag-Solitar group $BS(k,l)$, equipped with the relative presentation \eqref{eq:BS-rel_pres}.
For two words $W,W'$ over the alphabet $\{a\}\cup\langle t \rangle$ we will write $W \eqG W'$ if $W$ and $W'$ represent the same element of $G$.

\subsection{Some terminology}\label{subsec:terminology}
By the Normal Form Theorem for free products, we know that any word $W$ over the alphabet $\{a\}^{\pm 1} \cup \langle t \rangle$ is equal in $F=F(a,t)$ to a
unique \emph{freely reduced word}, which has the form
\begin{equation}\label{eq:reduced_W}
a^{u_0} t^{v_1} a^{u_1} \dots t^{v_m} a^{u_m}, \text{ where } m \ge 0,~  u_0,u_m \in \mathbb{Z},~u_1,\dots,u_{m-1},v_1,\dots,v_m \in \mathbb{Z}\setminus\{0\},
\end{equation}
and $t^{v_1},\dots,t^{v_m} \in \langle t \rangle\setminus\{1\}$ are treated as single letters from the alphabet $\{a\}^{\pm 1} \cup \langle t \rangle$. We will call the number $m$ the \emph{syllable length of $W$} and will denote it $\sl(W)$. Observe that  $\sl(W) \leq \|W\|_{\{a\} \cup \langle t \rangle}$ for any freely reduced word $W$.

\begin{defn}[Reduction of the first type]\label{def:red-first}
Suppose that $W$ is a word of the form \eqref{eq:reduced_W}. If for some $i \in \{1,\dots,m-1\}$, we have $v_i>0$, $v_{i+1}<0$ and $u_i \in k\Z\setminus\{0\}$ then we can perform a {reduction of the first type} on $W$ as follows.

Set $s=u_i/k \in \Z$ and observe that, by applying a defining relation from presentation \eqref{eq:BS-rel_pres} $|s|$ times, we get
\[ta^{u_i} t^{-1}\eqG \left(ta^kt^{-1}\right)^s \eqG a^{l s} \text{ in } G. \]
Therefore $W$ equals in $G$ to the word
\begin{equation}\label{eq:W'-first_type}
W'=a^{u_0} t^{v_1} a^{u_1} \dots a^{u_{i-1}} t^{v_i-1} a^{ls} t^{v_{i+1}+1} a^{u_{i+1}}   \dots t^{v_m} a^{u_m}.
\end{equation}
We will say that $W'$ has been obtained from $W$ by applying a \emph{reduction of the first type at place $i$}, writing $W \redone W'$.
\end{defn}

We can similarly define basic reductions of the second type.

\begin{defn}[Reduction of the second type]
Suppose that $W$ is a word of the form \eqref{eq:reduced_W}. If for some $i \in \{1,\dots,m-1\}$, we have $v_i<0$, $v_{i+1}>0$ and $u_i \in l\Z\setminus\{0\}$ then we can perform a {reduction of the second type} on $W$ as follows.

Set $s=u_i/l \in \Z$ and observe that, by applying a defining relation from  presentation \eqref{eq:BS-rel_pres}  $|s|$ times, we get
\[t^{-1}a^{u_i} t \eqG \left(t^{-1} a^l t\right)^s \eqG a^{k s} \text{ in } G. \]
Therefore $W$ equals in $G$ to the word
\begin{equation}\label{eq:W'-second_type}
W'=a^{u_0} t^{v_1} a^{u_1} \dots a^{u_{i-1}} t^{v_i+1} a^{ls} t^{v_{i+1}-1} a^{u_{i+1}}   \dots t^{v_m} a^{u_m}.
\end{equation}
We will say that $W'$ has been obtained from $W$ by applying a \emph{reduction of the second type at place $i$}, writing $W \redtwo W'$.
\end{defn}

When the type of the reduction does not matter we will simply write $W \red W'$.
Note that after applying a reduction (of any type) to a freely reduced word $W$  the resulting word $W'$ satisfies $\sl(W') \leq \sl(W)$. Moreover, if $\sl(W')=\sl(W)$ then the word $W'$
(from \eqref{eq:W'-first_type} or \eqref{eq:W'-second_type})  is again freely reduced in the above sense.

\begin{defn}[Trimming chain] Let $W$ be a freely reduced word over the alphabet $\{a\}^{\pm 1} \cup \langle t \rangle$ and $i \in \{1,\dots,\sl(W)-1\}$. For any $\ell \in \N$, a \emph{trimming chain of the first type at place $i$ of length $\ell$}
is a sequence of reductions
\[W=W_0 \redone W_1\redone \dots \redone W_{\ell-1} \redone W_\ell, \]
where $\sl(W)=\sl(W_1)=\dots=\sl(W_{\ell-1})$ and $\sl(W_\ell)<\sl(W)$.

A \emph{trimming chain of the second type at place $i$ of length $\ell$},
\[W=W_0 \redtwo W_1\redtwo \dots \redtwo W_{\ell-1} \redtwo W_\ell, \]
is defined similarly.
\end{defn}

\subsection{Technical lemmas}\label{subsec:tech_lemmas}
From now on we assume that $k\nmid l$ and $l \nmid k$. In this case we can choose some primes $p,q \in \N$ such that $\nu_p(k)>\nu_p(l)$ and $\nu_q(l)>\nu_q(k)$.

\begin{lemma} \label{lem:trim_chain_length} Let $W$ be the word given by \eqref{eq:reduced_W} with $\sl(W)=m>0$.
If $W$ represents the trivial element of $G$ then there is $i \in \{1,\dots, m-1\}$ such that either
$W$ admits a trimming chain of the first type at place $i$ of length at most $ \nu_p(u_i)$ or it admits a trimming chain of the second type at place $i$ of length at most $\nu_q(u_i)$.
\end{lemma}

\begin{proof}
We will prove the statement by induction on the total number of $t$'s occurring in $W$, i.e., on the number $v(W)=\sum_{r=1}^m |v_r|$.

Since $W \eqG 1$, by Britton's lemma (\cite[Section IV.2]{L-S}), the number $v(W)$ must be at least $2$, and if $v(W)=2$ then $\sl(W)=m=2$ and either $v_1=1$, $v_2=-1$ and
$u_1 \in k\Z \setminus\{0\}$ (i.e., $W$ admits a reduction of the first type) or $v_1=-1$, $v_2=1$ and $u_1 \in l\Z\setminus\{0\}$ (i.e., $W$ admits a reduction of the second type).
Without loss of generality, let us assume that we are in the former case. Applying a reduction of the first type to $W$ we obtain a word $W'$
with $\sl(W')=0<\sl(W)$, so $W \redone W'$ is a trimming chain of the first type at place $1$ of length $1$. Moreover, $1 \le \nu_p(u_1) $, as $p \mid k \mid u_1$, so the base of induction has been established.

Suppose now that $v(W)>2$. By Britton's lemma, $W$ admits a reduction (of some type) at some place $i \in \{1,\dots,\sl(W)-1\}$, and again, without loss of generality, we assume that it is  a reduction of the first type (the other case is similar). Let $W'$ be the word \eqref{eq:W'-first_type} resulting in this reduction.

If $\sl(W')<\sl(W)$ then $W \redone W'$ is a trimming chain of the first type of length $1 \le \nu_p(u_i)$, as required. So we can further assume that $\sl(W')=\sl(W)=m$, whence $W'$ is again freely reduced and $v(W')=v(W)-2<v(W)$. By induction, $W'$ must admit a trimming chain
(of one of the two types) at some place $j \in \{1,\dots,m-1\}$ of length $\ell \in \N$. If $j \neq i$ then we can perform the same trimming chain on $W$ since $u_j$ is not affected by the original reduction $W \redone W'$ and $\sl(W')=\sl(W)$. The desired inequality on $\ell$ will then follow by induction.

Now let us suppose that $j=i$. Since $\sl(W')=\sl(W)$, the trimming chain at  place $i$ for $W'$ must have the same type as the original reduction from $W$ to $W'$, thus we have a trimming chain
\[W'=W_0 \redone W_1\redone \dots \redone W_{\ell-1} \redone W_\ell. \] By precomposing this trimming chain with the original reduction $W \redone W'$, we obtain a trimming chain of the first type at place $i$ of length $\ell+1$ for $W$. By induction and the construction of $W'$ (see \eqref{eq:W'-first_type}), we have $\ell \le \nu_p(ls)$, where $s=u_i/k \in \Z\setminus\{0\}$.
Since $\nu_p(k) \ge \nu_p(l)+1$, we can conclude that
\[\ell+1\le \nu_p(ls)+1=\nu_p(l)+\nu_p(s)+1 \le \nu_p(k)+\nu_p(s)=\nu_p(ks)=\nu_p(u_i).\]

Thus we have established the step of induction, and so the statement is proved.
\end{proof}

Denote
\begin{equation}\label{eq:def_of_alpha}
\alpha=\max\{|l/k|,|k/l|\}>1.
\end{equation}

\begin{lemma}\label{lem:red_seq} Let $W$ be a word of the form \eqref{eq:reduced_W}, representing the trivial element of $G$. Suppose that
\begin{equation}\label{eq:red_seq}
W=W_0 \red W_1\red \dots \red W_{\ell-1} \red W_\ell
\end{equation}
is a sequence of reductions (of the same type) at place $i \in \{1,\dots,\sl(W)-1\}$, where $\ell \in \N$ and $\sl(W)=\sl(W_1)=\dots=\sl(W_{\ell-1})$.
 Denote $n=\|W\|_{\{a\} \cup \langle t \rangle} \in \N$, then
\begin{equation}\label{eq:ineq_for_W_ell}
\|W_\ell\|_{\{a\} \cup \langle t \rangle} \le \alpha^\ell \,n~ \text{ and }~ \area(W) \le \area(W_\ell)+\frac{\alpha^{\ell}-1}{\alpha-1}\,n .
\end{equation}
\end{lemma}

\begin{proof} Without loss of generality we will assume that all of the reductions in the sequence \eqref{eq:red_seq} are of the first type. We will argue by induction on $\ell$.

Suppose, first, that $\ell=1$ and $W_1=a^{u_0} t^{v_1} a^{u_1} \dots a^{u_{i-1}} t^{v_i-1} a^{ls} t^{v_{i+1}+1} a^{u_{i+1}}   \dots t^{v_m} a^{u_m}$, where $s=u_i/k$.
Then
\[n=\|W\|_{\{a\} \cup \langle t \rangle} = m+\sum_{r=0}^m |u_r| ~\text{ and }~ \|W_1\|_{\{a\} \cup \langle t \rangle} \le m+\sum_{r=0,r \neq i}^m |u_r|+|ls|.\]

Since $|ls|/|u_i|=|l/k| \le \alpha$ and $\alpha >1$, we see that $\|W_1\|_{\{a\} \cup \langle t \rangle} \le \alpha\,n$. The word $W_1$ can be obtained from the word $W$ by applying a defining relation from the presentation \eqref{eq:BS-rel_pres} $|s|$ times, so, since $|s| \le |u_i| \le n$, we have
\[\area(W) \le \area(W_1)+|s| \le \area(W_1)+n.\]

Now assume that $\ell \ge 2 $ and set $n_1=\|W_1\|_{\{a\} \cup \langle t \rangle}$. By induction, we know that
\begin{equation}\label{eq:ineq_for_W_ell_from_n_1}
\|W_\ell\|_{\{a\} \cup \langle t \rangle} \le \alpha^{\ell-1} \,n_1~ \text{ and }~ \area(W_1) \le \area(W_\ell)+\frac{\alpha^{\ell-1}-1}{\alpha-1}\,n_1 .
\end{equation}
We have shown above that $n_1 \le \alpha\, n$ and $\area(W) \le \area(W_1)+n$. Combining this with inequalities \eqref{eq:ineq_for_W_ell_from_n_1}, we obtain \eqref{eq:ineq_for_W_ell}.
\end{proof}

\subsection{Proof of the sufficiency in Theorem~\ref{thm:BS_rel_DF-character}}
\begin{thm}\label{thm:BS_suff}
Let $G$ be the Baumslag-Solitar group $BS(k,l)$, for some $k,l \in \mathbb{Z}\setminus\{0\}$. If
neither of $k,l$ divides the other one then the relative Dehn function $\Delta_{G,\langle t \rangle}$ is well-defined.
\end{thm}

\begin{proof} Choose primes $p,q \in \N$ as in the beginning of Subsection~\ref{subsec:tech_lemmas} and let $\alpha>1$ be defined by \eqref{eq:def_of_alpha}.

To prove that $\Delta_{G,\langle t \rangle}$ is well-defined it is sufficient to show that there is a function $h :\N_0 \times \N_0\to \N_0$  such that
for all $m,n \in \N_0$ if $W$ is a freely reduced word over the alphabet $\{a\}^{\pm 1} \cup \langle t \rangle$, representing the trivial element of $G$ and satisfying
$\sl(W) = m$ and $\|W\|_{\{a\} \cup \langle t \rangle}= n$, then
\[\area(W)\leq h(m,n).\]
(Since $\sl(W)  \leq \|W\|_{\{a\} \cup \langle t \rangle}$, the function $f:\N_0 \to \N_0$, $f(n)=\max\{h(m',n') \mid 0 \le m',n' \le n\}$ will serve as a
relative isoperimetric function of $G$ with respect to $\langle t \rangle$.)

The proof will use induction on $m$. By Britton's lemma, a freely reduced word $W$ of syllable length at most $1$ cannot represent the trivial element of $G$, hence we can define $h(0,n)=h(1,n)=0$, for all $n \in \N_0$.

Now suppose that $m \ge 2$ and the values of the desired function $h(s,n)$ have been found for all $s \in \{0,\dots,m-1\}$ and all $n \in \N_0$. Take any $n \in \N_0$.
If there are no freely reduced words $W$ such that  $\sl(W)=m$, $\|W\|_{\{a\} \cup \langle t \rangle}=n$ and $W  \eqG 1$ in $G$ then we set $h(m,n)=0$. Otherwise, let $W$ be such a word (in particular, $n \ge m \ge 2)$.

If $W$ is given by \eqref{eq:reduced_W} then, according to Lemma~\ref{lem:trim_chain_length}, $W$ admits a trimming chain
\[W=W_0 \red W_1\red \dots \red W_{\ell-1} \red W_\ell\]
at some place $i \in \{1,\dots,m-1\}$ of length $\ell \in \N$, where $\ell \le \max\{\nu_p(u_i),\nu_q(u_i)\}$.
Since $|u_i| \le n$, we see that $\ell \le \max\{\log_p(n),\log_q(n)\}=\log_r(n)$, where $r=\min\{p,q\}$.

Define $\beta=\log_r(\alpha)+1$ and observe that
\[\alpha^\ell \,n \leq \alpha^{\log_r(n)}\,n= r^{\log_r(\alpha) \log_r(n)}\,n  =n^{\log_r(\alpha)+1}=n^\beta,~\text { and}\]
\[\frac{\alpha^{\ell}-1}{\alpha-1} n \le \frac{1}{\alpha-1} \alpha^{\ell}\,n \le \frac{1}{\alpha-1} n^\beta.\]
Thus inequalities \eqref{eq:ineq_for_W_ell}, given by Lemma~\ref{lem:red_seq}, imply that
\begin{equation}\label{eq:W_ell_bounds_in_n}
\|W_\ell\|_{\{a\} \cup \langle t \rangle} \le n^\beta ~ \text{ and }~ \area(W) \le \area(W_\ell)+\frac{1}{\alpha-1} n^\beta .
\end{equation}

Since $m'=\sl(W_\ell)<\sl(W)$, by induction we have $\area(W_\ell) \le h(m',n')$, where $n'=\|W_\ell\|_{\{a\} \cup \langle t \rangle}$. In view of \eqref{eq:W_ell_bounds_in_n}, after defining
\[h(m,n)=\max\left\{ h(m',n')+\left\lfloor \frac{1}{\alpha-1} n^\beta \right\rfloor \,\middle |\, 0 \le m' \le m-1,~0 \le n' \le n^\beta \right \} \in \N_0,\]
we shall have $\area(W) \le h(m,n)$.

Thus we have found the required function $h:\N_0\times \N_0 \to \N_0$, so the proof is complete.
\end{proof}

\begin{remark} The argument from the proof of Theorem~\ref{thm:BS_suff} gives a double exponential upper bound for $\Delta_{G,\langle t \rangle}$:
\[\Delta_{G,\langle t \rangle}(n) \preceq n^{\beta^n}, ~\text{ for all } n \in \N_0,\]
where $\beta>1$ is the constant from that proof.
\end{remark}

\AtNextBibliography{\small}
\printbibliography

@article {BDM09,
    AUTHOR = {Behrstock, Jason and Dru\c{t}u, Cornelia and Mosher, Lee},
     TITLE = {Thick metric spaces, relative hyperbolicity, and
              quasi-isometric rigidity},
   JOURNAL = {Math. Ann.},
  FJOURNAL = {Mathematische Annalen},
    VOLUME = {344},
      YEAR = {2009},
    NUMBER = {3},
     PAGES = {543--595},
      ISSN = {0025-5831},
   MRCLASS = {20F67 (57M07 57M50)},
  MRNUMBER = {2501302},
       DOI = {10.1007/s00208-008-0317-1},}

@article {Bo12,
    AUTHOR = {Bowditch, B. H.},
     TITLE = {Relatively hyperbolic groups},
   JOURNAL = {Internat. J. Algebra Comput.},
  FJOURNAL = {International Journal of Algebra and Computation},
    VOLUME = {22},
      YEAR = {2012},
    NUMBER = {3},
     PAGES = {1250016, 66},
      ISSN = {0218-1967},
   MRCLASS = {20F67 (20F65)},
  MRNUMBER = {2922380},
MRREVIEWER = {R\'{e}mi Bernard Coulon},
       DOI = {10.1142/S0218196712500166},}

@book {BridsonHaefligerBook,
    AUTHOR = {Bridson, Martin R. and Haefliger, Andr\'{e}},
     TITLE = {Metric spaces of non-positive curvature},
    SERIES = {Grundlehren der Mathematischen Wissenschaften [Fundamental
              Principles of Mathematical Sciences]},
    VOLUME = {319},
 PUBLISHER = {Springer-Verlag, Berlin},
      YEAR = {1999},
     PAGES = {xxii+643},
      ISBN = {3-540-64324-9},
   MRCLASS = {53C23 (20F65 53C70 57M07)},
  MRNUMBER = {1744486},
MRREVIEWER = {Athanase Papadopoulos},
       DOI = {10.1007/978-3-662-12494-9},}

@misc{BuHr21,
      title={Cusped spaces and quasi-isometries of relatively hyperbolic groups}, 
      author={Brendan Burns Healy and G. Christopher Hruska},
      year={2020},
      eprint={2010.09876},
      archivePrefix={arXiv},
      primaryClass={math.GR}
}

@article {DGOsin2017,
    AUTHOR = {Dahmani, F. and Guirardel, V. and Osin, D.},
     TITLE = {Hyperbolically embedded subgroups and rotating families in
              groups acting on hyperbolic spaces},
   JOURNAL = {Mem. Amer. Math. Soc.},
  FJOURNAL = {Memoirs of the American Mathematical Society},
    VOLUME = {245},
      YEAR = {2017},
    NUMBER = {1156},
     PAGES = {v+152},
      ISSN = {0065-9266},
      
   MRCLASS = {20F65 (20F06 20F34 20F67 57M07)},
  MRNUMBER = {3589159},
MRREVIEWER = {Dominik Gruber},
       DOI = {10.1090/memo/1156},}

@article {farb,
    AUTHOR = {Farb, B.},
     TITLE = {Relatively hyperbolic groups},
   JOURNAL = {Geom. Funct. Anal.},
  FJOURNAL = {Geometric and Functional Analysis},
    VOLUME = {8},
      YEAR = {1998},
    NUMBER = {5},
     PAGES = {810--840},
      ISSN = {1016-443X},
   MRCLASS = {20F32 (20F10 57M07)},
  MRNUMBER = {1650094},
MRREVIEWER = {Michel Coornaert},
       DOI = {10.1007/s000390050075},}

@misc{G19,
      title={Quasi-isometrically rigid subgroups in right-angled Coxeter groups}, 
      author={Anthony Genevois},
      year={2019},
      eprint={1909.04318},
      archivePrefix={arXiv},
      primaryClass={math.GR}
}

@article {GM08,
    AUTHOR = {Groves, Daniel and Manning, Jason Fox},
     TITLE = {Dehn filling in relatively hyperbolic groups},
   JOURNAL = {Israel J. Math.},
  FJOURNAL = {Israel Journal of Mathematics},
    VOLUME = {168},
      YEAR = {2008},
     PAGES = {317--429},
      ISSN = {0021-2172},
   MRCLASS = {57M50},
  MRNUMBER = {2448064},
MRREVIEWER = {Colin C. Adams},
       DOI = {10.1007/s11856-008-1070-6},}

@misc{HaHr19,
      title={On Canonical Splittings of Relatively Hyperbolic Groups}, 
      author={Matthew Haulmark and G. Christopher Hruska},
      year={2019},
      eprint={1912.00886},
      archivePrefix={arXiv},
      primaryClass={math.GR}
}

@article {HK10,
    AUTHOR = {Hruska, G. Christopher},
     TITLE = {Relative hyperbolicity and relative quasiconvexity for
              countable groups},
   JOURNAL = {Algebr. Geom. Topol.},
  FJOURNAL = {Algebraic \& Geometric Topology},
    VOLUME = {10},
      YEAR = {2010},
    NUMBER = {3},
     PAGES = {1807--1856},
      ISSN = {1472-2747},
   MRCLASS = {20F65 (20F67)},
  MRNUMBER = {2684983},
MRREVIEWER = {Eduardo Mart\'{i}nez-Pedroza},
       DOI = {10.2140/agt.2010.10.1807},}

@misc{HMS2021,
      title={A survey on quasi-isometries of pairs: invariants and rigidity}, 
      author={Sam Hughes and Eduardo Martínez-Pedroza and Luis Jorge Sánchez Saldaña},
      year={2021},
      eprint={2112.15046},
      archivePrefix={arXiv},
      primaryClass={math.GR}
}

@book {L-S,
    AUTHOR = {Lyndon, Roger C. and Schupp, Paul E.},
     TITLE = {Combinatorial group theory},
    SERIES = {Ergebnisse der Mathematik und ihrer Grenzgebiete, Band 89},
 PUBLISHER = {Springer-Verlag, Berlin-New York},
      YEAR = {1977},
     PAGES = {xiv+339},
      ISBN = {3-540-07642-5},
   MRCLASS = {20F05 (55A05)},
  MRNUMBER = {0577064},
MRREVIEWER = {Ian M. Chiswell},
}

@article {MP07,
    AUTHOR = {Mart\'{i}nez-Pedroza, Eduardo},
     TITLE = {Combination of quasiconvex subgroups of relatively hyperbolic
              groups},
   JOURNAL = {Groups Geom. Dyn.},
  FJOURNAL = {Groups, Geometry, and Dynamics},
    VOLUME = {3},
      YEAR = {2009},
    NUMBER = {2},
     PAGES = {317--342},
      ISSN = {1661-7207},
   MRCLASS = {20F67 (20F65)},
  MRNUMBER = {2486802},
MRREVIEWER = {Goulnara N. Arzhantseva},
       DOI = {10.4171/GGD/59},}

@article {MP16,
    AUTHOR = {Mart\'{i}nez-Pedroza, Eduardo},
     TITLE = {A note on fine graphs and homological isoperimetric
              inequalities},
   JOURNAL = {Canad. Math. Bull.},
  FJOURNAL = {Canadian Mathematical Bulletin. Bulletin Canadien de
              Math\'{e}matiques},
    VOLUME = {59},
      YEAR = {2016},
    NUMBER = {1},
     PAGES = {170--181},
      ISSN = {0008-4395},
   MRCLASS = {20F67 (05C10 20J05 55U15 57M20 57M60)},
  MRNUMBER = {3451908},
MRREVIEWER = {Stefan Witzel},
       DOI = {10.4153/CMB-2015-070-2},}

@article{MPR2021,
      title={A Note on Hyperbolically Embedded Subgroups}, 
      author={Eduardo Mart\'nez-Pedroza and Farhan Rashid},
      year={2021},
   JOURNAL = {Comm. Algebra},
  FJOURNAL = {Communications in Algebra},
    VOLUME = {},
      YEAR = {To appear},
    NUMBER = {},
     PAGES = {},
      ISSN = {}
}

@article{MaSa21,
      title={Quasi-isometric rigidity of subgroups and Filtered ends}, 
      author={Eduardo Martínez-Pedroza and Luis Jorge Sánchez Saldaña},
   JOURNAL = {Algebr. Geom. Topol.},
  FJOURNAL = {Algebraic \& Geometric Topology},
    VOLUME = {},
      YEAR = {To appear},
    NUMBER = {},
     PAGES = {},
      ISSN = {}
}

@article {MSW11,
    AUTHOR = {Mosher, Lee and Sageev, Michah and Whyte, Kevin},
     TITLE = {Quasi-actions on trees {II}: {F}inite depth {B}ass-{S}erre
              trees},
   JOURNAL = {Mem. Amer. Math. Soc.},
  FJOURNAL = {Memoirs of the American Mathematical Society},
    VOLUME = {214},
      YEAR = {2011},
    NUMBER = {1008},
     PAGES = {vi+105},
      ISSN = {0065-9266},
      ISBN = {978-0-8218-4712-1},
   MRCLASS = {20F65 (05C25 20E08)},
  MRNUMBER = {2867450},
MRREVIEWER = {Gilbert Levitt},
       DOI = {10.1090/S0065-9266-2011-00585-X},}

@article {Osin06,
    AUTHOR = {Osin, Denis V.},
     TITLE = {Relatively hyperbolic groups: intrinsic geometry, algebraic
              properties, and algorithmic problems},
   JOURNAL = {Mem. Amer. Math. Soc.},
  FJOURNAL = {Memoirs of the American Mathematical Society},
    VOLUME = {179},
      YEAR = {2006},
    NUMBER = {843},
     PAGES = {vi+100},
      ISSN = {0065-9266},
   MRCLASS = {20F67},
  MRNUMBER = {2182268},
MRREVIEWER = {Ilya Kapovich},
       DOI = {10.1090/memo/0843},}

@article {Osin16,
    AUTHOR = {Osin, D.},
     TITLE = {Acylindrically hyperbolic groups},
   JOURNAL = {Trans. Amer. Math. Soc.},
  FJOURNAL = {Transactions of the American Mathematical Society},
    VOLUME = {368},
      YEAR = {2016},
    NUMBER = {2},
     PAGES = {851--888},
      ISSN = {0002-9947},
   MRCLASS = {20F67 (20F65)},
  MRNUMBER = {3430352},
MRREVIEWER = {Alessandro Sisto},
       DOI = {10.1090/tran/6343},
       URL = {https://doi.org/10.1090/tran/6343},
}

@book {SerreTrees,
    AUTHOR = {Serre, Jean-Pierre},
     TITLE = {Trees},
    SERIES = {Springer Monographs in Mathematics},
      NOTE = {Translated from the French original by John Stillwell,
              Corrected 2nd printing of the 1980 English translation},
 PUBLISHER = {Springer-Verlag, Berlin},
      YEAR = {2003},
     PAGES = {x+142},
      ISBN = {3-540-44237-5},
   MRCLASS = {20E08 (05C05 20E06 20G25)},
  MRNUMBER = {1954121},
}

@article {McW02,
    AUTHOR = {McCammond, Jonathan P. and Wise, Daniel T.},
     TITLE = {Fans and ladders in small cancellation theory},
   JOURNAL = {Proc. London Math. Soc. (3)},
  FJOURNAL = {Proceedings of the London Mathematical Society. Third Series},
    VOLUME = {84},
      YEAR = {2002},
    NUMBER = {3},
     PAGES = {599--644},
      ISSN = {0024-6115},
   MRCLASS = {20F06},
  MRNUMBER = {1888425},
MRREVIEWER = {Vladimir N. Bezverkhni\u{\i}},
       DOI = {10.1112/S0024611502013424},}

@ARTICLE{GT21,
       author = {{Genevois}, Anthony and {Tessera}, Romain},
        title = "{Asymptotic geometry of lamplighters over one-ended groups}",
      journal = {arXiv e-prints},
     keywords = {Mathematics - Group Theory, Mathematics - Geometric Topology, Mathematics - Metric Geometry, 20F65, 20F69, 20F67},
         year = 2021,
        month = may,
          eid = {arXiv:2105.04878},
        pages = {arXiv:2105.04878},
archivePrefix = {arXiv},
       eprint = {2105.04878},
 primaryClass = {math.GR},
       adsurl = {https://ui.adsabs.harvard.edu/abs/2021arXiv210504878G},
      adsnote = {Provided by the SAO/NASA Astrophysics Data System}
}

@article {GubaSapir,
    AUTHOR = {Guba, V. S. and Sapir, M. V.},
     TITLE = {On {D}ehn functions of free products of groups},
   JOURNAL = {Proc. Amer. Math. Soc.},
  FJOURNAL = {Proceedings of the American Mathematical Society},
    VOLUME = {127},
      YEAR = {1999},
    NUMBER = {7},
     PAGES = {1885--1891},
      ISSN = {0002-9939},
   MRCLASS = {20F32},
  MRNUMBER = {1469408},
MRREVIEWER = {Stephen G. Brick},
       DOI = {10.1090/S0002-9939-99-04579-7},
       URL = {https://doi-org.qe2a-proxy.mun.ca/10.1090/S0002-9939-99-04579-7},
}

\end{document}